\numberwithin{equation}{section}
\newcommand{\HH}{\mathcal{H}}
\newcommand{\HK}{\mathcal{K}}
\newcommand{\HE}{\mathcal{E}}
\newcommand{\HF}{\mathcal{F}}
\newcommand{\HD}{\mathcal{D}}
\newcommand{\HB}{\mathcal{B}}
\newcommand{\dB}{\partial \mathbb B_d}
\newcommand{\D}{\mathbb{D}}
\newcommand{\C}{\mathbb{C}}
\newcommand{\N}{\mathbb{N}}
\newcommand{\R}{\mathbb{R}}
\newcommand{\la}{\langle}
\newcommand{\ra}{\rangle}
\newcommand{\Bd}{\mathbb B_d}
\newcommand{\Hol}{\operatorname{Hol}}
\def\HE{{\mathcal E}}
\def\HF{{\mathcal F}}
\def\om{\omega}
\def\vhat{\hat{v}}
\newcommand{\Ker}[1]{\mathsf{Ker}~}
\theoremstyle{plain}
\newtheorem{theorem}{Theorem}[section]
\newtheorem{lemma}[theorem]{Lemma}
\newtheorem{prop}[theorem]{Proposition}
\newtheorem{corollary}[theorem]{Corollary}
\newtheorem{definition}[theorem]{Definition}
\theoremstyle{definition}
\newtheorem{example}[theorem]{Example}
\newcommand{\Mult}{\operatorname{Mult}}
\newcommand\cH{{\mathcal H}}
\newcommand\cK{{\mathcal K}}
\newcommand\cT{{\mathcal T}}
\newcommand\cM{{\mathcal M}}
\newcommand\cB{{\mathcal B}}
\newcommand{\bB}{\mathbb B}
\newcommand{\bC}{\mathbb C}
\newcommand{\bD}{\mathbb D}
\newcommand{\bN}{\mathbb N}
\newcommand{\bT}{\mathbb T}
\newcommand{\ol}[1]{\overline{#1}}
\newcommand{\diag}{\operatorname{diag}}
\newcommand{\cE}{{\mathcal E}}
\newcommand{\cF}{{\mathcal F}}
\begin{document}

\bibliographystyle{plain}

\thanks{}

\date{\today}

\title[Radially weighted Besov spaces and the Pick property]{Radially weighted Besov spaces and the Pick property}
\author[A. Aleman]{Alexandru Aleman}
\address{Lund University, Mathematics, Faculty of Science, P.O. Box 118, S-221 00 Lund, Sweden}
\email{alexandru.aleman@math.lu.se}

\author[M. Hartz]{Michael Hartz}
\address{Department of Mathematics, Washington University in St. Louis, One Brookings Drive,
St. Louis, MO 63130, USA}
\email{mphartz@wustl.edu}
\thanks{M.H. was partially supported by a Feodor Lynen Fellowship}

\author[J. M\raise.5ex\hbox{c}Carthy]{John E. M\raise.5ex\hbox{c}Carthy}
\address{Department of Mathematics, Washington University in St. Louis, One Brookings Drive,
St. Louis, MO 63130, USA}
\email{mccarthy@wustl.edu}
\thanks{J.M. was partially supported by National Science Foundation Grant DMS 1565243}

\author[S. Richter]{Stefan Richter}
\address{Department of Mathematics, University of Tennessee, 1403 Circle Drive, Knoxville, TN 37996-1320, USA}
\email{srichter@utk.edu}

\keywords{ Complete Pick space, Besov space, multiplier}
\subjclass[2010]{Primary 46E22; Secondary 30H15, 30H25}

\begin{abstract} For $s\in \R$ the weighted Besov space on the unit ball $\Bd$ of $\C^d$ is defined by
$$B^s_\om=\{f\in \Hol(\Bd): \int_{\Bd}|R^sf|^2 \om dV<\infty\}.$$
Here $R^s$ is a power of the radial derivative operator $R= \sum_{i=1}^d z_i\frac{\partial}{\partial z_i}$,
 $V$ denotes Lebesgue measure, and $\om$ is a radial
 weight function not supported on any ball of radius $< 1$.

 Our results imply that for all  such weights $\om$ and $\nu$,
 every bounded column multiplication operator $B^s_\om \to B^t_\nu \otimes \ell^2$ induces a bounded row multiplier $B^s_\om\otimes \ell^2 \to B^t_\nu$.
Furthermore we show that  if a weight $\om$ satisfies that for some $\alpha >-1$ the ratio $\om(z)/(1-|z|^2)^\alpha$ is nondecreasing for $t_0<|z|<1$, then
 $B^s_\om$ is a complete Pick space, whenever $s\ge (\alpha+d)/2$.
\end{abstract}

\maketitle


\section{Introduction}
Let $d\in \mathbb N$. In this paper we will address certain questions about functions and multipliers in  weighted Besov Hilbert spaces of analytic functions in the unit ball $\Bd=\{z\in \C^d: |z|<1\}$. In particular, we will show that results about multipliers in standard and  Bekoll\'{e} weighted Besov spaces of \cite{OrtFab} and \cite{CasFab_Bekolleweights} extend to hold for all  radial weights, and we will provide simple, but general conditions on radial weight functions $\om$ that imply that all  results  of \cite{AlHaMcCRiWeakProd} can be applied to such a weighted Besov space.

We will use $V$ to denote Lebesgue measure on $\C^d$ restricted to $\Bd$, normalized so that $V(\Bd)=1$.
 A non-negative integrable function $\om$ on $\Bd$ is called a radial weight, if for each $0<r<1$ the value $\om(rz)$ is independent of $z \in \dB$
 and the non-degeneracy condition
 \begin{equation}
 \label{eqjm1}
 \int_{|z|>r} \om dV>0 \quad {\rm for\ each\ } 0<r<1
 \end{equation}
 holds.
 It  is easily checked that for radial weights the weighted Bergman space $L^2_a(\om)=L^2(\om dV)\cap \Hol(\Bd)$ is closed in $L^2(\om dV)$, and that point evaluations $f\to f(z)$ are bounded on $L^2_a(\om)$ for each $z\in \Bd$.

We now fix a radial weight $\om$. Then we have $$\|f\|^2_{L^2_a(\om)}=\int_{\Bd}|f|^2 \om dV= \sum_{n\ge 0} \|f_n\|^2_{L^2_a(\om)},$$ where $f=\sum_{n\ge 0} f_n$ is the decomposition of the analytic function $f$ into a sum of homogeneous polynomials $f_n$ of degree $n$. We associate a one-parameter family of weighted Besov spaces $\{B^s_\om\}_{s\in \R}$ with $\om$ as follows:
 \begin{equation}\label{BesovDefinition} \|f\|^2_{B^s_\om}=\|\om\|_{L^1(V)}|f(0)|^2 + \sum_{n=1}^\infty n^{2s}\|f_n\|^2_{L^2_a(\om)}\end{equation}
 $$ B^s_\om=\{f\in \Hol(\Bd): \|f\|^2_{B^s_\om}<\infty\}.$$

Let $R=\sum_{i=1}^d z_i\frac{\partial}{\partial z_i}$ denote the radial derivative operator, then $Rf= \sum_{n\ge 1} nf_n$. More generally, for each nonzero $s\in \R$ we may consider the "fractional" transformation $R^s:\sum_{n\ge 0}  f_n \to \sum_{n\ge 1} n^s f_n$. It is thus clear that
\begin{align*}B^s_\om&=\{f\in \Hol(\Bd): R^s f \in L^2_a(\om)\},\\ \|f\|^2_{B^s_\om}&=\|\om\|_{L^1(V)}|f(0)|^2 + \int_{\Bd}|R^s f|^2\om dV.\end{align*}
 One checks that \eqref{eqjm1} implies that each $B^s_\om$ is a Hilbert space, and point evaluations for all points in $\Bd$ are bounded. A space $\HH$ of analytic functions that occurs as one of the spaces $B^s_\om$ for a radial weight $\om$ and some $s\in \R$ will be called a weighted Besov space.

If $\om(z)=1$, $s\in \R$, and $f\in \Hol(\Bd)$,  then $f\in B^s_\om$ if and only if $R^sf\in L^2_a$, the unweighted Bergman space. Thus, in this case the collection $B^s_\om$ consists of standard weighted Bergman or Besov spaces. We have $B^{d/2}_{{\mathbf 1}}=H^2_d$, the Drury-Arveson space, $B^{1/2}_{{\mathbf 1}}=H^2(\dB)$, the Hardy space of the Ball, and for $s<1/2$ we obtain the weighted Bergman spaces $B^s_{\mathbf 1}= L^2_a((1-|z|^2)^{-2s}dV)$, where all equalities are understood to mean equality of spaces with equivalence of norms. These spaces have been extensively studied in the literature. We refer the reader to \cite{ZhaoZhu}, where the $L^p$-analogues of these spaces were considered as well. If $d=1$ and $s=1$, then $B^1_{\mathbf 1}=D$, the classical Dirichlet space of the unit disc. More generally, if $d=1$ and $s>1/2$, then these spaces are have been referred to as Dirichlet-type spaces, see \cite{BrownShields}.

If $\om(z)=(1-|z|^2)^{\alpha}$ for some $\alpha >-1$, then $\om$ is called a standard weight, and we obtain the same spaces as for $\om_0=1$, but with a shift in indices: $B^s_{\om}= B^{s-\frac{\alpha}{2}}_{\mathbf 1}$. This can be verified by using polar coordinates  and the asymptotics $\int_0^1 t^n(1-t)^\alpha dt = \frac{\Gamma(n+1)\Gamma(\alpha+1)}{\Gamma(n+\alpha+2)}\approx n^{-\alpha-1}$, which follows e.g. from Stirling's formula. We refer the reader to Section \ref{section_weights_basics}  of the current paper for more detail on further calculations of this type.

Observe that for standard weights $\om$ the spaces $B^s_\om$ are weighted Bergman spaces for all $s\le 0$. More generally, the following will be Theorem \ref{omxTheorem}.

\begin{theorem}\label{IntroomxTheorem} Let $\om$ be a radial weight, let  $s> 0$, and  for $ z\in \Bd$ define $$\om_s(z)=
  \frac{1}{d} |z|^{2 - 2 d} \int_{ |w|\ge |z|} \frac{(|w|^2-|z|^2)^{2s-1}}{\Gamma(2s)} \ \om(w) dV(w).$$
  Then  $\om_s$ is a weight and $B^t_{\omega} = B^{t+s}_{\omega_s}$ with equivalence of norms for all $t \in \mathbb R$. In particular, $L^2_a(\om_s)=B^{-s}_\om$ with equivalence of norms.
\end{theorem}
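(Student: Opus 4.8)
The plan is to exploit radiality to reduce the whole statement to a comparison of one-dimensional moment sequences. Writing $f=\sum_{n\ge 0}f_n$ in homogeneous parts and passing to polar coordinates, for any radial weight $\nu$ one has $\|f_n\|^2_{L^2_a(\nu)}=\kappa\,\widehat\nu(n)\,\|f_n\|^2_{L^2(\dB)}$, where $\widehat\nu(x)=\int_0^1 r^{2x+2d-1}\nu(r)\,dr$ is the moment function, $\kappa$ is an absolute constant, and $\|\cdot\|_{L^2(\dB)}$ denotes the norm on the sphere; crucially the factor $\|f_n\|^2_{L^2(\dB)}$ does not depend on the weight. Consequently, term by term, the quantities $n^{2t}\|f_n\|^2_{L^2_a(\om)}$ and $n^{2(t+s)}\|f_n\|^2_{L^2_a(\om_s)}$ are comparable with constants independent of $n$ and of $f_n$ precisely when $\widehat{\om_s}(n)\approx n^{-2s}\widehat\om(n)$; summing over $n$, and treating the $n=0$ term (which is $|f(0)|^2$ times a finite positive constant) separately, then yields $B^t_\om=B^{t+s}_{\om_s}$ with equivalence of norms. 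So the entire theorem reduces to the single estimate $\widehat{\om_s}(n)\approx n^{-2s}\widehat\om(n)$.

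First I would compute $\widehat{\om_s}(n)$ explicitly. Inserting the definition of $\om_s$, the power $|z|^{2-2d}$ combines with $r^{2n+2d-1}$ to leave $r^{2n+1}$, and after interchanging the order of integration over the region $0<r<u<1$ the inner integral $\int_0^u r^{2n+1}(u^2-r^2)^{2s-1}\,dr$ becomes, via the substitution $x=r^2$, a Beta integral equal to $\tfrac12 u^{2n+4s}\,\Gamma(n+1)\Gamma(2s)/\Gamma(n+1+2s)$. The factor $\Gamma(2s)$ cancels the normalizing $1/\Gamma(2s)$, and collecting the surviving power $u^{2n+4s}$ with the measure $u^{2d-1}\om(u)\,du$ gives
\[ \widehat{\om_s}(n)=c\,\frac{\Gamma(n+1)}{\Gamma(n+1+2s)}\,\widehat\om(n+2s), \]
with $c$ an absolute constant. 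By Stirling's formula $\Gamma(n+1)/\Gamma(n+1+2s)$ is comparable to $n^{-2s}$ for all $n\ge 1$, so it remains only to establish the moment regularity $\widehat\om(n+2s)\approx\widehat\om(n)$.

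This last step is where the non-degeneracy hypothesis \eqref{eqjm1} is essential, and I expect it to be the main point of the argument. Since $\widehat\om(x)$ is decreasing in $x$ (the integrand carries $r\le 1$), one automatically has $\widehat\om(n+2s)\le\widehat\om(n)$, so only a lower bound is needed, and this reduces to showing $\widehat\om(n+1)/\widehat\om(n)\to 1$. Writing $d\mu(r)=r^{2d-1}\om(r)\,dr$, this ratio is the $\mu$-average of $r^2$ against the probability weights proportional to $r^{2n}\,d\mu$; condition \eqref{eqjm1} guarantees that $\mu$ puts positive mass in every annulus $(r',1)$, which forces these weights to concentrate at $r=1$ as $n\to\infty$ — the mass carried by $[0,r]$ is suppressed by a factor $(r/r')^{2n}$ — so the average tends to $1$. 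Iterating gives $\widehat\om(n+k)/\widehat\om(n)\to 1$ for each integer $k$, and the monotonicity of $\widehat\om$ squeezes the non-integer shift $\widehat\om(n+2s)/\widehat\om(n)$ to $1$ as well; being a convergent sequence of positive numbers, this ratio is bounded above and below, which supplies the comparability for all $n\ge 1$. Finally I would record that $\om_s$ is a genuine radial weight: it is nonnegative and radial by inspection, its $L^1$-norm is a finite positive multiple of $\widehat\om(2s)$, which is finite because $\widehat\om(2s)\le\widehat\om(0)$, and it is strictly positive on $(0,1)$ since \eqref{eqjm1} provides $\om$-mass above every radius, so $\om_s$ inherits \eqref{eqjm1} trivially. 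The stated corollary $L^2_a(\om_s)=B^{-s}_\om$ is then the case $t=-s$, using $B^0_\nu=L^2_a(\nu)$.
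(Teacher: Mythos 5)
Your proposal is correct, and while it follows the same overall reduction as the paper --- via the homogeneous expansion and \eqref{Bsom-norm}, everything comes down to the single moment asymptotic $n^{2s}a_n(\om_s)\approx a_n(\om)$ --- the way you establish that asymptotic is genuinely different. The paper (Lemma \ref{momentsByParts}) replaces the kernel $(\sigma-t)^{x-1}$ by the logarithmic kernel $\left(\log (\sigma/t)\right)^{x-1}$, for which the integer moment identity is \emph{exact}, and then transfers the result back by comparing the two kernels pointwise near $t=1$ via the soft comparison Lemma \ref{lem:weight_asymptotics}. You instead keep the original kernel and exploit the exact Beta-function identity, in your notation
\begin{equation*}
  \widehat{\om_s}(n)=\frac{\Gamma(n+1)}{\Gamma(n+1+2s)}\,\widehat{\om}(n+2s),
\end{equation*}
which, after Stirling, shifts the whole burden onto the fractional-shift regularity $\widehat\om(n+2s)\approx\widehat\om(n)$. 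Your concentration-of-mass proof that $\widehat\om(n+1)/\widehat\om(n)\to 1$ is an independent (and correct) proof of the fact the paper extracts from Lemma \ref{lem:weight_asymptotics} right after \eqref{Bsom-norm}, and the monotone squeeze $\widehat\om(n+\lceil 2s\rceil)\le\widehat\om(n+2s)\le\widehat\om(n)$ handles the non-integer shift cleanly; positivity of all terms together with convergence of the ratio to $1$ then yields the uniform two-sided bounds needed for all $n\ge 1$. Both routes use exactly the non-degeneracy \eqref{eqjm1}, as you correctly identify as the crux. What your route buys is a closed-form identity with transparent constants; what the paper's route buys is that Lemma \ref{momentsByParts} is stated for arbitrary finite Borel measures $\mu$ on $[0,1]$ and is reused in that generality later (in the proof of Theorem \ref{existenceOfMu} and in Section \ref{SectionRadialPick}), though your Beta identity would extend to measures just as easily. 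Your verification that $\om_s$ is a weight (integrability since $\widehat{\om_s}(0)$ is a multiple of $\widehat\om(2s)\le\widehat\om(0)<\infty$, and non-degeneracy inherited from \eqref{eqjm1}) is somewhat more detailed than the paper's one-line remark, and the final specialization $L^2_a(\om_s)=B^0_{\om_s}=B^{-s}_\om$ at $t=-s$ matches the paper.
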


One checks that for all $s\le 0$ and all  radial weights $\om$, we have  $\Mult(B^s_\om)=H^\infty.$  Here $H^\infty$ denotes the bounded analytic functions on $\Bd$, and  $$\Mult(\HB)=\{\varphi\in \Hol(\Bd): \varphi f \in \HB \text{ for all }f\in \HB\}$$ denotes the multiplier algebra of $\HB$.

In this paper we are interested in $\Mult(B^s_\om)$ for $s> 0$. In general in those cases it turns out that $\Mult(B^s_\om)$ is a proper subset of $H^\infty$, but it is worthwhile to note that there are  radial weights $\om$ such that $B^s_\om=L^2_a(\mu_s)$ for each $s\in \R$ for some weight $\mu_s$ and hence $\Mult(B^s_\om)=H^\infty$ holds for all $s\in \R$. The weight $\om(z)=e^{\frac{-1}{1-|z|^2}}$ is an example of a weight where this happens. Indeed, in this case for each positive integer $N$ the function $(1-|z|^2)^{-4N}\om(z)$ is also integrable, and in Example \ref{ExpExample} we will show that $R^N f \in L^2_a(\om)$ if and only if $$\int_{\Bd} |f|^2 (1-|z|^2)^{-4N}\om dV<\infty.$$ By Theorem \ref{IntroomxTheorem} this implies that $B^s_\om$ is a weighted Bergman space for each $s\le N$, and since $N$ was arbitrary it follows that the same is true for all $s\in \R$.

%
If $\HH\subseteq \Hol(\Bd)$ is a Hilbert function space and if $\HE$ is an auxiliary Hilbert space, then the identification of  elementary tensors of the type $f \otimes x$, $f\in \HH, x\in \HE$ with  $\HE$-valued functions $f(\cdot)x$ extends to define a Hilbert space $\HH(\HE)$ of $\HE$-valued analytic functions on $\Bd$ that is isomorphic to $\HH\otimes \HE$. If $\HH$ and $\HK$ are two Hilbert spaces of analytic functions on $\Bd$ and if $\HE$ and $\HF$ are auxiliary Hilbert spaces, then $\Mult(\HH(\HE),\HK(\HF))$ will denote the multipliers from $\HH(\HE)$ to $\HK(\HF)$, i.e. those functions $\Phi:\Bd \to \HB(\HE,\HF)$ such that $F\to M_\Phi F, (M_\Phi F)(z)=\Phi(z)F(z)$ defines a bounded linear transformation from $\HH(\HE)$ to $\HK(\HF)$. We will write $\Mult(\HH,\HK)= \Mult(\HH(\C),\HK(\C))$ for the scalar-valued multipliers.

In the paper \cite{AlHaMcCRiWeakProd}, an important role was played by the multiplier inclusion condition. For a weighted Besov space $B^N_\om$, where $N \in \mathbb N$, this condition means that
\begin{equation*}
  \Mult(B_\omega^N,B_\omega^N(\ell^2)) \subseteq
  \Mult(B_\omega^{N-1},B_\omega^{N-1}(\ell^2)) \subseteq \cdots
  \subseteq \Mult(B^0_\omega, B^0_\omega(\ell^2))
\end{equation*}
with continuous inclusions.
We established this condition for the Drury--Arveson space and a few other standard weighted Besov spaces using an
elementary method. It is also possible to use the complex method of interpolation to establish inclusions of multiplier spaces.
 Indeed, if $s,t, \alpha \in \R$ with $s\le t$ and $\alpha\ge 0$, then it is shown in \cite{CasFab_Bekolleweights} that for Bekoll\'{e}-Bonami  weights $\om$ one has $$\Mult(B^{t+\alpha}_\om,B^{s+\alpha}_\om)\subseteq  \Mult(B^t_\om,B^s_\om).$$ Note that for Bekoll\'{e}-Bonami weights that are not necessarily radial the following definition is used for the weighted Besov space
 $$B^s_\om=\{f\in \Hol(\Bd): R^Nf\in L^2_a((1-|z|^2)^{2(N-s)}\om(z))\},$$ where $N$ is any non-negative integer $\ge s$. For radial weights satisfying a Bekoll\'{e}-Bonami condition this coincides with the definition used here since in that case $(1-|z|^2)^{2(N-s)}\om \approx \om_{N-s}$, see e.g. Lemmas \ref{vxdoublingweights} and \ref{BekolleAndDoubling}.

In this paper, we use a third method to establish a general result about inclusions of multiplier spaces of unitarily invariant Hilbert function spaces
on $\bB_d$,
using the fact that multiplication operators are triangular with respect to the common orthogonal basis of monomials.
In particular, we obtain the following theorem, which shows that the multiplier inclusion condition holds whenever $\om$ is a radial weight. It is proved in Corollary~\ref{cor:Besov_rectangular_multiplier_inclusion} (also see Corollary~\ref{unrestricted}).

\begin{theorem}
  \label{thm:mult_inclusion_intro}
  Let $\omega$ and $\nu$ be  radial weights in $\bB_d$ and let $s,t,s',t' \in \mathbb R$ with $t \le s$ and $t'-s' \le t-s$.
  Then for any pair $\cE,\cF$ of separable Hilbert spaces,
\begin{equation*}
  \Mult(B^s_\om(\cE), B^{s'}_{\nu}(\cF)) \subseteq \Mult(B^{t}_\om(\cE), B^{t'}_{\nu}(\cF))
\end{equation*}
and the inclusion is contractive.
\end{theorem}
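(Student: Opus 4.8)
The plan is to exploit the unitarily invariant structure: in every space $B^s_\om$ the monomials $\{z^\alpha\}$ form an orthogonal basis, and multiplication operators are lower triangular with respect to the grading by homogeneous degree. Writing $u_\alpha = \|z^\alpha\|_{L^2_a(\om)}$ and $v_\beta = \|z^\beta\|_{L^2_a(\nu)}$, the definition \eqref{BesovDefinition} gives $\|z^\alpha\|^2_{B^s_\om} = \lambda_{|\alpha|}^{2s}\, u_\alpha^2$, where $\lambda_n = n$ for $n\ge 1$ and $\lambda_0 = 1$; the same formula with $v,\nu,s'$ describes $B^{s'}_\nu$. The key point is that the dependence on the smoothness index enters the normalized basis only through the explicit scalar factors $\lambda_{|\alpha|}^{s}$.

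First I would compute the matrix of $M_\Phi$ in the normalized monomial bases. Expanding $\Phi=\sum_{k\ge 0}\Phi_k$ into homogeneous parts, only $\Phi_{|\beta|-|\alpha|}$ contributes to $\langle \Phi z^\alpha, z^\beta\rangle$, so the entries vanish unless $|\beta|\ge|\alpha|$. Setting $K_{\beta\alpha}=\langle \Phi_{|\beta|-|\alpha|}z^\alpha,z^\beta\rangle_{L^2_a(\nu)}/(u_\alpha v_\beta)$, a quantity independent of all four indices $s,t,s',t'$, a direct calculation gives for the matrix of $A:=M_\Phi\colon B^s_\om(\cE)\to B^{s'}_\nu(\cF)$ the entries $A_{\beta\alpha}=\lambda_{|\beta|}^{s'}\lambda_{|\alpha|}^{-s}K_{\beta\alpha}$, and likewise $B_{\beta\alpha}=\lambda_{|\beta|}^{t'}\lambda_{|\alpha|}^{-t}K_{\beta\alpha}$ for $B:=M_\Phi\colon B^t_\om(\cE)\to B^{t'}_\nu(\cF)$. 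Dividing, the two operators are related by a scalar Schur multiplier,
\[
B_{\beta\alpha}=S_{\beta\alpha}\,A_{\beta\alpha},\qquad S_{\beta\alpha}=\lambda_{|\beta|}^{\,t'-s'}\,\lambda_{|\alpha|}^{\,s-t},
\]
which is supported on $|\beta|\ge|\alpha|$ because $K$ is. Since $S$ is scalar, it suffices to prove that $S$ is a contractive Schur multiplier, the $\cB(\cE,\cF)$-valued case following verbatim by treating the entries blockwise.

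The hypotheses enter exactly here. Put $a=s-t\ge 0$ and $b=s'-t'\ge a$ (the latter is $t'-s'\le t-s$). On the support $|\beta|\ge|\alpha|$ one has $\lambda_{|\beta|}\ge\lambda_{|\alpha|}\ge 1$, and I would factor
\[
S_{\beta\alpha}=\lambda_{|\beta|}^{\,a-b}\cdot\Big(\tfrac{\lambda_{|\alpha|}}{\lambda_{|\beta|}}\Big)^{a}=\lambda_{|\beta|}^{\,a-b}\cdot\Big(\tfrac{\min(\lambda_{|\alpha|},\lambda_{|\beta|})}{\max(\lambda_{|\alpha|},\lambda_{|\beta|})}\Big)^{a}.
\]
The first factor depends only on the row index and is $\le 1$ because $a-b\le 0$ and $\lambda_{|\beta|}\ge 1$; multiplying $A$ by the diagonal operator $\diag(\lambda_{|\beta|}^{\,a-b})$ on the target space is therefore a contraction. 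For the second factor I would write $(\min/\max)^{a}=e^{-a|\log\lambda_{|\beta|}-\log\lambda_{|\alpha|}|}$; since $u\mapsto e^{-a|u|}$ is positive definite on $\R$ for $a\ge 0$ (its Fourier transform $2a/(a^2+\xi^2)$ is nonnegative), the kernel $(\beta,\alpha)\mapsto e^{-a|\log\lambda_{|\beta|}-\log\lambda_{|\alpha|}|}$ is positive semidefinite with unit diagonal, hence a Gram matrix of unit vectors, which by the standard factorization criterion makes it a contractive Schur multiplier. Composing the two contractions yields $\|B\|\le\|A\|$, the asserted contractive inclusion.

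I expect the main obstacle to be the Schur multiplier step: recognizing the triangular factor $(\min/\max)^{a}$ and verifying its contractivity. The positive definiteness of $e^{-a|u|}$ is what makes that factor harmless, and it is precisely here that $a=s-t\ge 0$ is needed, while $b\ge a$, i.e. $t'-s'\le t-s$, is exactly what tames the remaining diagonal factor; thus both hypotheses are consumed, each in one factor. The reduction to a \emph{scalar} Schur multiplier $S$ that is independent of $\cE$ and $\cF$ also disposes of the full vector-valued generality at no extra cost.
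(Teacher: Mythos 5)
Your argument is correct, and it establishes the asserted (indeed completely) contractive inclusion; but the core mechanism is genuinely different from the paper's. The skeleton coincides: the paper's proof (Corollary \ref{cor:Besov_rectangular_multiplier_inclusion}, via Proposition \ref{prop:rectangular_multiplier_inclusion} and Corollary \ref{cor:kacnelson_rectangular}) also passes to normalized monomial bases, uses that $M_\Phi$ is lower triangular with respect to the degree grading, and splits the index shift into a diagonal contraction on the target (consuming $t'-s'\le t-s$, exactly as your factor $\lambda_{|\beta|}^{a-b}$ does, and as the contraction $S$ with diagonal $\|e_n\|_{\cH}/\|e_n'\|_{\cH'}$ does in Corollary \ref{cor:kacnelson_rectangular}) plus a conjugation by a single nonincreasing diagonal (consuming $t\le s$). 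The difference lies in how that triangular conjugation factor is tamed. The paper invokes Kacnelson's lemma (Lemma \ref{lem:Kacnelson}): after an approximation by finite matrices, $z\mapsto D(z)TD(z)^{-1}$ is entrywise polynomial for lower triangular $T$, and the maximum modulus principle together with unitarity of $D(z)$ for $z\in\bT^n$ yields complete contractivity. You instead observe that on the support $|\beta|\ge|\alpha|$ the factor $(\lambda_{|\alpha|}/\lambda_{|\beta|})^{a}$ is the restriction of the symmetric kernel $e^{-a|\log\lambda_{|\beta|}-\log\lambda_{|\alpha|}|}$, which is positive semidefinite with unit diagonal by Bochner's theorem, hence a Gram matrix of unit vectors, hence a contractive Schur multiplier via the factorization $E\ast A=W^*(A\otimes I)V$ with contractions $V,W$; altering $S$ off the support is harmless since $A$ vanishes there. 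What each approach buys: the paper's route is elementary and self-contained (nothing beyond the maximum modulus principle and a finite-rank reduction); yours isolates a crisp conceptual reason (positive definiteness of $e^{-a|u|}$), avoids the finite-dimensional approximation, and absorbs the $\cB(\cE,\cF)$-valued and matricial cases at no cost because a scalar Schur multiplier acts blockwise. One point you share with the paper and should spell out in a polished writeup: the Schur-multiplied matrix a priori defines \emph{some} bounded operator $B^t_\om(\cE)\to B^{t'}_\nu(\cF)$ with the correct entries; identifying it with multiplication by $\Phi$, so that $\Phi$ really lies in $\Mult(B^t_\om(\cE),B^{t'}_\nu(\cF))$, requires the routine density and pointwise-convergence argument that the paper compresses into the phrase ``densely defined'' in Corollaries \ref{cor:kacnelson} and \ref{cor:kacnelson_rectangular}.
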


Given a sequence $\Phi = \{\varphi_1,\varphi_2,\ldots \} \subseteq\Mult(\HH,\HK)$ of
multipliers, we can consider the
column operator  $\Phi^C: h\to (\varphi_1h, \varphi_2 h,...)^T$ and the row operator $\Phi^R:(h_1,h_2,...)^T \to \sum_{i\ge 1}\varphi_ih_i$. Here for ease of writing  we have used $(h_1,...)^T$ to denote the transpose of a row vector. We write $M^C(\HH,\HK)$ for the set of those sequences $\Phi$ whose
column operator $\Phi^C$ is bounded, that is, $\Phi^C \in \Mult(\HH,\HK(\ell_2))$. Similarly,
let $M^R(\HH,\HK)$ denote all sequences $\Phi$ for which the row operator is bounded, i.e. $\Phi^R \in \Mult(\HH(\ell_2),\HK)$.
We will abbreviate the notations to $M^R(\HH)$ and $M^C(\HH)$, if $\HH=\HK$.

 Trent showed that for the Dirichlet space $D$ of the unit disc $\D\subseteq \C$ one has the continuous inclusion $M^C(D)\subseteq M^R(D)$ and the norm of the inclusion is at most $\sqrt{18}$, see Lemma 1 of \cite{TrentCorona}.
 The results in \cite{AlHaMcCRiWeakProd} establish that $M^C(\HH) \subseteq M^R(\HH)$ for certain standard weighted Besov
 spaces $\HH$ including the Drury--Arveson space.
 Using Theorem \ref{thm:mult_inclusion_intro}, we now obtain a more general result, which is Theorem \ref{thm:BCBR_general}.

\begin{theorem}
  \label{thm:BCBR_intro}
  Let $\om$ and $\nu$ be  radial weights in $\bB_d$, and let $s, t \in \R$. Then
  \begin{equation*}
    M^C(B^s_\om, B^t_\nu) \subseteq M^R(B^s_\om,B^t_\nu)
  \end{equation*}
  and the inclusion is continuous.
\end{theorem}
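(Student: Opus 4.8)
The plan is to combine the multiplier inclusion theorem (Theorem~\ref{thm:mult_inclusion_intro}) with a Leibniz-rule expansion and a family of Carleson measure estimates, generalizing Trent's argument for the Dirichlet space. First I would normalize the indices. By the weight-shifting identity $B^\tau_\rho = B^{\tau+\sigma}_{\rho_\sigma}$ of Theorem~\ref{IntroomxTheorem} (valid for $\sigma>0$), applied with $\sigma = M-s$ to $\om$ and $\sigma = N-t$ to $\nu$ for nonnegative integers $M \ge s$ and $N \ge t$, the spaces $B^s_\om$ and $B^t_\nu$ coincide, with equivalent norms, with $B^M_{\om'}$ and $B^N_{\nu'}$ for the radial weights $\om' = \om_{M-s}$, $\nu' = \nu_{N-t}$. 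Since $M^C$ and $M^R$ depend only on the spaces and their norm classes, after renaming I may assume $s = M$ and $t = N$ are nonnegative integers. I fix $\Phi = (\varphi_i) \in M^C(B^M_\om, B^N_\nu)$, normalized so that its column operator has norm $\le 1$, and aim to bound $\|\Phi^R h\|_{B^N_\nu}$ by a constant multiple of $\|h\|_{B^M_\om(\ell^2)} = (\sum_i \|h_i\|_{B^M_\om}^2)^{1/2}$ for $h = (h_i) \in B^M_\om(\ell^2)$.

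Recall that $R f_n = n f_n$ for homogeneous $f_n$, so the genuine radial derivative acts as $R^N f = \sum_n n^N f_n$ and obeys the exact Leibniz rule $R^N(fg) = \sum_{j=0}^N \binom{N}{j}(R^j f)(R^{N-j}g)$. Writing $F = \Phi^R h = \sum_i \varphi_i h_i$, definition \eqref{BesovDefinition} gives $\|F\|_{B^N_\nu}^2 = \|\nu\|_{L^1}|F(0)|^2 + \int_{\Bd}|R^N F|^2 \nu\, dV$. The point-evaluation term is handled by Cauchy--Schwarz, $|F(0)|^2 \le (\sum_i |\varphi_i(0)|^2)(\sum_i|h_i(0)|^2)$, where the first factor is at most the squared column norm and the second is controlled by boundedness of evaluation at $0$ on $B^M_\om$. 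For the integral term, expanding $R^N F = \sum_{j=0}^N \binom{N}{j}\sum_i (R^j\varphi_i)(R^{N-j}h_i)$ and using Cauchy--Schwarz in $j$, it suffices to bound each
\begin{equation*}
  T_j := \int_{\Bd}\Big|\sum_i (R^j\varphi_i)(R^{N-j}h_i)\Big|^2 \nu\, dV, \qquad 0 \le j \le N .
\end{equation*}
The pointwise inequality $|\sum_i a_i b_i|^2 \le (\sum_i|a_i|^2)(\sum_i|b_i|^2)$ with $a_i = R^j\varphi_i$, $b_i = R^{N-j}h_i$ reduces this to the measure estimate $T_j \le \int_{\Bd}(\sum_i|R^{N-j}h_i|^2)\, d\mu_j$, where $d\mu_j := (\sum_i|R^j\varphi_i|^2)\,\nu\, dV$.

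The crux is to show that each $\mu_j$ is a Carleson measure for $B^{M-N+j}_\om$, with constant bounded by the column norm. Granting this, and noting that $R^{N-j}$ maps $B^M_\om$ into $B^{M-N+j}_\om$ contractively (on homogeneous expansions $\|R^{N-j}h_i\|_{B^{M-N+j}_\om}\le \|h_i\|_{B^M_\om}$), summing the Carleson estimate over $i$ gives $T_j \le C\sum_i\|R^{N-j}h_i\|_{B^{M-N+j}_\om}^2 \le C\sum_i\|h_i\|_{B^M_\om}^2$, which finishes the proof. To establish the Carleson property I would induct on $j$. Theorem~\ref{thm:mult_inclusion_intro}, applied from $(M,N)$ to $(M-N+j,\,j)$ (both hypotheses $M-N+j\le M$ and $j - N \le (M-N+j)-M$ hold with equality), shows $\Phi^C\in\Mult(B^{M-N+j}_\om, B^j_\nu(\ell^2))$ contractively, i.e. $\sum_i\|\varphi_i f\|_{B^j_\nu}^2 \le \|f\|_{B^{M-N+j}_\om}^2$. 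For $j=0$ this reads $\int|f|^2 d\mu_0 \le \|f\|_{B^{M-N}_\om}^2$, the base case. For the step, expand $R^j(\varphi_i f) = (R^j\varphi_i)f + \sum_{l=0}^{j-1}\binom{j}{l}(R^l\varphi_i)(R^{j-l}f)$, so that
\begin{equation*}
  \int_{\Bd}|f|^2\, d\mu_j \le 2\sum_i\|\varphi_i f\|_{B^j_\nu}^2 + 2\sum_i\int_{\Bd}\Big|\sum_{l=0}^{j-1}\binom{j}{l}(R^l\varphi_i)(R^{j-l}f)\Big|^2\nu\, dV .
\end{equation*}
The first sum is $\le 2\|f\|_{B^{M-N+j}_\om}^2$; the second, after Cauchy--Schwarz in $i$ and $l$, is dominated by a constant times $\sum_{l<j}\int|R^{j-l}f|^2 d\mu_l$, and since $R^{j-l}f$ lies in $B^{M-N+l}_\om$ with norm $\lesssim\|f\|_{B^{M-N+j}_\om}$, the induction hypothesis bounds it by $C\|f\|_{B^{M-N+j}_\om}^2$, closing the induction.

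I expect the main obstacle to be the bookkeeping in this last induction: correctly matching the shifted index $B^{M-N+j}_\om$ delivered by Theorem~\ref{thm:mult_inclusion_intro} with the space in which the leftover factors $R^{N-j}h_i$ and $R^{j-l}f$ naturally live, and verifying that the lower-order Leibniz terms feed exactly into the previously established Carleson estimates. The conceptual heart — that a bounded column multiplier generates, via Theorem~\ref{thm:mult_inclusion_intro}, the entire family of Carleson measures $\mu_j$ needed to control the row operator — is where the multiplier inclusion does the real work; everything else is the radial-derivative analogue of Trent's computation. Tracking constants through the reduction and the induction yields $\|\Phi^R\| \lesssim \|\Phi^C\|$, giving the asserted continuity of the inclusion.
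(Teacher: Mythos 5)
Your argument is correct, but it diverges from the paper at the decisive step. The paper's proof (Theorem \ref{thm:BCBR_general}) shares your first two moves --- the index shift of Theorem \ref{omxTheorem} and the contractive inclusions of Corollary \ref{cor:Besov_rectangular_multiplier_inclusion} --- but it arranges a \emph{common} integer index for both spaces (choosing $x,y \ge 0$ with $s+x = t+y = N \in \bN$) and then simply cites Theorem 4.2 of \cite{AlHaMcCRiWeakProd}, which asserts that the resulting chain of multiplier inclusions for the pair $(B^N_{\widetilde \om}, B^N_{\widetilde \nu})$ already implies $M^C \subseteq M^R$. What you have done is reconstruct the proof of that cited theorem rather than invoke it: your Leibniz expansion of $R^N(\sum_i \varphi_i h_i)$, the measures $d\mu_j = (\sum_i |R^j \varphi_i|^2)\,\nu\, dV$, and the induction showing each $\mu_j$ is Carleson for $B^{M-N+j}_\om$ constitute exactly the Trent-style engine that the paper outsources to the reference. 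Your details check out: the applications of Theorem \ref{thm:mult_inclusion_intro} with source indices $(M,N)$ and target indices $(M-N+j,\,j)$ are legitimate since both constraints hold with equality; the contraction $\|R^{N-j}h\|_{B^{M-N+j}_\om} \le \|h\|_{B^M_\om}$ is immediate from the homogeneous expansion \eqref{Bsom-norm}; the base case is the identity $\|g\|^2_{B^0_\nu} = \int_{\Bd} |g|^2 \nu\, dV$; and the induction constants remain homogeneous of degree two in the column norm, so the bound $\|\Phi^R\| \lesssim \|\Phi^C\|$ does come out. Your route buys self-containedness and slightly more flexibility (you never need the two spaces to share a common integer index, since the Carleson induction handles $B^M_{\om'}$ and $B^N_{\nu'}$ with $M \ne N$ directly), at the cost of length: it silently re-proves a result the paper deliberately quotes.
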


 It is known and easy to verify that $M^C(L^2_a(\om))=M^R(L^2_a(\om))=H^\infty(\ell_2)$, where
$$H^\infty(\ell_2)=\{(\varphi_1,\varphi_2,...): \varphi_j\in H^\infty \text{ and } \sup_{z\in \Bd} \sum_j |\varphi_j(z)|^2 < \infty\}.$$

One application of Theorem \ref{thm:BCBR_intro} is to provide
another proof of the characterization of interpolating sequences established in \cite{AlHaMcCRiInter} in the case of radially weighted
Besov spaces with the complete Pick property. The proof in \cite{AlHaMcCRiInter} uses the Marcus--Spielman--Srivastava theorem \cite{MarcusSpielmanSrivastava}, but as explained in Remark 3.7 in \cite{AlHaMcCRiInter}, this theorem can be avoided for spaces $\HH$ with the property that $M^C(\HH) \subseteq M^R(\HH)$.

A Hilbert function space $\HH$ is a Hilbert space of complex-valued functions on a set $X$ such that point evaluations for points in $X$ define continuous linear functionals on $\HH$.
 Every Hilbert function space $\HH$ has a reproducing kernel, i.e. a function $k:X \times X \to \C$ such that $f(w)=\la f, k_w\ra $ for all $w\in X$, where $k_w(z)=k(z,w)$.  We say that $k$ is normalized, if there is a $z_0\in X$ such that $k_{z_0}=1$.

 By a normalized complete Pick kernel we mean a normalized reproducing kernel of the form $k_w(z)=\frac{1}{1-u_w(z)}$, where $u_w(z)$ is positive definite, i.e. whenever $n\in \N, z_1,...,z_n \in X, $ and $a_1,..., a_n \in \C$ we have $\sum_{i,j}a_i \overline{a}_j u_{z_j}(z_i) \ge 0$.
 (Normally complete Pick kernels are defined intrinsically, but by the McCullough-Quiggin theorem they are precisely of this form. See \cite{mccul92, qui93, agmc_cnp}).

 An important example of such a complete Pick kernel is the Szeg{\H{o}} kernel $k_w(z)=(1-\overline{w}z)^{-1}$. It is the reproducing kernel for the Hardy space $H^2$ of the unit disc $\D$.
 Many properties of the Hardy space carry over to other spaces with complete Pick kernels---see \cite{AlHaMcCRiM}
 for some examples.
 We will say that a Hilbert function space $\HH$ is a complete Pick space, if there is an equivalent norm on the space such that the reproducing kernel for that norm is a normalized complete Pick kernel.
In  \cite{AlHaMcCRiM} it is proven that  complete Pick spaces  $\HH$
   are contained in the Smirnov class $N^+(\HH)$ associated with $\HH$, where
   $$N^+(\HH)=\{f=\frac{\varphi}{\psi}: \varphi, \psi \in \Mult(\HH), \psi \text{ cyclic in }\HH\}$$ and a multiplier $\psi$ is called cyclic if $\psi \HH$ is dense in $\HH$.

  It is known that for all $s\ge d/2$ the spaces $B^{s}_{{\mathbf 1}}$ are complete Pick spaces
  (this can be seen as in Corollary 7.41 of \cite{AgMcC}).
 In particular, for $d/2\le s < (d+1)/2$ the space  $B^{s}_{{\mathbf 1}}$ has reproducing kernel $ \frac{1}{(1-\la z, w\ra )^{d+1-2s}}$ (up to equivalence of norms), which can be seen to be a complete Pick kernel by consideration of the binomial series coefficients.
   On the other hand, if $s<d/2$, then $B^s_{\mathbf 1}$ is not a complete Pick space, because $B^s_{\mathbf 1}\nsubseteq N^+(B^s_{\mathbf 1})$. Indeed, in this case $B^s_{\mathbf 1}$ has a reproducing kernel of the type $\frac{1}{(1-\la z,w\ra)^\gamma}$ for some $\gamma >1$.  If $d=1$, then  $B^s_{\mathbf 1}$ is a weighted Bergman space, which will contain functions that are not in the Nevanlinna class, and hence cannot be ratios of multipliers. The same is true if $d>1$. In that case  the $d=1$ result implies that there are functions of the form $f(z_1,0,...,0)$ in $B^s_{\mathbf 1}$ that are not the ratio of two bounded functions.

   An observation that was shared years ago with us by Serguey Shimorin is that if the Cauchy dual of a space of functions in the unit disc is a weighted Bergman space, then the original space is a complete Pick space. An analogue of this holds for functions in $\Bd$ and for radially symmetric spaces we have worked that out in Lemma \ref{Pick kernel}. For many radially symmetric weighted Besov spaces that leads to a condition which is easy to check:

  \begin{theorem} \label{IntroPickBesov} Let $\alpha>-1$, $0\le r_0 <1$, and let $\om$ be a radial weight such that $\frac{\om(z)}{(1-|z|^2)^\alpha}$ is nondecreasing in $|z|$ for $r_0<|z|<1$. Then $B^s_\om$ is a complete Pick space for all $s\ge \frac{\alpha+d}{2}$.
  \end{theorem}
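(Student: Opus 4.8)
The plan is to exploit that $B^s_\om$ is unitarily invariant, so its reproducing kernel has the form $k(z,w)=\sum_{n\ge0}c_n\langle z,w\rangle^n$ with $c_n>0$; rescaling the norm we may assume $c_0=1$. For such kernels the complete Pick property is equivalent to the one–variable coefficient condition that $1/\sum_n c_n x^n = 1-\sum_{n\ge1}b_n x^n$ with all $b_n\ge0$, since then $u_w(z)=\sum_{n\ge1}b_n\langle z,w\rangle^n$ is positive definite on $\Bd$ and $k=1/(1-u)$. This is precisely the situation covered by Lemma \ref{Pick kernel} (the radial form of Shimorin's observation), so it suffices to produce an equivalent norm whose coefficient sequence $(c_n)$ meets this condition; by a classical theorem of Kaluza it is enough that $(c_n)$ be log-convex, and in particular that it be a Hausdorff moment sequence.

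First I would compute the coefficients. Expanding into homogeneous parts and integrating in polar coordinates gives $\|f_n\|^2_{L^2_a(\om)}=\mu_n\rho_n\|f_n\|^2_{H^2_d}$, where $\mu_n=2d\int_0^1 r^{2n+2d-1}\om(r)\,dr$ are the radial moments and $\rho_n=(d-1)!\,n!/(n+d-1)!$ relates the ball and Drury--Arveson norms. Hence $c_n=1/(n^{2s}M_n)$ for $n\ge1$, where $M_n:=\mu_n\rho_n$. The structural point is that $M_n$ is a Hausdorff moment sequence: both $\mu_n$, the moments of $d\,t^{d-1}(1-t)^\alpha h(\sqrt t)\,dt$ after $t=r^2$, and $\rho_n$, which is a constant times $\Gamma(n+1)/\Gamma(n+d)$, are moment sequences, and entrywise products of moment sequences are moment sequences. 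Log-convexity of $(c_n)$ is therefore equivalent to $n^{2s}M_n$ being log-concave, i.e. to the discrete bound $M_{n-1}M_{n+1}/M_n^2\le(1+1/(n^2-1))^{2s}$.

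Next I would reduce to the standard weight and localize where the hypotheses enter. Replacing $\om$ by $(1-|z|^2)^\alpha h(\max(|z|,r_0))$ changes each $\mu_n$ only by an amount $O(r_0^{2n})$, negligible against $\mu_n\gtrsim n^{-\alpha-1}$; the coefficient sequences stay comparable, hence the space is unchanged up to equivalent norm, and we may assume $h=\om/(1-|z|^2)^\alpha$ is nondecreasing on all of $(0,1)$. Now factor $M_n=e_n M_n^{\mathrm{std}}$, where $M_n^{\mathrm{std}}=C\,\Gamma(n+1)/\Gamma(n+d+\alpha+1)$ comes from the standard weight $(1-|z|^2)^\alpha$ and $e_n=\mu_n/\mu_n^{\mathrm{std}}$. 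For the standard weight, $B^s_{(1-|z|^2)^\alpha}=B^{s-\alpha/2}_{\mathbf{1}}$ is complete Pick exactly when $s\ge(\alpha+d)/2$, as recalled in the introduction, and in terms of $M_n^{\mathrm{std}}$ this says $n^{2s}M_n^{\mathrm{std}}$ is log-concave: via the trigamma function, $(\log M_n^{\mathrm{std}})''\le 2s/n^2$, which pivots on the inequality $\alpha+d\le 2s$. It then remains only to show the correction factor $e_n$ is log-concave, for then $n^{2s}M_n=e_n\cdot(n^{2s}M_n^{\mathrm{std}})$ is a product of log-concave sequences, $(c_n)$ is log-convex, and the conclusion follows from Kaluza's theorem and Lemma \ref{Pick kernel}.

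The main obstacle is exactly this last step. With $g_0(t)=t^{d-1}(1-t)^\alpha$ and $h_*(t)=h(\sqrt t)$ nondecreasing, we have $e_n=\int_0^1 t^n g_0 h_*\,dt\big/\int_0^1 t^n g_0\,dt$; writing $\pi_n$ for the probability measure proportional to $t^n g_0\,dt$, one gets $e_n=E_{\pi_n}[h_*]$ and $e_{n+1}/e_n=E_{\pi_n}[t h_*]\big/\big(E_{\pi_n}[t]\,E_{\pi_n}[h_*]\big)$, so that log-concavity of $(e_n)$ is the assertion that this normalized correlation is nonincreasing in $n$. That $h_*$ is nondecreasing is precisely what forces the correlation to be at least $1$ (Chebyshev's correlation inequality, giving that $e_n$ is itself nondecreasing) and, more delicately, controls its decay: the densities $t^n g_0\,dt$ have monotone likelihood ratio in $n$, and the total positivity of the kernel $(n,t)\mapsto t^n$ should yield the required monotonicity of the normalized correlation. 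I expect verifying this moment inequality — rather than the standard-weight computation or the bookkeeping — to be the technical heart, and the hypothesis that $\om/(1-|z|^2)^\alpha$ be nondecreasing to be used exactly here.
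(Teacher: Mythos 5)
Your reductions are fine as far as they go: the identification $c_n=1/(n^{2s}M_n)$, the harmless $O(r_0^{2n})$ modification of the weight inside $|z|\le r_0$, and the log-concavity of the standard factor $n^{2s}\Gamma(n+1)/\Gamma(n+d+\alpha+1)$ exactly when $2s\ge \alpha+d$ are all correct. The gap is the step you explicitly defer: that the correction factor $e_n=\int_0^1 t^n g_0 h_*\,dt\big/\int_0^1 t^n g_0\,dt$ is log-concave whenever $h_*$ is nondecreasing. This is not a routine consequence of total positivity --- it is \emph{false}. Take $d=1$, $\alpha=2$, $\eta=0.05$, and the two-valued nondecreasing function $h_*=\tfrac12+\tfrac12\mathbf{1}_{[1-\eta,1]}$, so that $\om(z)=(1-|z|^2)^2h_*(|z|^2)$ satisfies the hypotheses of the theorem with $r_0=0$. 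A direct integration gives the closed form
$$e_n \;=\; 1-\tfrac12(1-\eta)^{n+1}\Bigl[1+(n+1)\eta+\tfrac{(n+1)(n+2)}{2}\eta^2\Bigr],$$
and one computes $e_4e_6-e_5^2\approx 1.3\times 10^{-4}>0$, so $(e_n)$ is not log-concave. Conceptually, writing $(\log e_n)''=\operatorname{Var}_{\nu_n}(\log t)-\operatorname{Var}_{\pi_n}(\log t)$, where $\pi_n\propto t^ng_0\,dt$ and $\nu_n\propto t^ng_0h_*\,dt$, your Chebyshev/likelihood-ratio observations give the first-order fact that $e_n$ is nondecreasing, but the second-order inequality you need says that tilting by a nondecreasing function never increases the variance of $\log t$, and a step located at the scale $1-t\sim 1/n$ makes $\nu_n$ bimodal and increases that variance. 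Moreover, at the endpoint $s=(\alpha+d)/2$ the standard factor has essentially no slack to absorb such a defect (its logarithmic second difference is only $O(n^{-3})$), so the product $n^{2s}M_n$ can genuinely fail to be log-concave and Kaluza's theorem cannot be applied to the actual coefficient sequence.

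This is precisely where the paper's argument differs from yours, and why it needs more machinery. The paper never asserts that the kernel coefficients of $B^s_\om$ form a Kaluza sequence. Instead, Theorem \ref{PickandBekolle} uses Theorem \ref{existenceOfMu} to manufacture an \emph{auxiliary} measure $\mu$ on $[0,1]$ whose moments satisfy $\int t^n\,d\mu\approx \bigl(n^{2s-d+1}a_n(\om)\bigr)^{-1}$ only up to two-sided constants; the moment sequence of $\mu$ is automatically log-convex (Cauchy--Schwarz), so $\int_{[0,1]}(1-t\la z,w\ra)^{-1}d\mu(t)$ is a normalized complete Pick kernel by Lemma \ref{Pick kernel}, and its space is then shown to coincide with $B^s_\om$ up to equivalence of norms --- which suffices, since the paper's definition of complete Pick space is insensitive to passing to an equivalent norm. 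Constructing $\mu$ is where the hypotheses enter, via the weakly normal/doubling apparatus of Section \ref{sec:weights_finer}. In short, the theorem is true because the coefficients are \emph{comparable} to a log-convex sequence, not because they are one; your approach forbids itself exactly the "up to constants" flexibility that makes the proof work, and repairing it would amount to reproving something like Theorem \ref{existenceOfMu}.
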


This will follow from Theorem \ref{PickandBekolle}, which holds for weights that satisfy a related, but weaker condition.

If $\HH$ and $\HK$ are two Hilbert function spaces on the same set, then we will write $\HH = \HK$ to mean that
$\HH$ and $\HK$ agree as vector spaces and their norms are equivalent, but not necessarily equal.
If $\|\cdot\|_1$ and $\|\cdot\|_2$ are two norms, then we will write $\|f\|_1 \approx \|f\|_2$ to denote that the norms are equivalent. Similarly, if $a_n, b_n \ge 0$, then $a_n \approx b_n$ will mean that there are constants $c,C>0$ such that $ca_n \le b_n \le Ca_n$ holds for all $n \in \N$.

The remainder of this paper is organized as follows. In Section \ref{section_weights_basics}, we collect basic facts
about radially weighted Besov spaces and then prove Theorem \ref{IntroomxTheorem}. In Section \ref{SectionMICradial}, we prove
several results about inclusions of multiplier algebras and of multiplier spaces. In particular, we show Theorems \ref{thm:mult_inclusion_intro} and
\ref{thm:BCBR_intro}. Section \ref{sec:weights_finer} is devoted to the study of several finer properties of weights.
In particular, we introduce weakly normal weights, which will be important in the proof of Theorem \ref{IntroPickBesov}.
Section \ref{SectionRadialPick} then contains the proof of Theorem \ref{IntroPickBesov}. In the final Section \ref{SectionFurther},
we use the methods developed in this paper to establish some additional properties of multipliers of weighted Besov spaces.

\section{Radially weighted Besov spaces and index shifts}
\label{section_weights_basics}

\subsection{Basics about radially weighted Besov spaces}
\label{subsec:basics}

Let $\omega$ be a radial weight on $\Bd$.  We will temporarily write $u_\om(r)=\om(r,0,...,0)$ if $r\in (0,1)$.
Let $\sigma$ be Lebesgue measure on $\dB$, normalized so that $\sigma(\dB)=1$. Then for any non-negative measurable function $h$ on $\Bd$ we have the change of variables
\begin{align*}\int_{\Bd}h\om dV &= \int_0^1 \left(\int_{\dB} h(rw)  d\sigma(w)\right) u_\om(r) 2d r^{2d-1}dr.\end{align*}
In particular, if $f\in \Hol(\Bd)$ with homogeneous expansion $f=\sum_{n=0}^\infty f_n$, then
\begin{align}\label{radialNorm} \int_{\Bd} |f|^2 \om dV &= \sum_{n=0}^\infty a_n(\om) \|f_n\|^2_{H^2(\dB)},\end{align}
where $$a_n(\om)= 2d\int_0^1 r^{2n+2d-1} u_\om(r) dr=  \int_0^1 t^{n} v(t) dt.$$
Here we used
 $v(t)$ is the product $ d \cdot  t^{d-1} \cdot u_\om(\sqrt{t})$ and note that $v \in L^1[0,1]$.
It is clear that this process can be reversed and any positive $L^1[0,1]$-function $v$ can be used as above to associate a function  $\om$ on $\Bd$.
The non-degeneracy condition \eqref{eqjm1} is equivalent to
\begin{equation}
\label{eqjm2}
\int_t^1 v(x)dx >0 \ \quad \text{ for all } t<1.
\end{equation}


We will say that a non-negative function $v \in L^1[0,1]$ is a weight if \eqref{eqjm2} holds.

The following elementary lemma about moments of  weights will be useful in several places.

\begin{lemma}
  \label{lem:weight_asymptotics}
  Let $v,w \in L^1[0,1]$ be two non-negative  weights such that $\lim_{t \nearrow 1}  \frac{v(t)}{w(t)} = 1$ (with the convention $0/0 = 1$). Then
  \begin{equation*}
    \lim_{n \to \infty} \frac{ \int_0^1 t^n v(t) \, dt }{\int_0^1 t^n w(t) \, dt} = 1.
  \end{equation*}
\end{lemma}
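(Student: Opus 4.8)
The plan is to reduce the statement to a clean asymptotic estimate for a single weight. The claim is that if $v/w \to 1$ as $t \nearrow 1$, then the ratio of moments $\int_0^1 t^n v \, dt / \int_0^1 t^n w \, dt \to 1$. The intuition is that high moments $\int_0^1 t^n h(t) \, dt$ are dominated by the mass of $h$ near $t = 1$, since $t^n$ decays to $0$ on any interval $[0,1-\delta]$, while $h$ is integrable. So the behavior of $v/w$ away from $1$ should be asymptotically irrelevant.

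First I would fix $\varepsilon > 0$ and use the hypothesis to choose $\delta \in (0,1)$ so that $(1-\varepsilon) w(t) \le v(t) \le (1+\varepsilon) w(t)$ for all $t \in (1-\delta, 1)$. I would then split each moment integral as $\int_0^1 = \int_0^{1-\delta} + \int_{1-\delta}^1$. On the tail $\int_{1-\delta}^1 t^n v \, dt$, the pointwise bound gives
\begin{equation*}
  (1-\varepsilon) \int_{1-\delta}^1 t^n w \, dt \le \int_{1-\delta}^1 t^n v \, dt \le (1+\varepsilon) \int_{1-\delta}^1 t^n w \, dt.
\end{equation*}
The remaining task is to show that the head $\int_0^{1-\delta} t^n v \, dt$ is negligible compared to the full moment $\int_0^1 t^n w \, dt$ (and symmetrically for $w$). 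For the head I have the crude bound $\int_0^{1-\delta} t^n v \, dt \le (1-\delta)^n \|v\|_{L^1}$, which decays geometrically in $n$.

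The main obstacle is the denominator: to conclude I need a lower bound on $\int_0^1 t^n w \, dt$ that does not decay as fast as $(1-\delta)^n$, so that the geometrically small head is truly negligible. This is exactly where the weight condition \eqref{eqjm2} enters. Since $w$ is a weight, $\int_{1-\delta/2}^1 w(x)\,dx =: c_\delta > 0$, and on $[1-\delta/2, 1]$ we have $t^n \ge (1-\delta/2)^n$, giving $\int_0^1 t^n w \, dt \ge c_\delta (1-\delta/2)^n$. Since $(1-\delta)^n / (1-\delta/2)^n \to 0$, the ratio of head to full moment tends to $0$. Combining this with the tail comparison, for all large $n$ the full ratio $\int_0^1 t^n v / \int_0^1 t^n w$ lies within, say, $(1-2\varepsilon, 1+2\varepsilon)$; letting $\varepsilon \to 0$ finishes the proof. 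I should double-check the degenerate cases allowed by the $0/0 = 1$ convention — for instance where $w$ vanishes near $1$ — but condition \eqref{eqjm2} prevents $w$ from vanishing on a full neighborhood of $1$, so $c_\delta > 0$ always holds for small enough $\delta$, and the argument goes through.
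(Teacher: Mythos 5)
Your proposal is correct and follows essentially the same route as the paper's proof: split the moment integrals at a point near $1$, bound the head by a geometric term, and use the non-degeneracy condition \eqref{eqjm2} on a slightly larger interval (your $[1-\delta/2,1]$ plays the role of the paper's $[\sqrt{r},1]$ with $r^{n/2}$) to show the head is negligible against the denominator. The only cosmetic difference is that you sandwich the ratio two-sidedly with an $\varepsilon$-band, while the paper proves only the $\limsup \le 1$ bound via $\sup_{[r,1]} v/w$ and invokes symmetry in $v$ and $w$.
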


\begin{proof}
  By symmetry, it suffices to show that
  \begin{equation*}
    \limsup_{n \to \infty} \frac{ \int_0^1 t^n v(t) \, dt }{\int_0^1 t^n w(t) \, dt} \le 1.
  \end{equation*}
  To this end, let $r \in (0,1)$ be such that $\frac{v(t)}{w(t)}$ is finite for $t \in [r,1]$. Then
  \begin{equation*}
    \int_0^1 t^n w(t) \, d t \ge \int_r^1 t^n w(t) \, dt \ge r^{n / 2} \int_{\sqrt{r}}^1  w(t) \, dt,
  \end{equation*}
  where the last quantity is strictly positive by \eqref{eqjm2}.
  Moreover,
  \begin{align*}
    \int_0^1 t^n v(t) \, dt &= \int_0^r t^n v(t) \, dt + \int_r^1 t^n v(t) \, dt \\
    &\le r^n \int_0^1 v(t) \, dt + \sup_{x \in [r,1]} \frac{v(x)}{w(x)} \int_r^1 t^n w(t) \, dt.
  \end{align*}
  Therefore,
  \begin{equation*}
    \frac{ \int_0^1 t^n v(t) \, dt }{\int_0^1 t^n w(t) \, dt} \le \sup_{x \in [r,1]} \frac{v(x)}{w(x)} + r^{n/2} \frac{\int_0^1 v(t) \, dt}{\int_{\sqrt{r}}^1 w(t) \, dt },
  \end{equation*}
  so that
  \begin{equation*}
    \limsup_{n \to \infty} \frac{ \int_0^1 t^n v(t) \, dt }{\int_0^1 t^n w(t) \, dt} \le \sup_{x \in [r,1]} \frac{v(x)}{w(x)}.
  \end{equation*}
  This is true for all $r$ sufficiently close to $1$. The result now follows by taking the limit $r \nearrow 1$.
\end{proof}

Let now $\omega$ be a radial weight in $\bB_d$.
We will use the moments $a_n(\om)=\int_0^1t^n v(t) dt$ to express the norm of $B^s_\om$. For $f\in \Hol(\Bd)$ we will continue to write $f=\sum f_n$ for its expansion into a sum of homogeneous polynomials.

Let $s\in \R$. Comparison of (\ref{BesovDefinition}) and (\ref{radialNorm}) shows that  \begin{align}\label{Bsom-norm}  \|f\|^2_{B^s_\om}= a_0(\om)|f(0)|^2 + \sum_{n=1}^\infty n^{2s} a_n(\om) \|f_n\|^2_{H^2(\dB)}.\end{align}
Since $R^Nf=\sum_{n\ge 0} n^N f_n$ it is clear that for each $s\in \R$ we have $f\in B^s_\om$, if and only if $R^N f \in B^{s-N}_\om$.

We also remark that the reproducing kernel of $B_{\omega}^s$ is of the form
\begin{equation*}
  k_w(z) = \sum_{n=0}^\infty b_n \langle z,w \rangle^n,
\end{equation*}
where for $n\ge 1$
\begin{equation*}
  b_n = ||z_1^n||^{-2}_{B^s_{\omega}} = n^{-2 s} a_n(\omega)^{-1}\|z_1^n\|^{-2}_{H^2(\dB)}\approx  n^{-2 s+d-1} a_n(\omega)^{-1}.
\end{equation*}
It follows from Lemma \ref{lem:weight_asymptotics} that $\frac{\int_0^1 t^{n+1} v(t) dt}{\int_0^1 t^{n} v(t) dt} \to 1$ as $n\to \infty$ for any weight $v \in L^1[0,1]$. Hence, $\lim_{n \to 1} b_n / b_{n+1} = 1$. This condition is frequently useful in operator theoretic contexts. For instance, it implies that the tuple $(M_{z_1},\ldots,M_{z_d})$
of multiplication operators by the coordinate functions is essentially normal and has
essential Taylor spectrum $\partial \mathbb B_d$, see Theorem 4.5 of \cite{GHX}.

\subsection{Index shift}
\label{sec_v_x}

Recall from the Introduction that $B_{\mathbf{1}}^s = B_{\omega_{\alpha}}^{s + \frac{\alpha}{2}}$
for all $s \in \mathbb R$ and $\alpha > -1$, where
$\omega_\alpha(z) = (1 - |z|^2)^\alpha$ is a standard weight. We now introduce a generalization of
this procedure which will allow us to shift the index $s$ of the space $B_{\omega}^s$
for more general radial weights $\omega$.

We saw in Section \ref{subsec:basics} that by a change to polar coordinates any radial weight $\om$ on $\Bd$ is associated with a non-negative function $v\in L^1[0,1]$. More generally, let $\mu$ be a finite Borel measure on $[0,1]$. For $x>0$ consider
\begin{align*}\int_0^1 \int_{[t,1]}(s-t)^{x-1} d\mu(s) dt&= \int_{[0,1]} \int_0^s (s-t)^{x-1} dt d\mu(s)\\
&= \int_{[0,1]}\frac{s^{x}}{x}d\mu(s)<\infty.\end{align*}
Thus, for all $x>0$ we can define a non-negative $L^1[0,1]$-function $v_x$ by $$v_x(t)= \int_{[t,1]} \frac{(s-t)^{x-1}}{\Gamma(x)} d\mu(s), t \in [0,1).$$ Here $\Gamma(x)$ denotes the Gamma function.
  It is easy to check that the functions $v_x$ obey the semigroup law $(v_x)_y = v_{x + y}$
  for all $x,y > 0$. We also remark that if $v_1(t) > 0$ for all $r \in (0,1)$, then $v_x$ satisfies \eqref{eqjm2}
   for all $x > 0$.

  The following lemma will be used repeatedly. It will allow us to perform the desired
  index shift for $B^s_{\omega}$ (see Theorem \ref{omxTheorem} below).

\begin{lemma} \label{momentsByParts} Let $\mu$ be a finite positive Borel measure on $[0,1]$, for $x>0$ let $v_x$ be the function associated with $\mu$ as above, and assume that $v_1(t)>0$ for all $0\le t<1$.

 Then for each $x>0$ we have
$$ \lim_{n\to \infty} \frac{ n^{x} \int_0^1 t^nv_x(t) dt}{\int_{[0,1]} t^nd\mu} = 1.$$
\end{lemma}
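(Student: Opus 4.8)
The plan is to evaluate the numerator integral exactly by interchanging the order of integration, and then to reduce the claim to a statement about the moments of $\mu$ alone. First I would substitute the definition of $v_x$ and apply Tonelli's theorem (all integrands are non-negative) to write
\[
\int_0^1 t^n v_x(t)\,dt = \frac{1}{\Gamma(x)} \int_{[0,1]} \left( \int_0^s t^n (s-t)^{x-1}\,dt \right) d\mu(s).
\]
The inner integral is a Beta integral: the substitution $t = su$ gives $\int_0^s t^n (s-t)^{x-1}\,dt = s^{n+x} B(n+1,x) = s^{n+x}\frac{\Gamma(n+1)\Gamma(x)}{\Gamma(n+x+1)}$. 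Hence
\[
\int_0^1 t^n v_x(t)\,dt = \frac{\Gamma(n+1)}{\Gamma(n+x+1)} \int_{[0,1]} s^{n+x}\,d\mu(s),
\]
and therefore
\[
\frac{n^x \int_0^1 t^n v_x(t)\,dt}{\int_{[0,1]} t^n\,d\mu} = \left( n^x \frac{\Gamma(n+1)}{\Gamma(n+x+1)} \right) \cdot \frac{\int_{[0,1]} s^{n+x}\,d\mu(s)}{\int_{[0,1]} s^n\,d\mu(s)}.
\]

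Second, I would dispatch the Gamma factor by Stirling's formula, which gives $n^x \Gamma(n+1)/\Gamma(n+x+1) \to 1$. It then remains to prove that the moment ratio $M_{n+x}/M_n \to 1$, where $M_r = \int_{[0,1]} s^r\,d\mu(s)$. This is exactly the measure analogue of Lemma \ref{lem:weight_asymptotics} (with the trivial ``weight'' comparison $s^x \le 1$ on $[0,1]$), and it is where the hypothesis $v_1(t) = \mu([t,1]) > 0$ for all $t<1$ enters. For the upper bound I would use $s^x \le 1$ to get $M_{n+x} \le M_n$, so $\limsup_n M_{n+x}/M_n \le 1$. For the lower bound, fix $\delta \in (0,1)$ and estimate $M_{n+x} \ge (1-\delta)^x \int_{[1-\delta,1]} s^n\,d\mu$; it then suffices to show that the tail $\int_{[0,1-\delta)} s^n\,d\mu$ is negligible relative to $M_n$. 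This follows from the non-degeneracy hypothesis: indeed $\int_{[0,1-\delta)} s^n\,d\mu \le (1-\delta)^n \mu([0,1])$, while $M_n \ge (1-\delta/2)^n \mu([1-\delta/2,1])$ with $\mu([1-\delta/2,1])>0$, so the quotient is bounded by $\big((1-\delta)/(1-\delta/2)\big)^n \mu([0,1])/\mu([1-\delta/2,1]) \to 0$. Thus $\liminf_n M_{n+x}/M_n \ge (1-\delta)^x$, and letting $\delta \nearrow 1^{-}$, i.e.\ $\delta \to 0$, completes the argument.

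The computation in the first step is routine and the Stirling estimate is standard, so I expect the only real content to be the moment-ratio limit in the second step. The key conceptual point there is that the measures $s^n\,d\mu$ concentrate near $s=1$, where the extra factor $s^x$ becomes harmless; this concentration is guaranteed precisely by the non-degeneracy hypothesis $\mu([t,1])>0$, which prevents $\mu$ from having all of its mass bounded away from $1$ and simultaneously ensures that every denominator $M_n$ is strictly positive.
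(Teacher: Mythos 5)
Your proof is correct, and it takes a genuinely different route from the paper's. You diagonalize the kernel exactly: Tonelli plus the Beta integral yield the closed-form identity
\begin{equation*}
\int_0^1 t^n v_x(t)\,dt \;=\; \frac{\Gamma(n+1)}{\Gamma(n+x+1)} \int_{[0,1]} s^{n+x}\,d\mu(s),
\end{equation*}
so the lemma splits into the Stirling asymptotic $n^x\,\Gamma(n+1)/\Gamma(n+x+1)\to 1$ together with the moment-ratio limit $M_{n+x}/M_n\to 1$, which you prove by a direct concentration-near-$s=1$ argument; this is exactly where the hypothesis $v_1(t)=\mu([t,1])>0$ enters, as you correctly note. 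The paper instead replaces the kernel $(s-t)^{x-1}$ by the logarithmic kernel $(\log (s/t))^{x-1}$, for which the moment transform is exact \emph{with the factor $n^{-x}$ already built in} (namely $n^x\int_0^1 t^{n-1}v_x^*(t)\,dt=\int_{[0,1]}t^n\,d\mu$), and then compares $v_x$ with $v_x^*$ pointwise as $t\nearrow 1$, invoking Lemma \ref{lem:weight_asymptotics}; that route avoids Stirling entirely and reuses the previously proved lemma. What your route buys is the exact identity above, which is stronger than the asymptotic statement and gives explicit two-sided bounds if desired; the cost is that you re-prove, by hand, the measure analogue of Lemma \ref{lem:weight_asymptotics} --- your tail estimate $\int_{[0,1-\delta)}s^n\,d\mu \le (1-\delta)^n\mu([0,1])$ versus $M_n\ge (1-\delta/2)^n\mu([1-\delta/2,1])$ is essentially the same argument as the paper's proof of that lemma, transposed from densities to measures, so the total work is comparable. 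One notational slip at the end: you write ``letting $\delta\nearrow 1^-$, i.e.\ $\delta\to 0$''; the intended (and correct) limit is $\delta\to 0^+$, as your own gloss and the fact that $(1-\delta)^x\to 1$ there make clear.
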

\begin{proof} We start with the observation that for any integer $n>0$ we have $\int_0^1 t^{n-1} \left(\log(1/t)\right)^{x-1} dt = n^{-x} \Gamma(x)$. This can easily be verified with the substitution $t= e^{-\frac{u}{n}}$  (see \cite{XLi}, p.56). Next we define the auxiliary function
$$v^*_x(t)= \int_{[t,1]}\frac{\left(\log \frac{s}{t} \right)^{x-1}}{\Gamma(x)} d\mu(s).$$
An application of Fubini's theorem and the earlier observation shows that $$n^x \int_0^1 t^{n-1} v^*_x(t) dt=\int_{[0,1]} t^n d\mu(t), \ \ n=1,2,...$$
So in order to prove the Lemma, it suffices to show that
$$\lim_{n\to \infty} \frac{\int_0^1 t^{n-1} v^*_x(t) dt}{\int_0^1 t^n v_x(t)dt} = 1.$$ Since $v_1(t) > 0$
for all $t \in (0,1)$, the weights $v_x$ and $v_x^*$ satisfy \eqref{eqjm2}, so the last statement
follows from Lemma \ref{lem:weight_asymptotics} and the observation that $\lim_{t \nearrow 1} \frac{v_x(t)}{v_x^*(t)} = 1$
by elementary properties of the natural logarithm.
\end{proof}

We will now again restrict attention to absolutely continuous measures $d\mu=v(t)dt$. In this case, it makes sense to define $v_0(t)=v(t)$.
We also write
\begin{equation*}
  \widehat v(t) = v_1(t) = \int_t^1 v(x) d x.
\end{equation*}
Note that in this case $v_{x+1}(t)= \int_t^1 v_x(s)ds=\vhat_x(t) $ is valid for all $x \ge 0$, and thus the functions $v_x$ get smoother as $x$ increases. They also decay faster near 1.
The estimate in the following lemma is obvious.

\begin{lemma}\label{(1 minus t)v} If $v\in L^1[0,1]$ is positive, and $v_x$ is as above, then for all $x, \alpha >0$ we have
$v_{x+\alpha}(t)\le \frac{\Gamma(x)}{\Gamma(x+\alpha)}(1-t)^\alpha v_x(t)$  for all $t\in [0,1).$
\end{lemma}

We now investigate this procedure on the level of radial weights in the ball.
Let $\omega$ be a radial weight in $\bB_d$.
For each $x\ge 0$ we define a radial weight $\om_x$ by
\begin{equation*}
  \om_x(z) = \frac{1}{d} |z|^{2 - 2 d} \int_{|w| \ge |z|} \frac{( |w|^2-|z|^2)^{2 x - 1}}{\Gamma(2 x)} \omega(w) d V(w).
\end{equation*}
Then $\om_x$ is the radial weight that corresponds to the $L^1[0,1]$-function $v_{2x}$ that is associated with $v$ as in Lemma \ref{momentsByParts}.

\begin{theorem}\label{omxTheorem} Let $\om$ be a radial weight and let $x\ge 0$.

Then $\om_x$ is a weight,
$$\|f\|^2_{B^{-x}_\om} \approx \int_{\Bd} |f|^2 {\om_x} dV,$$ and for each $s\in \R$ we have $B^s_\om=B^{s+x}_{\om_x}$ with equivalence of norms.
\end{theorem}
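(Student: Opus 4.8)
The plan is to reduce everything to the moment sequences $a_n(\cdot)$ appearing in the norm formula \eqref{Bsom-norm} and then to compare the moments of $\om$ and $\om_x$ via Lemma \ref{momentsByParts}. First I would record that the $L^1[0,1]$-function associated with $\om_x$ is exactly $v_{2x}$: writing $d\mu = v(t)\,dt$ and passing to polar coordinates together with the substitution $s = \rho^2$, $t = |z|^2$ in the defining integral for $\om_x$ turns it into $\frac1d |z|^{2-2d} v_{2x}(|z|^2)$, so that $\om_x$ corresponds to $v_{2x}$ in the sense of Section \ref{subsec:basics}. (This is the identification already announced in the paragraph preceding the statement, so I would only sketch the change of variables.) In particular, by \eqref{radialNorm},
$$a_n(\om_x) = \int_0^1 t^n v_{2x}(t)\,dt \qquad\text{and}\qquad a_n(\om) = \int_0^1 t^n v(t)\,dt.$$

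Next I would apply Lemma \ref{momentsByParts} to the measure $d\mu = v(t)\,dt$ with parameter $2x$. Since $\om$ is a weight, $v$ satisfies \eqref{eqjm2}, i.e.\ $v_1(t) = \int_t^1 v > 0$ for all $t \in [0,1)$; this is precisely the hypothesis of the lemma, and by the remark preceding it this also guarantees that $v_{2x}$ satisfies \eqref{eqjm2}, so that $\om_x$ is indeed a weight. The lemma then yields
$$\lim_{n\to\infty} \frac{n^{2x}\, a_n(\om_x)}{a_n(\om)} = 1.$$
The one point requiring a little care---and really the only nonroutine step---is to upgrade this asymptotic statement to a two-sided comparison valid for \emph{every} $n \ge 1$. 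Because $v$ and $v_{2x}$ are weights, each moment $a_n(\om)$, $a_n(\om_x)$ is strictly positive, so the positive sequence $n^{2x} a_n(\om_x)/a_n(\om)$ converges to $1$ and is therefore bounded above and below by positive constants (the finitely many terms with small $n$ being individually positive and finite). This gives $a_n(\om_x) \approx n^{-2x} a_n(\om)$ for all $n \ge 1$.

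Finally I would substitute this comparison into the norm formula \eqref{Bsom-norm}. For $n \ge 1$ one has $n^{2(s+x)} a_n(\om_x) \approx n^{2s} a_n(\om)$, while the $n=0$ terms $a_0(\om_x)|f(0)|^2$ and $a_0(\om)|f(0)|^2$ are comparable since both constants are positive; summing term by term gives $\|f\|^2_{B^{s+x}_{\om_x}} \approx \|f\|^2_{B^s_\om}$, which is the asserted equality $B^s_\om = B^{s+x}_{\om_x}$ with equivalence of norms. The displayed estimate $\|f\|^2_{B^{-x}_\om} \approx \int_{\Bd} |f|^2 \om_x\,dV$ is then the special case $s = -x$, since by \eqref{radialNorm} the right-hand side equals $\sum_{n\ge 0} a_n(\om_x)\|f_n\|^2_{H^2(\dB)} = \|f\|^2_{B^0_{\om_x}}$. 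The case $x = 0$ is trivial because $\om_0 = \om$ and $v_0 = v$. Thus the whole argument is bookkeeping built on top of the moment asymptotics of Lemma \ref{momentsByParts}, the only genuine subtlety being the passage from the limit to a uniform comparison over all $n \ge 1$.
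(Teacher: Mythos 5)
Your proposal is correct and follows exactly the paper's route: the paper's own (two-line) proof likewise observes that $\om_x$ corresponds to $v_{2x}$, invokes Lemma \ref{momentsByParts} to get $n^{2x}a_n(\om_x)\approx a_n(\om)$, and concludes via the norm formula \eqref{Bsom-norm}. Your write-up merely makes explicit the details the paper leaves implicit (the polar-coordinate identification, the non-degeneracy of $v_{2x}$ via the remark before Lemma \ref{momentsByParts}, and the upgrade from the limit to a uniform two-sided bound using positivity and finiteness of the moments), all of which are handled correctly.
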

\begin{proof} Since $\om$ is a radial weight,  so is $\om_x$.
  Lemma \ref{momentsByParts} implies that $n^{2x} a_n(\om_x) \approx a_n(\om)$ as $n\to \infty$. Now the Theorem follows from (\ref{Bsom-norm}).
\end{proof}
For later reference we note that  Lemma \ref{(1 minus t)v} applies  and we conclude that for all $x>0$ and $ \alpha\ge 0$

\begin{equation} \label{weightsAndGrowth}\frac{\om_{x+\alpha}(z)}{(1-|z|^2)^{2\alpha}} \le  \frac{\Gamma(2x)}{\Gamma(2x+2\alpha)} \ \om_x(z) \ \text{ for all } z\in \Bd.\end{equation}

\section{Multiplier inclusions}
\label{SectionMICradial}

\subsection{Inclusion of multiplier algebras}

Let $\omega$ be a radial weight in $\bB_d$ and let $N \in \bN$.
A crucial condition in \cite{AlHaMcCRiWeakProd} is the multiplier inclusion condition for $B^N_\om$,
which demands that
\begin{equation}
  \label{eqn:mult_inclusion}
  \Mult(B_\omega^N,B_\omega^N(\ell^2)) \subseteq
  \Mult(B_\omega^{N-1},B_\omega^{N-1}(\ell^2)) \subseteq \cdots
  \subseteq \Mult(B^0_\omega, B^0_\omega (\ell^2))
\end{equation}
with continuous inclusions.
In this Section we will show that all weighted Besov spaces defined by  radial weights satisfy this multiplier inclusion condition.
 In fact, we will prove a more general result about inclusion of the multipliers between spaces of analytic functions on the unit ball with unitarily invariant kernels.

 We first recall a few notions from the theory of operator spaces. Let $\cH$ be a Hilbert space
 and let $\cM \subseteq\cB(\cH)$ be a subspace.
 For $n \in \bN$, let $M_n(\cM)$ denote the space of all $n \times n$ matrices with entries in $\cM$.
 The natural identification of $M_n(\cB(\cH))$ with $\cB(\cH^n)$ allows us
 to endow each space $M_n(\cM)$ with a norm. Suppose now that $\cK$
 is another Hilbert space and that
 $\Phi: \cM \to \cB(\cK)$ is a linear map.
 Then for each $n \in \bN$, we obtain an induced linear map
 \begin{equation*}
   \Phi^{(n)}: M_n(\cM) \to M_n(\cB(\cK)), \quad [ m_{i j}] \mapsto [ \Phi(m_{i j})].
 \end{equation*}
 In this setting, we say that $\Phi$ is
 completely contractive if each map $\Phi^{(n)}$ is contractive.

 In Section \ref{subsection3.2} we will see that this notion has a natural analogue for operators between possibly different Hilbert spaces, and then we will mostly be interested in the case when $\cM = \Mult(\cH,\cK)$ for Hilbert function spaces $\cH$ and $\cK$.
 In this case, $M_n(\Mult(\cH,\cK))$ can be identified with $\Mult(\cH(\bC^n), \cK(\bC^n))$,
 so this approach allows us to deal with operator-valued multipliers.

 We begin with the following result, which is essentially due to Kacnelson \cite{Kacnelson72}, see also \cite[Theorem 2.1]{CP14}. For completeness, we provide a proof.
If $\cH$ is a Hilbert space with an orthogonal basis $(e_n)$,
let $\cT(\cK)$ denote the algebra of all bounded lower triangular operators
on $\cK$ with respect to $(e_n)$.

\begin{lemma}[Kacnelson]
  \label{lem:Kacnelson}
  Let $\cH$ be a Hilbert space with orthonormal basis $(e_n)$, let
  $(d_n)$ be a nonincreasing sequence of strictly positive numbers and let
  $D$ denote the diagonal operator on $\cH$ with diagonal $(d_n)$, and let $D^{-1}$ be its possibly unbounded inverse.
  Then for every $T \in \cT(\cH)$, the densely defined operator $D T D^{-1}$
  is bounded and the homomorphism
  \begin{equation*}
    \cT(\cH) \to \cT(\cH), \quad T \mapsto D T D^{-1},
  \end{equation*}
  is completely contractive.
\end{lemma}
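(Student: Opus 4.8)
The plan is to realize the map $\Theta \colon T \mapsto D T D^{-1}$ as a limit of completely contractive maps built out of compressions, and then to read off both boundedness and complete contractivity from lower semicontinuity of the operator norm. The starting observation is that, with respect to the basis $(e_n)$, the operator $DTD^{-1}$ has matrix entries $(DTD^{-1})_{ij} = (d_i/d_j)\, t_{ij}$; that is, $\Theta$ is the Schur multiplier with symbol $m_{ij} = d_i/d_j$. In particular $\Theta$ preserves lower triangularity, and since $(d_n)$ is nonincreasing the surviving factors $d_i/d_j$ (those with $i \ge j$) all lie in $(0,1]$. It is exactly this feature that I want to exploit, since the full Schur multiplier $m_{ij} = d_i/d_j$ is not bounded on all of $\cB(\cH)$.

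For each $k$ let $P_k$ be the orthogonal projection onto $\operatorname{span}\{e_n : n \le k\}$ and consider the block-diagonalizing map $\Phi_{P_k}(T) = P_k T P_k + (I - P_k) T (I - P_k)$. Each $\Phi_{P_k}$ is unital and completely positive (a sum of the compressions $T \mapsto P_k T P_k$ and $T \mapsto (I-P_k)T(I-P_k)$, evaluating to $I$ at $T=I$), hence completely contractive. The key point is that for lower triangular $T$ one has $P_k T (I - P_k) = 0$, so on $\cT(\cH)$ the map $\Phi_{P_k}$ acts as the Schur multiplier whose symbol is the indicator of the complement of $\{(i,j): j \le k < i\}$. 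I then set $\alpha_k = 1 - d_{k+1}/d_k \in [0,1)$ and $\Psi_k = \alpha_k \Phi_{P_k} + (1 - \alpha_k)\operatorname{id}$, which is again unital completely positive, hence completely contractive, and is the Schur multiplier with symbol $\psi_k(i,j) = 1 - \alpha_k \chi_{\{j \le k < i\}}$.

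Next I compose: $\Phi_N = \Psi_0 \circ \Psi_1 \circ \cdots \circ \Psi_N$ is a composition of unital completely positive maps, hence unital completely positive and completely contractive, and, since composing Schur multipliers multiplies their symbols, $\Phi_N$ is the Schur multiplier with symbol $\prod_{k=0}^N \psi_k(i,j)$. For a fixed pair $i \ge j$ this product telescopes: once $N \ge i-1$ it equals $\prod_{k=j}^{i-1}(1-\alpha_k) = \prod_{k=j}^{i-1} d_{k+1}/d_k = d_i/d_j = m_{ij}$. Thus, for every fixed $T \in \cT(\cH)$, the matrix entries of $\Phi_N(T)$ converge to those of $D T D^{-1}$ as $N \to \infty$, while $\|\Phi_N(T)\| \le \|T\|$ for all $N$.

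Finally I pass to the limit. Because $D$ is bounded ($\sup_n d_n < \infty$), the operator $DTD^{-1}$ is densely defined on $\operatorname{span}(e_n)$, and the uniform bound $\|\Phi_N(T)\| \le \|T\|$ together with the entrywise convergence gives $\Phi_N(T) \to DTD^{-1}$ in the weak operator topology; lower semicontinuity of the norm then yields that $DTD^{-1}$ extends to a bounded operator with $\|DTD^{-1}\| \le \|T\|$. Running the same argument at each matrix level $M_n(\cT(\cH))$ (using $\|\Phi_N^{(n)}\| \le 1$) gives complete contractivity, and multiplicativity is immediate from $DTD^{-1}\cdot DSD^{-1} = D(TS)D^{-1}$. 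I expect the main obstacle to be this limiting step: one must check carefully that the a priori only densely defined operator $DTD^{-1}$ genuinely is the weak operator limit of the $\Phi_N(T)$ and inherits their norm bound, rather than merely agreeing with it entrywise. The verification that $\Phi_{P_k}$ is an honest Schur multiplier on $\cT(\cH)$ (via $P_k T(I-P_k)=0$) and that the symbols telescope correctly are the remaining points needing attention, though these are elementary.
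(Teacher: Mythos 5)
Your proof is correct, and it takes a genuinely different route from the paper's. The paper first reduces to finite matrices (compressing by the projections $P_n$, which commute with diagonals), writes $d_j = \alpha_1\cdots\alpha_j$, embeds $D(\alpha)$ into the analytic family $D(z) = \diag(1, z_1, z_1z_2, \ldots)$, observes that triangularity of $T$ makes $z \mapsto D(z)TD(z)^{-1}$ polynomial in $z$ (the $(i,j)$ entry is $t_{ij}z_{j+1}\cdots z_i$ for $i \ge j$), and then applies the maximum modulus principle over the torus $\bT^n$, where $D(z)$ is unitary. You instead view $T \mapsto DTD^{-1}$ as the Schur multiplier with symbol $d_i/d_j$ on the triangular part, and factor that symbol as a telescoping product of the elementary symbols $1 - \alpha_k\chi_{\{j \le k < i\}}$, each implemented by the unital completely positive map $\Psi_k = \alpha_k\bigl(P_kTP_k + (I-P_k)T(I-P_k)\bigr) + (1-\alpha_k)T$; triangularity enters where the paper uses it for analyticity, namely in ensuring the pinching $\Phi_{P_k}$ deletes only the lower-left block $(I-P_k)TP_k$, since $P_kT(I-P_k)=0$ already. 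Both arguments hinge on the same arithmetic fact $d_{k+1}/d_k \in (0,1]$, and both need a limiting step, but of different kinds: the paper's finite-rank reduction versus your weak-operator limit of the compositions $\Phi_N = \Psi_0 \circ \cdots \circ \Psi_N$. The limiting step you flagged as the main risk is in fact unproblematic, and even simpler than you feared: since all the symbols lie in $[0,1]$ and the columns $(t_{ij})_i$ of $T$ are square-summable, dominated convergence gives $\Phi_N(T)e_j \to DTD^{-1}e_j$ in norm on the dense span, and the uniform bound $\|\Phi_N^{(n)}\| \le 1$ then transfers to the limit at every matrix level by lower semicontinuity of the norm under WOT convergence on $\cH^n$. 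What each approach buys: the paper's maximum-modulus argument is shorter and more self-contained; yours is purely operator-algebraic, avoids complex analysis entirely, makes the complete contractivity transparent (everything in sight is UCP), and exhibits the triangular similarity as a limit of convex combinations of pinchings, which is a structurally informative factorization in its own right.
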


\begin{proof}
  If $P_n$ denotes the orthogonal projection onto the linear span of $e_0,\ldots,e_n$,
  then $P_n$ commutes with every diagonal operator. Thus, a straightforward approximation
  argument shows that it suffices to prove the following assertion: For every $n \in {\mathbb N}$
  and every nonincreasing sequence of strictly positive numbers $d_0,\ldots,d_n$,
  the map
  \begin{equation*}
    \Phi: \cT_{n+1} \mapsto \cT_{n+1}, \quad T \mapsto \diag(d_0,\ldots,d_n) T \diag(d_0,\ldots,d_n)^{-1},
  \end{equation*}
  is completely contractive. Here, $\cT_{n+1}$ denotes the algebra of all lower triangular $(n+1) \times (n+1)$ matrices, and $\diag(d_0,\ldots,d_n)$ is the diagonal matrix with diagonal $d_0,\ldots,d_n$.

  To this end, let $d_0,\ldots,d_n$ be nonincreasing strictly positive numbers.
  By multiplying the sequence $d_0,\ldots,d_n$ with $d_0^{-1}$,
  we may assume that $d_0 = 1$. For $j \ge 1$, let $\alpha_j = d_j / d_{j-1}$
  and $\alpha = (\alpha_1,\ldots,\alpha_n)$.
  Then $d_j = \alpha_1 \ldots \alpha_j$ for $j \ge 1$ and $\alpha_j \in (0,1]$ by assumption.

  We will use the maximum modulus principle to show that the map $\Phi$ is completely contractive.
  For $z= (z_1,\ldots,z_n) \in (\bC \setminus \{0\})^n$, define
  \begin{equation*}
    D(z) = \diag(1,z_1,z_1 z_2,\ldots, z_1 z_2 \ldots z_n).
  \end{equation*}
  In particular, $D(\alpha) = \diag(d_0,\ldots,d_n)$.
  If $T = [t_{ij}] \in \cT_{n+1}$ and $i \ge j$, then
  the $(i,j)$-entry of $D(z) T D(z)^{-1}$ is given by
  \begin{equation*}
    z_1 z_2 \ldots z_i t_{i j} z_1^{-1} z_2^{-1} \ldots z_j^{-1}
    = t_{i j} z_{j+1} \ldots z_i.
  \end{equation*}
  Since $T$ is lower triangular, we therefore conclude that the map $z \mapsto D(z) T D(z)^{-1}$
  extends to an analytic $M_{n+1}$-valued map on $\bC^n$.

  Let $[T_{i j}] \in M_r(\cT_{n+1})$. By the maximum modulus principle,
  \begin{equation*}
    ||[\Phi(T_{i j})]|| = ||[ D(\alpha) T_{i j} D(\alpha)^{-1}]||
    \le \sup_{z \in \bT^n} || [ D(z) T_{i j} D(z)^{-1}] ||.
  \end{equation*}
  But if $z \in \bT^n$, then $D(z)$ is unitary, hence
  \begin{equation*}
    || [ D(z) T_{i j} D(z)^{-1}] || = ||(D(z) \otimes I_r) [T_{i j}] (D(z) \otimes I_r)^{-1} ||
    = ||[T_{i j}||,
  \end{equation*}
  which finishes the proof.
\end{proof}

The following corollary is merely a reformulation of Lemma \ref{lem:Kacnelson}.
\begin{corollary}
  \label{cor:kacnelson}
  Let $\cK$ be a Hilbert space with an orthonormal basis $(e_n)$. Suppose
  that $\cH$ is another Hilbert space such that $\cH \subseteq\cK$ as vector spaces,
  such that $(e_n)$ is an orthogonal basis for $\cH$ and
  such that the sequence $(||e_n||_{\cH})$ is nondecreasing.
  Then $\cT(\cH) \subseteq \cT(\cK)$, and the inclusion is a complete contraction.
\end{corollary}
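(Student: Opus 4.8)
The plan is to deduce the corollary directly from Lemma~\ref{lem:Kacnelson} by transporting both algebras to $\ell^2$ through the unitaries determined by the common basis $(e_n)$, after which the passage from $\cH$ to $\cK$ becomes conjugation of a lower triangular operator by a \emph{diagonal} operator whose diagonal is nonincreasing. Since the corollary is advertised as a reformulation of Kacnelson's lemma, the work is almost entirely bookkeeping; the one point requiring care is getting the direction of the conjugation right and recognizing that the hypothesis ``$(\|e_n\|_{\cH})$ nondecreasing'' is exactly what produces the nonincreasing diagonal demanded by the lemma.

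First I would pin down the meaning of the inclusion $\cT(\cH)\subseteq\cT(\cK)$. Because $(e_n)$ is an orthogonal basis of both $\cH$ and $\cK$, the orthogonal projections $P_n$ onto $\Span{e_0,\dots,e_n}$ are the same linear maps in both spaces, so lower triangularity is a basis condition independent of the inner product: a bounded lower triangular $T$ is determined by its matrix $[c_{ij}]$ relative to $(e_n)$, via $Te_j=\sum_{i\ge j}c_{ij}e_i$ with $c_{ij}=0$ for $i<j$. Thus both $\cT(\cH)$ and $\cT(\cK)$ are identified with sets of lower triangular matrices, and the assertion is that a matrix bounded on $\cH$ is bounded on $\cK$ with a norm bound, and completely so.

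Next I would set $w_n=\|e_n\|_{\cH}>0$, which is nondecreasing by hypothesis, and introduce the unitaries $Q_{\cH}\colon\cH\to\ell^2$ and $Q_{\cK}\colon\cK\to\ell^2$ determined by $Q_{\cH}(e_n/w_n)=\delta_n$ and $Q_{\cK}(e_n)=\delta_n$ (so $Q_{\cH}e_n=w_n\delta_n$). A one-line computation then shows that an operator $T$ with matrix $[c_{ij}]$ becomes $A:=Q_{\cH}TQ_{\cH}^{-1}=[\,c_{ij}\,w_i/w_j\,]$ on $\ell^2$, a bounded lower triangular matrix with $\|A\|=\|T\|_{\cB(\cH)}$, whereas $C:=Q_{\cK}TQ_{\cK}^{-1}=[c_{ij}]$, with $\|C\|=\|T\|_{\cB(\cK)}$. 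Writing $D=\diag(1/w_n)$, one checks entrywise that $C=DAD^{-1}$.

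Finally I would invoke Lemma~\ref{lem:Kacnelson} with $d_n=1/w_n$, which is strictly positive and nonincreasing precisely because $(w_n)$ is positive and nondecreasing. The lemma gives that $A\mapsto DAD^{-1}$ is a well-defined, completely contractive homomorphism of $\cT(\ell^2)$; in particular $\|T\|_{\cB(\cK)}=\|C\|=\|DAD^{-1}\|\le\|A\|=\|T\|_{\cB(\cH)}$, so $T\in\cT(\cK)$ and the inclusion is contractive. Because conjugation by the unitaries $Q_{\cH}$ and $Q_{\cK}$ is completely isometric, the amplifications $M_m(\cT(\cH))$ and $M_m(\cT(\cK))$ are identified (completely isometrically) with the corresponding block lower triangular operators on $\ell^2\otimes\bC^m$, and the complete contractivity of the Kacnelson map transfers verbatim to complete contractivity of the inclusion $\cT(\cH)\subseteq\cT(\cK)$. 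The main obstacle, such as it is, is purely organizational: ensuring the matrix identifications are set up so that $\cH\to\cK$ corresponds to conjugation by $\diag(1/w_n)$ rather than its inverse, since only the nonincreasing diagonal falls under the hypothesis of Lemma~\ref{lem:Kacnelson}.
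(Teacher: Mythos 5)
Your proof is correct and takes essentially the same route as the paper: both deduce the corollary from Lemma~\ref{lem:Kacnelson} by conjugating with the diagonal operator with diagonal $(\|e_n\|_{\cH}^{-1})$, which is nonincreasing precisely because $(\|e_n\|_{\cH})$ is nondecreasing. Your explicit transport to $\ell^2$ via the unitaries $Q_{\cH}$ and $Q_{\cK}$ is cosmetic bookkeeping equivalent to the paper's one-line observation that this diagonal operator extends to a unitary $\cK \to \cH$.
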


\begin{proof}
  Observe that every operator in $\cT(\cH)$ is at least densely defined on $\cK$.
  Let $D$ be the diagonal operator on $\cH$ with diagonal
  $(||e_n||^{-1}_{\cH})$. Then $D$ extends to a unitary operator $\cK \to \cH$.
  Thus, if $[T_{i j}] \in M_r(\cT(\cH))$, then by Lemma \ref{lem:Kacnelson},
  \begin{equation*}
    ||[T_{i j}]||_{\cB(\cK^r)} = ||[ D T_{i j} D^{-1} ]||_{\cB(\cH^r)}
    \le ||[T_{i j}]||_{\cB(\cH^r)}.
  \end{equation*}
  This shows that $\cT(\cH) \subseteq \cT(\cK)$ completely contractively.
\end{proof}

Let $\cH$ be a reproducing kernel Hilbert space on $\bD$ with a reproducing
kernel of the form
\begin{equation*}
  k_w(z) = \sum_{n=0}^\infty a_n z \ol{w}^n,
\end{equation*}
where $a_n > 0$ for all $n \in \bN_0$. Then
\begin{equation*}
  ||z||^2_{\Mult(\cH)} = \sup_{n \in \bN_0} \frac{a_n}{a_{n+1}}.
\end{equation*}
This motivates the condition in the following result.

\begin{prop}
  \label{prop:multiplier_inclusion}
  Let $\cH$ and $\cK$ be two reproducing kernel Hilbert spaces on $\bB_d$, $d \in \bN$,
  with reproducing kernels
  $k_w(z) = \sum_{n=0}^\infty a_n \langle z, w \rangle^n$
  and
  $\ell_w(z) = \sum_{n=0}^\infty b_n \langle z, w \rangle^n$,
  respectively. Assume that $a_n,b_n > 0$ for all $n \in \bN_0$. If
  \begin{equation*}
    \frac{b_n}{b_{n+1}} \le \frac{a_n}{a_{n+1}} \quad \text{ for all } n \in \bN_0,
  \end{equation*}
  then $\Mult(\cH) \subseteq\Mult(\cK)$, and the inclusion is a complete contraction.
\end{prop}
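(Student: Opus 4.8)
The plan is to realise the common monomial basis of $\cH$ and $\cK$ as a single orthonormal/orthogonal system and then invoke Corollary~\ref{cor:kacnelson}. Since both kernels are unitarily invariant with $a_n,b_n>0$, the monomials $\{z^\alpha : \alpha \in \bN_0^d\}$ form an orthogonal basis of each space, and the multinomial expansion $\langle z,w\rangle^n = \sum_{|\alpha|=n}\frac{n!}{\alpha!}z^\alpha\ol{w}^\alpha$ gives
\begin{equation*}
  \|z^\alpha\|_\cH^2 = \frac{\alpha!}{|\alpha|!\, a_{|\alpha|}}, \qquad \|z^\alpha\|_\cK^2 = \frac{\alpha!}{|\alpha|!\, b_{|\alpha|}},
\end{equation*}
so that $\|z^\alpha\|_\cH^2/\|z^\alpha\|_\cK^2 = b_{|\alpha|}/a_{|\alpha|}$ depends only on $|\alpha|$. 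First I would enumerate the monomials as $(f_j)_{j\ge 0}$ in order of nondecreasing total degree (breaking ties arbitrarily inside a fixed degree) and set $e_j = f_j/\|f_j\|_\cK$, an orthonormal basis of $\cK$ which is an orthogonal basis of $\cH$ with $\|e_j\|_\cH^2 = b_{|\alpha_j|}/a_{|\alpha_j|}$.

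The key point is that the hypothesis $\frac{b_n}{b_{n+1}}\le \frac{a_n}{a_{n+1}}$ is, after cross-multiplication, exactly $\frac{b_n}{a_n}\le \frac{b_{n+1}}{a_{n+1}}$, i.e.\ the sequence $(b_n/a_n)$ is nondecreasing. Because $\|e_j\|_\cH^2 = b_{|\alpha_j|}/a_{|\alpha_j|}$ is constant within each degree and the enumeration has nondecreasing degree, this makes $(\|e_j\|_\cH)_j$ nondecreasing; the same inequality also gives $\|z^\alpha\|_\cK^2 \le (a_0/b_0)\|z^\alpha\|_\cH^2$, whence $\cH \subseteq \cK$ as vector spaces. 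Thus the hypotheses of Corollary~\ref{cor:kacnelson} are met and it yields $\cT(\cH)\subseteq \cT(\cK)$ as a complete contraction.

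It then remains to connect multipliers with triangular operators. Writing $\varphi = \sum_\beta \hat\varphi_\beta z^\beta$, one has $\varphi z^{\alpha_j} = \sum_\beta \hat\varphi_\beta z^{\alpha_j+\beta}$, whose terms all have degree $\ge |\alpha_j|$; in the degree-ordered enumeration this means $M_\varphi$ carries $f_j$ into $\overline{\operatorname{span}}\{f_k : k\ge j\}$, so $M_\varphi \in \cT(\cH)$ for every $\varphi\in\Mult(\cH)$, and matrix multipliers lie in $M_r(\cT(\cH)) = \cT(\cH(\bC^r))$. Applying Corollary~\ref{cor:kacnelson} produces, for each such $\varphi$, an operator $T\in\cT(\cK)$ with $\|T\|_{\cB(\cK)}\le \|M_\varphi\|_{\cB(\cH)}$, completely so. Finally I would identify $T$ with multiplication by $\varphi$ on $\cK$: the operator $T$ agrees with $M_\varphi$ on the dense subspace of polynomials, and since norm convergence in a reproducing kernel space forces pointwise convergence, passing to limits gives $Tf = \varphi f$ for all $f\in\cK$. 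Hence $\varphi\in\Mult(\cK)$ and the inclusion $\Mult(\cH)\subseteq\Mult(\cK)$ is a complete contraction.

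The routine parts are the monomial-norm computation and the density argument identifying $T$ with $M_\varphi$. The only genuinely load-bearing step is the choice of the degree-ordered enumeration, which must simultaneously make $M_\varphi$ lower triangular and $(\|e_j\|_\cH)$ monotone, together with the observation that the single scalar inequality $\frac{b_n}{b_{n+1}}\le\frac{a_n}{a_{n+1}}$ encodes precisely this monotonicity; everything deeper, namely that diagonal conjugation of triangular operators is completely contractive, has already been isolated in Lemma~\ref{lem:Kacnelson} and Corollary~\ref{cor:kacnelson}.
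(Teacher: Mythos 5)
Your proposal is correct and follows essentially the same route as the paper's own proof: degree-ordered monomial bases, the computation $\|z^\alpha\|_\cH^2/\|z^\alpha\|_\cK^2 = b_{|\alpha|}/a_{|\alpha|}$, the observation that the hypothesis makes $(b_n/a_n)$ nondecreasing, and then Corollary~\ref{cor:kacnelson} applied to the lower triangular multiplication operators. The only cosmetic remark is that your identification $M_r(\cT(\cH)) = \cT(\cH(\bC^r))$ is unnecessary (and not literally true for the concatenated basis ordering), since Corollary~\ref{cor:kacnelson} already asserts the inclusion $\cT(\cH)\subseteq\cT(\cK)$ is \emph{completely} contractive, which handles the matrix levels directly.
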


\begin{proof}
  Observe that $\cH$ and $\cK$ each have orthonormal bases consisting of monomials.
  If we order the monomials such that their degrees are nondecreasing, then every multiplication
  operator on $\cH$ is lower triangular with respect to such an orthonormal basis.
  Moreover, if $p$ is a monomial of degree $n$ with $||p||_{\cK} = 1$, then
  \begin{equation*}
    ||p||_{\cH} = \sqrt{\frac{b_n}{a_n}}.
  \end{equation*}
  The assumption implies that the sequence $\sqrt{b_n / a_n}$ is nondecreasing.
  In particular, there exists a constant $C > 0$ such that $a_n \le C b_n$,
  so that $\cH$ is densely contained in $\cK$ and every multiplication operator
  on $\cH$ is at least densely defined on $\cK$.
  An application of Corollary \ref{cor:kacnelson}
  now shows that every multiplication operator on $\cH$ is bounded on $\cK$, and hence
  a bounded multiplication operator, and that the inclusion
  $\Mult(\cH) \subseteq\Mult(\cK)$ is a complete contraction.
\end{proof}

We obtain the following consequence for multiplier algebras of weighted Besov spaces.

\begin{corollary} \label{unrestricted} Let $\om$ be a radial weight in $\Bd$ and
  let $s, t \in \R$ with $t \le s$. Then
  \begin{equation*}
  \Mult(B^s_\om)\subseteq \Mult(B^t_\om)
  \end{equation*}
  and the inclusion is a complete contraction. In particular,
  \begin{equation*}
    \Mult(B^s_\om,B^s_\om(\ell_2))\subseteq \Mult(B^t_\om, B^t_\om(\ell_2))
  \end{equation*}
  and the inclusion is a contraction.
\end{corollary}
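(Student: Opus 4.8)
The plan is to reduce the statement to Proposition \ref{prop:multiplier_inclusion} by comparing the kernel coefficients of $B^s_\om$ and $B^t_\om$. Recall from Section \ref{subsec:basics} that the reproducing kernel of $B^r_\om$ has the form $\sum_{n\ge 0} c_n^{(r)} \langle z,w\rangle^n$, with $c_0^{(r)} = a_0(\om)^{-1}$ and, for $n \ge 1$,
$$c_n^{(r)} = n^{-2r} a_n(\om)^{-1} \|z_1^n\|^{-2}_{H^2(\dB)}.$$
These coefficients are strictly positive because $a_n(\om) > 0$ by \eqref{eqjm2}. First I would fix $s \ge t$ and set $\cH = B^s_\om$, $\cK = B^t_\om$, so that the hypothesis of Proposition \ref{prop:multiplier_inclusion} becomes $c_n^{(t)}/c_{n+1}^{(t)} \le c_n^{(s)}/c_{n+1}^{(s)}$ for all $n$.

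The key observation is that the dependence on the index $r$ enters the coefficients only through the factor $n^{-2r}$, while $a_n(\om)^{-1}$ and $\|z_1^n\|^{-2}_{H^2(\dB)}$ are independent of $r$. Consequently, for $n \ge 1$ the ratio of consecutive coefficients factors as
$$\frac{c_n^{(r)}}{c_{n+1}^{(r)}} = \Big(\frac{n+1}{n}\Big)^{2r} \cdot \frac{a_{n+1}(\om)}{a_n(\om)} \cdot \frac{\|z_1^{n+1}\|^2_{H^2(\dB)}}{\|z_1^n\|^2_{H^2(\dB)}},$$
so the quotient of the two ratios is simply $\big((n+1)/n\big)^{2(s-t)}$. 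Since $s - t \ge 0$ and $(n+1)/n \ge 1$, this quotient is at least $1$, giving $c_n^{(t)}/c_{n+1}^{(t)} \le c_n^{(s)}/c_{n+1}^{(s)}$ for $n \ge 1$. The case $n = 0$ I would treat separately: since $1^{-2r} = 1$, both $c_0^{(r)}$ and $c_1^{(r)}$ are independent of $r$, so the inequality there holds with equality. This verifies the hypothesis of Proposition \ref{prop:multiplier_inclusion} and yields $\Mult(B^s_\om) \subseteq \Mult(B^t_\om)$ as a complete contraction.

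For the final assertion about column multipliers I would invoke the operator-space meaning of complete contractivity: each amplification $M_n(\Mult(B^s_\om)) \to M_n(\Mult(B^t_\om))$ is contractive. A column multiplier in $\Mult(B^s_\om, B^s_\om(\ell_2))$ is represented by an infinite column $(\varphi_1, \varphi_2, \ldots)^T$; truncating to the first $n$ entries and embedding it into $M_n(\Mult(B^s_\om))$ as a single column shows that the $B^t_\om$-column norm of the truncation is dominated by its $B^s_\om$-column norm. Letting $n \to \infty$, and using that the truncated column norms increase to the full column norm, gives the contractive inclusion $\Mult(B^s_\om, B^s_\om(\ell_2)) \subseteq \Mult(B^t_\om, B^t_\om(\ell_2))$.

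I do not expect a genuine obstacle here: the corollary is essentially immediate once the coefficient ratios are compared. The only points requiring a little care are the degenerate $n = 0$ term, where the weight $n^{-2r}$ must be handled by hand, and the passage from complete contractivity to the $\ell_2$-valued (column) statement.
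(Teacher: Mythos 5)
Your proposal is correct and follows essentially the same route as the paper: both reduce to Proposition \ref{prop:multiplier_inclusion} by computing the kernel coefficients of $B^s_\om$ and $B^t_\om$ from \eqref{Bsom-norm}, and your consecutive-ratio inequality $c_n^{(t)}/c_{n+1}^{(t)} \le c_n^{(s)}/c_{n+1}^{(s)}$ is just a rewriting of the paper's observation that $a_n/b_n = n^{2(t-s)}$ is nonincreasing. Your truncation argument for passing from complete contractivity to the contractive inclusion of column multipliers is the standard one the paper leaves implicit, and it is carried out correctly.
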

In particular, by taking $s=n$ and $t= n-1$ for $n=1,2.., N$ we see that any weighted Besov space $\HH=B^N_\om$ associated with a radial weight satisfies the  multiplier inclusion condition \eqref{eqn:mult_inclusion}.
\begin{proof}
  We saw in Section \ref{section_weights_basics} that $B^s_{\omega}$ and $B_{\omega}^{t}$ have
  reproducing kernels of the form
  \begin{equation*}
    k_w(z) = \sum_{n=0}^\infty a_n \langle z,w \rangle^n \quad \text{ and } \quad
    \ell_w(z) = \sum_{n=0}^\infty b_n \langle z,w \rangle^n,
  \end{equation*}
  respectively, where $a_n = ||z_1^n||^{-2}_{B^s_{\omega}}$
  and $b_n = ||z_1^n||^{-2}_{B^t_{\omega}}$. From Equation \eqref{Bsom-norm}, we deduce that for $n \ge 1$,
  \begin{equation*}
    \frac{a_n}{b_{n}} = n^{2(t - s)}
  \end{equation*}
  and $a_0 / b_0 = 1$. Since $t \le s$, the sequence $(a_n / b_n)$ is nonincreasing, so that
  the result is a special case of Proposition \ref{prop:multiplier_inclusion}.
\end{proof}

It was shown in \cite[Theorem 1.5]{AlHaMcCRiWeakProd} that the multiplier inclusion condition \eqref{eqn:mult_inclusion} for $B_\omega^N$ implies that every
bounded column multiplication operator on $B^N_\om$ is also a bounded row multiplication operator. Moreover, by Theorem \ref{omxTheorem},
each Besov space $B^s_{\om}$ can also be regarded as a space of the form $B^N_{\widetilde \omega}$ for a suitable
 radial weight $\widetilde \omega$ and $N \in \bN$.
Thus, we obtain the following consequence.

\begin{corollary}
  \label{cor:BCBR_square_integer}
  Let $\omega$ be a radial weight in $\bB_d$ and let $s \in \mathbb R$. Then
  \begin{equation*}
    M^C(B_\omega^s) \subseteq M^R(B_\omega^s)
  \end{equation*}
  and the inclusion is continuous.
\end{corollary}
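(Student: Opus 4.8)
The plan is to reduce the statement for arbitrary real $s$ to the integer case, where the result of \cite[Theorem 1.5]{AlHaMcCRiWeakProd} can be applied directly. That theorem asserts precisely that the multiplier inclusion condition \eqref{eqn:mult_inclusion} for a space $B^N_{\widetilde\omega}$ with $N \in \bN$ forces every bounded column multiplier to be a bounded row multiplier, with control of the norms. So the entire task is to manufacture a presentation of $B^s_\om$ as such an integer-index space and to verify that the quantities $M^C$ and $M^R$ are insensitive to the change.

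First I would fix an integer $N \ge \max(s,0)$ and set $x = N - s \ge 0$. By Theorem \ref{omxTheorem}, $\widetilde\omega := \om_x$ is again a radial weight and $B^s_\om = B^{s+x}_{\om_x} = B^N_{\om_x}$ with equivalence of norms. Next I would check that $B^N_{\om_x}$ satisfies the multiplier inclusion condition \eqref{eqn:mult_inclusion}: applying Corollary \ref{unrestricted} to the radial weight $\om_x$ with the pairs $(s,t) = (n, n-1)$ for $n = 1, \ldots, N$ yields the chain
\begin{equation*}
  \Mult(B^N_{\om_x}, B^N_{\om_x}(\ell^2)) \subseteq \cdots \subseteq \Mult(B^0_{\om_x}, B^0_{\om_x}(\ell^2))
\end{equation*}
with continuous (indeed contractive) inclusions, which is exactly \eqref{eqn:mult_inclusion} for $B^N_{\om_x}$. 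This is the observation already recorded in the remark following Corollary \ref{unrestricted}. With the condition verified, \cite[Theorem 1.5]{AlHaMcCRiWeakProd} gives $M^C(B^N_{\om_x}) \subseteq M^R(B^N_{\om_x})$ continuously.

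Finally I would transfer this conclusion back to $B^s_\om$. Since $B^s_\om$ and $B^N_{\om_x}$ coincide as vector spaces with equivalent norms, they have the same multiplier algebra, and a column (respectively row) operator $\Phi^C$ (respectively $\Phi^R$) is bounded on one space if and only if it is bounded on the other, with operator norms comparable up to the constants relating the two norms. Hence $M^C(B^s_\om) = M^C(B^N_{\om_x})$ and $M^R(B^s_\om) = M^R(B^N_{\om_x})$ as sets, and the continuous inclusion for the integer-index space descends to the continuous inclusion $M^C(B^s_\om) \subseteq M^R(B^s_\om)$.

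I do not expect a genuine obstacle here, since all the substantive work has already been done in Theorem \ref{omxTheorem}, Corollary \ref{unrestricted}, and the cited \cite{AlHaMcCRiWeakProd}. The only points requiring care are bookkeeping: ensuring $N$ is chosen large enough that $x \ge 0$ (so that Theorem \ref{omxTheorem} applies and $\om_x$ is a legitimate radial weight, making all intermediate spaces $B^j_{\om_x}$ meaningful), and noting that the norm equivalence between $B^s_\om$ and $B^N_{\om_x}$ only affects the inclusion constant and not the inclusion itself. The essential structural input is that \cite[Theorem 1.5]{AlHaMcCRiWeakProd} is stated for integer index, which is exactly why the index-shift reduction via Theorem \ref{omxTheorem} is indispensable.
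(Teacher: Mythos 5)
Your proposal is correct and follows essentially the same route as the paper: the paper likewise invokes Theorem \ref{omxTheorem} to rewrite $B^s_\om$ as $B^N_{\widetilde\omega}$ for an integer $N$ and a radial weight $\widetilde\omega = \om_x$, verifies the multiplier inclusion condition \eqref{eqn:mult_inclusion} via Corollary \ref{unrestricted}, and then applies \cite[Theorem 1.5]{AlHaMcCRiWeakProd}. Your added bookkeeping about transferring $M^C$ and $M^R$ across the norm equivalence is exactly the (implicit) final step of the paper's argument.
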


We do not know if the inclusion in the preceding corollary is contractive, even in the case of the Drury--Arveson space. Even though Corollary \ref{unrestricted} shows
that the multiplier inclusion condition \eqref{eqn:mult_inclusion} holds with contractive inclusions, \cite[Theorem 1.5]{AlHaMcCRiWeakProd}
only yields boundedness of the inclusion $M^C(B_\om^N) \subseteq M^R(B_\om^N)$.

\subsection{Inclusion of multiplier spaces}\label{subsection3.2}

We also require a version of the preceding result for multipliers between different spaces.
Thus, we seek conditions that imply inclusions of the form $\Mult(\cH,\cH') \subseteq\Mult(\cK,\cK')$.
The proofs based on Kacnelson's lemma (Lemma \ref{lem:Kacnelson}) generalize to this setting. The results in
this subsection contain the results of the preceding subsection as a special case. But for the sake of readability, we chose
to treat inclusions of multiplier algebras first.

We begin with a version of Corollary \ref{cor:kacnelson} for four Hilbert spaces.
First of all, observe that if $\cH$ and $\cH'$ are Hilbert space, then $\cB(\cH,\cH')$
can be identified with a subspace of $\cB(\cH \oplus \cH')$, hence
the notion of a completely contractive map applies in this setting as well.
Equivalently, $M_r(\cB(\cH,\cH'))$ is normed by means of the identification with $\cB(\cH^r,(\cH')^r)$.
If $\cH$ and $\cH'$ are Hilbert spaces with orthogonal bases $(e_n)$ and
$(e_n')$, respectively, let $\cT(\cH,\cH') \subseteq\cB(\cH,\cH')$ denote the space of all
operators that are lower triangular with respect to $(e_n)$ and $(e_n')$. Thus,
an operator $T \in \cB(\cH,\cH')$ belongs to $\cT(\cH,\cH')$ if and only if
\begin{equation*}
  \langle T e_i, e_j' \rangle = 0 \quad \text{ whenever } j > i.
\end{equation*}

\begin{corollary}
  \label{cor:kacnelson_rectangular}
  Let $\cK$ and $\cK'$ be Hilbert spaces with orthonormal bases $(e_n)$ and $(e_n')$, respectively.
  Let $\cH$ and $\cH'$ be another pair of Hilbert spaces such that
  \begin{itemize}
    \item $\cH \subseteq\cK$ and $\cH' \subseteq\cK'$ as vector spaces,
    \item $(e_n)$ is an orthogonal basis for $\cH$ and $(e_n')$ is an orthogonal basis for $\cH'$,
    \item the sequence $( ||e_n||_{\cH})$ is nondecreasing, and
    \item $||e_n||_{\cH} \le ||e_n'||_{\cH'}$ for all $n \in \bN$.
  \end{itemize}
  Then $\cT(\cH,\cH') \subseteq\cT(\cK,\cK')$ and the inclusion is completely contractive.
\end{corollary}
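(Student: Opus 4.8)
The plan is to reduce the statement to Kacnelson's lemma (Lemma \ref{lem:Kacnelson}) together with an elementary diagonal contraction, exactly as Corollary \ref{cor:kacnelson} reduces to Lemma \ref{lem:Kacnelson}, but now keeping track of two independent rescalings. First I would fix $T \in \cT(\cH,\cH')$ and record its matrix. Using the orthonormal bases $\hat e_n = e_n / \|e_n\|_\cH$ of $\cH$ and $\hat e_n' = e_n'/\|e_n'\|_{\cH'}$ of $\cH'$, let $M = [m_{ji}]$ be the matrix of $T$, so that $\|M\|_{\cB(\ell^2)} = \|T\|_{\cB(\cH,\cH')}$ and, since the bases are orthonormal, the $r$-fold amplifications have equal norms as well. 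Writing $T e_i = \sum_j c_{ji} e_j'$ for the raw expansion coefficients, one has $m_{ji} = \frac{\|e_j'\|_{\cH'}}{\|e_i\|_\cH} c_{ji}$, and $M$ is lower triangular because $c_{ji} = 0$ for $j > i$.

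Next I would identify the matrix of $T$ regarded as an operator $\cK \to \cK'$. Since $(e_n)$ and $(e_n')$ are orthonormal in $\cK$ and $\cK'$, this matrix is simply $N = [c_{ji}]$. Introducing the (possibly unbounded) diagonal operators $E = \diag(\|e_n\|_\cH)$ and $E' = \diag(\|e_n'\|_{\cH'})$ on $\ell^2$, the relation between the entries gives $N = (E')^{-1} M E$, which I would factor as
$$ N = \big[ (E')^{-1} E\big]\,\big[ E^{-1} M E \big]. $$
The point of this factorization is that each bracket is a completely contractive operation. For the second bracket, $D := E^{-1} = \diag(\|e_n\|_\cH^{-1})$ is bounded with nonincreasing strictly positive diagonal (here the monotonicity hypothesis on $(\|e_n\|_\cH)$ is used), so Lemma \ref{lem:Kacnelson} applies verbatim and shows that $E^{-1} M E = D M D^{-1}$ is bounded and that $M \mapsto D M D^{-1}$ is completely contractive. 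For the first bracket, $(E')^{-1} E = \diag(\|e_n\|_\cH / \|e_n'\|_{\cH'})$ has all diagonal entries in $(0,1]$ by the hypothesis $\|e_n\|_\cH \le \|e_n'\|_{\cH'}$, hence is a contraction, and left multiplication by a fixed contraction is completely contractive.

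Combining the two, the map $T \mapsto N$ is a composition of completely contractive maps, hence completely contractive; in particular $\|N\| \le \|M\|$, so $N$ defines a bounded operator $\cK \to \cK'$ of norm at most $\|T\|_{\cB(\cH,\cH')}$. Since $N$ has the same (lower-triangular) support as $M$ and agrees with $T$ on the span of $(e_n)$, which is contained in $\cH$ and dense in $\cK$, this operator is precisely the inclusion of $T$ into $\cT(\cK,\cK')$, which proves the claim. The one point requiring care, and the only real obstacle, is the presence of the unbounded diagonals $E, E'$: I would justify the displayed factorization at the level of matrix entries (both $E^{-1} M E$ and $(E')^{-1} E$ being bounded, the former by Lemma \ref{lem:Kacnelson}), rather than manipulating $E$ directly, and I would handle the amplifications by tensoring each bracket with $I_r$ before invoking the completely contractive estimates.
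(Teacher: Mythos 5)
Your proof is correct and is essentially the paper's argument written in matrix form: both reduce to Kacnelson's Lemma \ref{lem:Kacnelson} applied with the nonincreasing diagonal $(\|e_n\|_{\cH}^{-1})$ and then peel off the diagonal contraction with entries $\|e_n\|_{\cH}/\|e_n'\|_{\cH'}$ --- the paper's operator $S$ is exactly your $(E')^{-1}E$. The paper phrases this via the unitaries $D:\cK\to\cH$, $D':\cK'\to\cH'$ and $U:\cH'\to\cH$ and a single estimate chain rather than your explicit factorization $N=\bigl[(E')^{-1}E\bigr]\bigl[E^{-1}ME\bigr]$, but the two ingredients and the handling of the unbounded diagonals are the same.
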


\begin{proof}
  Every operator in $\cT(\cH,\cH')$ is at least a densely defined operator from $\cK$ to $\cK'$.
  Our goal is to show that these operators are bounded.

  In the proof, we will require the following  diagonal operators.
  Let $D$ be the diagonal operator on $\cH$ with diagonal $(||e_n||_{\cH}^{-1})$. Similarly,
  let $D'$ be the diagonal operator on $\cH'$ with diagonal $(||e_n'||_{\cH'}^{-1})$.
  Observe that $D$ extends to a unitary operator from $\cK$ to $\cH$ and $D'$ extends
  to a unitary operator from $\cK'$ to $\cH'$.
  Moreover, let $U \in \cB(\cH',\cH)$ be the unique unitary operator with
  \begin{equation*}
    U e_n' = \frac{||e_n'||_{\cH'}}{||e_n||_{\cH}} e_n \quad (n \in \bN).
  \end{equation*}

  Suppose now that
  $[T_{i j}] \in M_r(\cT(\cH,\cH'))$. Then by Lemma \ref{lem:Kacnelson}, we find that
  \begin{align*}
    \| [T_{i j}] \|_{\cB(\cK^r,(\cK')^r)}
    = \| [U D' T_{i j} D^{-1}] \|_{\cB(\cH^r)}
    &= \| [D D^{-1} U D' T_{i j} D^{-1}] \|_{\cB(\cH^r)} \\
    &\le \| [D^{-1} U D' T_{i j}] \|_{\cB(\cH^r)}.
  \end{align*}
  Observe that
  \begin{equation*}
    D^{-1} U D' e_n' = e_n = U S e_n' \quad (n \in \bN),
  \end{equation*}
  where $S$ is the diagonal operator on $\cH'$ with diagonal $( \frac{||e_n||_{\cH}}{||e_n'||_{\cH'}})$. By
  assumption, this operator is a contraction. From the estimate
  above and the identity $D^{-1} U D' = U S$, we infer that
  \begin{align*}
    \| [T_{i j}] \|_{\cB(\cK^r,(\cK')^r)}
    \le \| [U S T_{i j}] \|_{\cB(\cH^r)}
    &= \| [ S T_{i j}] \|_{\cB(\cH^r, (\cH')^r)} \\
    &\le || [T_{i j}]||_{\cB(\cH^r, (\cH')^r)},
  \end{align*}
  which finishes the proof.
\end{proof}

The following result is a generalization of Proposition \ref{prop:multiplier_inclusion}.

\begin{prop}
  \label{prop:rectangular_multiplier_inclusion}
  Let $d \in \bN$ and let $\cH,\cH',\cK,\cK'$ be reproducing kernel Hilbert spaces on $\bB_d$
  with respective reproducing kernels
  $k_w(z) = \sum_{n=0}^\infty a_n \langle z, w \rangle^n$,
  $k'_w(z) = \sum_{n=0}^\infty a_n' \langle z, w \rangle^n$,
  $\ell_w(z) = \sum_{n=0}^\infty b_n \langle z, w \rangle^n$ and
  $\ell'_w(z) = \sum_{n=0}^\infty b_n' \langle z, w \rangle^n$.
  Suppose that for all $n \in \bN_0$, the inequalities $a_n,a_n',b_n,b_n' > 0$ and
  \begin{equation*}
    \frac{b_n}{b_{n+1}} \le \frac{a_n}{a_{n+1}}
  \end{equation*}
  and
  \begin{equation*}
    \frac{b_n}{a_n} \le \frac{b_n'}{a_n'}
  \end{equation*}
  hold.
  Then
  \begin{equation*}
    \Mult(\cH,\cH') \subseteq\Mult(\cK,\cK'),
  \end{equation*}
  and the inclusion is completely contractive.
\end{prop}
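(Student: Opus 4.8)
The plan is to follow the template of the proof of Proposition \ref{prop:multiplier_inclusion}, feeding the four-space version of Kacnelson's lemma, Corollary \ref{cor:kacnelson_rectangular}, in place of the two-space Corollary \ref{cor:kacnelson}. Since each kernel is a function of $\langle z,w\rangle$, I first expand $\langle z,w\rangle^n = \sum_{|\alpha|=n} \frac{|\alpha|!}{\alpha!} z^\alpha \overline w^\alpha$ to see that the monomials are pairwise orthogonal in each of the four spaces and that, writing $n=|\alpha|$,
\begin{equation*}
  \|z^\alpha\|^2_{\cH} = \frac{\alpha!}{n!\,a_n}, \quad
  \|z^\alpha\|^2_{\cH'} = \frac{\alpha!}{n!\,a_n'}, \quad
  \|z^\alpha\|^2_{\cK} = \frac{\alpha!}{n!\,b_n}, \quad
  \|z^\alpha\|^2_{\cK'} = \frac{\alpha!}{n!\,b_n'}.
\end{equation*}
I then enumerate the multi-indices as $\alpha_0,\alpha_1,\ldots$ in order of nondecreasing degree, set $n(j)=|\alpha_j|$, and let $e_j = z^{\alpha_j}/\|z^{\alpha_j}\|_{\cK}$ and $e_j' = z^{\alpha_j}/\|z^{\alpha_j}\|_{\cK'}$ be the induced orthonormal bases of $\cK$ and $\cK'$; the same monomials are orthogonal bases of $\cH$ and $\cH'$.

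Next I establish the vector-space inclusions. The hypothesis $\frac{b_n}{b_{n+1}} \le \frac{a_n}{a_{n+1}}$ is equivalent to $\frac{b_n}{a_n} \le \frac{b_{n+1}}{a_{n+1}}$, so $b_n/a_n$ is nondecreasing and hence $\ge b_0/a_0 > 0$; together with the hypothesis $\frac{b_n}{a_n} \le \frac{b_n'}{a_n'}$ this forces $b_n'/a_n' \ge b_0/a_0 > 0$ as well. Since the factor $\frac{\alpha!}{n!}$ is common to $\cH$ and $\cK$ (respectively $\cH'$ and $\cK'$), these bounds give $\|f\|_{\cK} \le \sqrt{a_0/b_0}\,\|f\|_{\cH}$ on monomials and hence on all of $\cH$, so $\cH \subseteq \cK$ densely, and likewise $\cH' \subseteq \cK'$ densely. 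Any $\varphi \in \Mult(\cH,\cH')$ satisfies $M_\varphi z^\alpha = \sum_\beta c_\beta z^{\alpha+\beta}$, supported on monomials of degree $\ge |\alpha|$, so with respect to the degree ordering $M_\varphi$ is lower triangular, i.e. $M_\varphi \in \cT(\cH,\cH')$.

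Now I verify the four conditions of Corollary \ref{cor:kacnelson_rectangular}. The factors $\frac{\alpha_j!}{n(j)!}$ cancel in the ratios, giving $\|e_j\|^2_{\cH} = b_{n(j)}/a_{n(j)}$ and $\|e_j'\|^2_{\cH'} = b_{n(j)}'/a_{n(j)}'$. Monotonicity of $b_n/a_n$ combined with the nondecreasing degree ordering makes $(\|e_j\|_{\cH})$ nondecreasing, and the hypothesis $\frac{b_n}{a_n} \le \frac{b_n'}{a_n'}$ yields $\|e_j\|_{\cH} \le \|e_j'\|_{\cH'}$ for all $j$; the two inclusion and orthogonal-basis conditions were checked above. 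Corollary \ref{cor:kacnelson_rectangular} therefore gives $\cT(\cH,\cH') \subseteq \cT(\cK,\cK')$ completely contractively. In particular $M_\varphi$ extends to a bounded operator $\cK \to \cK'$; a standard point-evaluation approximation (using that polynomials are dense and point evaluations are bounded) identifies the extension as multiplication by $\varphi$, so $\varphi \in \Mult(\cK,\cK')$ with the correct norm bound.

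For complete contractivity I amplify: a matrix $[\varphi_{ij}] \in M_r(\Mult(\cH,\cH'))$ corresponds to the block operator $[M_{\varphi_{ij}}] \in M_r(\cT(\cH,\cH'))$, whose $\cB(\cK^r,(\cK')^r)$- and $\cB(\cH^r,(\cH')^r)$-norms are exactly the $M_r(\Mult(\cK,\cK'))$- and $M_r(\Mult(\cH,\cH'))$-norms under the identification $M_r(\Mult(\cH,\cH')) = \Mult(\cH(\bC^r),\cH'(\bC^r))$; the complete contractivity already built into Corollary \ref{cor:kacnelson_rectangular} then delivers the bound at every matrix level. I expect the only delicate points to be purely bookkeeping: confirming that the two numerical hypotheses line up precisely with the four conditions of the corollary (especially the cancellation of the $\frac{\alpha!}{n!}$ factors, which is what makes the relevant norm ratios depend only on the degree), and checking that the abstract bounded extension really is a multiplication operator. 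There is no deeper obstacle, since Corollary \ref{cor:kacnelson_rectangular} carries out the analytic work.
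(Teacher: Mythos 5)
Your proof is correct and takes essentially the same route as the paper's: both order the monomial bases by nondecreasing degree, observe that the $\alpha!/n!$ factors cancel so that $\|e_j\|^2_{\cH}=b_{n(j)}/a_{n(j)}$ and $\|e_j'\|^2_{\cH'}=b_{n(j)}'/a_{n(j)}'$, and translate the two numerical hypotheses directly into the four conditions of Corollary \ref{cor:kacnelson_rectangular}. You merely make explicit several details the paper leaves implicit (the multinomial expansion, the density of $\cH$ in $\cK$, and the identification of the bounded extension as a multiplication operator via point evaluations), all of which are handled correctly.
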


\begin{proof}
  This follows as in the proof of Proposition \ref{prop:multiplier_inclusion} from an application of Corollary \ref{cor:kacnelson_rectangular}.
  Indeed, all four spaces have an orthogonal basis of monomials and if we order the monomials such that their degrees
  are nondecreasing, then every operator in $\Mult(\cH,\cH')$ is lower triangular. Moreover, if $p$
  is a monomial of degree $n$, then
  \begin{equation*}
    ||p||^2_{\cH} = \frac{b_n}{a_n} ||p||^2_{\cK} \quad \text{ and }
    ||p||^2_{\cH'} = \frac{b_n'}{a_n'} ||p||^2_{\cK'},
  \end{equation*}
  from which it readily follows that the last two conditions in Corollary \ref{cor:kacnelson_rectangular} hold.
  Finally, the assumptions imply that both sequences $(\frac{a_n}{b_n})$ and $(\frac{a_n'}{b_n'})$ are bounded above,
  so that $\cH \subseteq\cK$ and $\cH' \subseteq\cK'$.
\end{proof}

The last result applies in particular to the spaces $B^s_{\omega}$.

\begin{corollary}
  \label{cor:Besov_rectangular_multiplier_inclusion}
  Let $\omega$ and $\nu$ be  radial weights in $\bB_d$ and let $s,t,s',t' \in \mathbb R$ with $t \le s$ and $t'-s' \le t-s$.
Then
\begin{equation*}
  \Mult(B^s_\om, B^{s'}_{\nu}) \subseteq \Mult(B^{t}_\om, B^{t'}_{\nu})
\end{equation*}
and the inclusion is completely contractive. In particular,
\begin{equation*}
  \Mult(B^s_\om, B^{s}_{\nu}(\ell_2)) \subseteq \Mult(B^{t}_\om, B^{t}_{\nu}(\ell_2))
\end{equation*}
and the inclusion is contractive.
\end{corollary}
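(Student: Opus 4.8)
The plan is to apply Proposition~\ref{prop:rectangular_multiplier_inclusion} with the four spaces taken to be $\cH = B^s_\om$, $\cH' = B^{s'}_\nu$, $\cK = B^t_\om$ and $\cK' = B^{t'}_\nu$, so that the inclusion $\Mult(\cH,\cH') \subseteq \Mult(\cK,\cK')$ supplied by that proposition is exactly the asserted inclusion $\Mult(B^s_\om, B^{s'}_\nu) \subseteq \Mult(B^t_\om, B^{t'}_\nu)$. Everything then reduces to checking the two coefficient inequalities in the hypothesis of the proposition. First I would record the kernel coefficients: by the formula recalled in Section~\ref{subsec:basics}, the coefficient of $\langle z,w\rangle^n$ in the reproducing kernel of $B^r_\mu$ equals $\|z_1^n\|^{-2}_{B^r_\mu} = n^{-2r} a_n(\mu)^{-1} \|z_1^n\|^{-2}_{H^2(\dB)}$ for $n\ge 1$ (and $a_0(\mu)^{-1}$ for $n=0$). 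Writing $\gamma_n = \|z_1^n\|^{-2}_{H^2(\dB)}$, a factor depending only on $n$ and $d$, the four coefficient sequences are $a_n = \gamma_n n^{-2s} a_n(\om)^{-1}$, $a_n' = \gamma_n n^{-2s'} a_n(\nu)^{-1}$, $b_n = \gamma_n n^{-2t} a_n(\om)^{-1}$ and $b_n' = \gamma_n n^{-2t'} a_n(\nu)^{-1}$, all strictly positive since $\om,\nu$ are weights.

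Next I would verify the two required inequalities. Since $\cH$ and $\cK$ use the same weight $\om$, in the ratio $\frac{b_n/b_{n+1}}{a_n/a_{n+1}}$ the factors $\gamma_n$ and the moments $a_n(\om)$ cancel, leaving $\bigl(\tfrac{n+1}{n}\bigr)^{2t-2s}$; because $t \le s$ this is at most $1$, giving $\frac{b_n}{b_{n+1}} \le \frac{a_n}{a_{n+1}}$. For the second inequality the moment factors cancel within each pair, so $\frac{b_n}{a_n} = n^{2(s-t)}$ and $\frac{b_n'}{a_n'} = n^{2(s'-t')}$; since the hypothesis $t'-s' \le t-s$ is equivalent to $s-t \le s'-t'$, we obtain $n^{2(s-t)} \le n^{2(s'-t')}$ for all $n \ge 1$, that is, $\frac{b_n}{a_n} \le \frac{b_n'}{a_n'}$. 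The boundary cases $n=0,1$ are immediate, since the $n^{-2r}$ factors equal $1$ there and the $a_0$-coefficients coincide for $\om$ (and for $\nu$). Proposition~\ref{prop:rectangular_multiplier_inclusion} then yields the inclusion together with its complete contractivity.

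For the ``in particular'' statement I would specialize to $s'=s$ and $t'=t$, for which $t'-s'\le t-s$ holds with equality, obtaining that $\Mult(B^s_\om,B^s_\nu)\subseteq\Mult(B^t_\om,B^t_\nu)$ is completely contractive. The $\ell_2$-valued inclusion then follows by amplification: using the identification of $M_r(\Mult(\cdot,\cdot))$ with operator-valued multiplier spaces recalled before Corollary~\ref{cor:kacnelson_rectangular}, a completely contractive map is in particular contractive on columns $M_{r,1}$; truncating an $\ell_2$-valued (column) multiplier to its first $r$ entries and letting $r \to \infty$ produces the contractive inclusion $\Mult(B^s_\om, B^s_\nu(\ell_2)) \subseteq \Mult(B^t_\om, B^t_\nu(\ell_2))$.

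The computation itself is routine, so the only places demanding care are the bookkeeping that makes the two coefficient conditions collapse exactly onto the hypotheses $t\le s$ and $t'-s'\le t-s$ (in particular, noticing that the weight-dependent moments cancel entirely, so that $\om$ and $\nu$ being possibly different weights plays no role in the inequalities), and the passage from the completely contractive scalar inclusion to the contractive $\ell_2$-valued one.
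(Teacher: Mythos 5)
Your proof is correct and takes essentially the same route as the paper: both apply Proposition~\ref{prop:rectangular_multiplier_inclusion} with $\cH = B^s_\om$, $\cH' = B^{s'}_\nu$, $\cK = B^t_\om$, $\cK' = B^{t'}_\nu$, and reduce the two hypotheses to the ratios $b_n/a_n = n^{2(s-t)}$ and $b_n'/a_n' = n^{2(s'-t')}$ after the weight moments cancel, exactly as in the paper's proof (which cites the computation from Corollary~\ref{unrestricted}). Your explicit verification of the first inequality and the truncation argument passing from complete contractivity to the contractive $\ell_2$-valued inclusion are both sound; the paper simply leaves these details implicit.
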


\begin{proof}
  We apply Proposition \ref{prop:rectangular_multiplier_inclusion} with $\cH = B^s_{\omega},
  \cH'= B^{s'}_{\nu},\cK = B^{t}_{\omega}$ and $\cK' = B^{t'}_{\nu}$.
  With notation
  as in that Proposition, the argument in the proof of Corollary \ref{unrestricted} shows that
  \begin{equation*}
    \frac{a_n}{b_n} = n^{2(t-s)}
  \end{equation*}
  for $n \ge 1$ and $a_0 / b_0 = 1$, so the sequence $(a_n/ b_n)$ is nonincreasing as $t \le s$.
  Similarly,
  \begin{equation*}
    \frac{a_n'}{b_n'} = n^{2 (t'-s')}
  \end{equation*}
  for $n \ge 1$ and $a_0' / b_0' = 1$. Since $t' - s' \le t - s$, we conclude that $a_n' / b_n' \le a_n / b_n$ for all $n \in \bN_0$,
  so the result is a special case of Proposition \ref{prop:rectangular_multiplier_inclusion}.
\end{proof}

We also obtain a multiplier space version of Corollary \ref{cor:BCBR_square_integer}.

\begin{theorem}
  \label{thm:BCBR_general}
  Let $\om$ and $\nu$ be  radial weights in $\bB_d$, and let $s, t \in \R$. Then
  \begin{equation*}
    M^C(B^s_\om, B^t_\nu) \subseteq M^R(B^s_\om,B^t_\nu)
  \end{equation*}
  and the inclusion is continuous.
\end{theorem}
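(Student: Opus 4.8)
The plan is to reduce the theorem to the rectangular multiplier inclusions already in hand (Theorem \ref{thm:mult_inclusion_intro}) by an induction that peels one radial derivative off the \emph{target} space at a time. The engine is the elementary identity
$$\|f\|^2_{B^s_\om} = a_0(\om)\,|f(0)|^2 + \|Rf\|^2_{B^{s-1}_\om},$$
valid for every real $s$, which is immediate from \eqref{Bsom-norm} together with $Rf = \sum_{n\ge 1} n f_n$; in particular $\|Rf\|_{B^{s-1}_\om}\le \|f\|_{B^s_\om}$. Since $\Mult$, $M^C$ and $M^R$ depend on the spaces only up to equivalence of norms, I would first use Theorem \ref{omxTheorem} to replace the target $B^t_\nu$ by an equal space $B^M_\nu$ with $M\in\mathbb N_0$ (absorbing the weight change into $\nu$). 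It then suffices to prove, for every real $s$, all radial weights, and every integer $M\ge 0$, that $M^C(B^s_\om,B^M_\nu)\subseteq M^R(B^s_\om,B^M_\nu)$ with continuous inclusion, which I would establish by induction on $M$.

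For the base case $M=0$ the target is the genuine Bergman space $B^0_\nu=L^2_a(\nu)$, whose norm is the integral $\int_{\Bd}|f|^2\nu\,dV$. If $\Phi=(\varphi_j)\in M^C(B^s_\om,L^2_a(\nu))$, the column bound reads $\int_{\Bd}\big(\sum_i|\varphi_i|^2\big)|h|^2\nu\,dV\le C\|h\|^2_{B^s_\om}$ for all $h\in B^s_\om$. Given $(g_j)\in B^s_\om(\ell_2)$, a pointwise Cauchy--Schwarz estimate and Tonelli's theorem give
$$\int_{\Bd}\Big|\sum_j\varphi_j g_j\Big|^2\nu\,dV \le \int_{\Bd}\Big(\sum_i|\varphi_i|^2\Big)\Big(\sum_j|g_j|^2\Big)\nu\,dV = \sum_j\int_{\Bd}\Big(\sum_i|\varphi_i|^2\Big)|g_j|^2\nu\,dV,$$
and applying the column bound to each analytic $g_j$ separately bounds this by $C\sum_j\|g_j\|^2_{B^s_\om}$. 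Thus $\Phi\in M^R$. This is the one place where it is essential that the target be an honest Bergman space, so that the row bound decouples over the index $j$.

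For the inductive step $M\ge 1$, take $\Phi=(\varphi_j)\in M^C(B^s_\om,B^M_\nu)$ and $(g_j)\in B^s_\om(\ell_2)$, and put $f=\sum_j\varphi_j g_j$. By the identity above it suffices to bound $|f(0)|$ (routine: $\sum_j|\varphi_j(0)|^2<\infty$ by testing the column operator on constants, and $\sum_j|g_j(0)|^2\lesssim \|(g_j)\|^2$) and $\|Rf\|_{B^{M-1}_\nu}$. The product rule gives $Rf=\sum_j(R\varphi_j)g_j+\sum_j\varphi_j(Rg_j)$, and I would treat the two sums separately in $B^{M-1}_\nu$. For the second sum one has $\sum_j\|Rg_j\|^2_{B^{s-1}_\om}\le\|(g_j)\|^2$, and Theorem \ref{thm:mult_inclusion_intro} (with $\cE=\C$, $\cF=\ell_2$, lowering both indices by $1$, which is permitted since $-1\le -1$) yields $\Phi\in M^C(B^{s-1}_\om,B^{M-1}_\nu)$; the inductive hypothesis then makes $\Phi$ a bounded row multiplier $B^{s-1}_\om(\ell_2)\to B^{M-1}_\nu$, so $\|\sum_j\varphi_j(Rg_j)\|_{B^{M-1}_\nu}\lesssim\|(g_j)\|$.

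The first sum is the crux. I would show that the \emph{differentiated} sequence $(R\varphi_j)$ again lies in $M^C(B^s_\om,B^{M-1}_\nu)$: writing $(R\varphi_j)h=R(\varphi_j h)-\varphi_j(Rh)$, the $R(\varphi_j h)$ part is controlled by $\sum_j\|\varphi_j h\|^2_{B^M_\nu}\le C\|h\|^2_{B^s_\om}$ (column bound and $\|R\,\cdot\,\|_{B^{M-1}_\nu}\le\|\cdot\|_{B^M_\nu}$), while the $\varphi_j(Rh)$ part is controlled using $Rh\in B^{s-1}_\om$ and the same inclusion $\Phi\in M^C(B^{s-1}_\om,B^{M-1}_\nu)$ as above. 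The inductive hypothesis applied to $(R\varphi_j)$ then gives $\|\sum_j(R\varphi_j)g_j\|_{B^{M-1}_\nu}\lesssim\|(g_j)\|$, completing the step; as only finitely many peels and a single fixed shift occur, the resulting inclusion is continuous. The main obstacle is exactly this recognition that differentiating the symbols yields a new bounded column multiplier into the once-lower target space, which is precisely where the product rule feeds into the rectangular inclusion of Theorem \ref{thm:mult_inclusion_intro}; everything else is bookkeeping, and the base case is easy once the Bergman-target structure is exploited.
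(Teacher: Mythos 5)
Your proof is correct, but it takes a genuinely different route from the paper's. The paper's own argument is a short, modular reduction: it uses Theorem \ref{omxTheorem} to write \emph{both} spaces with a common integer index, $B^s_\om = B^N_{\om_x}$ and $B^t_\nu = B^N_{\nu_y}$ where $N = s+x = t+y \in \N$, verifies the chain $\Mult(B^N_{\om_x}, B^N_{\nu_y}(\ell^2)) \subseteq \cdots \subseteq \Mult(B^0_{\om_x}, B^0_{\nu_y}(\ell^2))$ via Corollary \ref{cor:Besov_rectangular_multiplier_inclusion}, and then cites \cite[Theorem 4.2]{AlHaMcCRiWeakProd}, which is exactly the statement that this multiplier inclusion condition for the pair implies $M^C \subseteq M^R$. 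You instead normalize only the \emph{target} index to an integer $M$ and reprove the engine yourself by induction on $M$: the base case exploits the integral form of the Bergman norm (Cauchy--Schwarz plus Tonelli decouples the row bound over $j$), and the inductive step combines the exact identity $\|f\|^2_{B^M_\nu} = a_0(\nu)|f(0)|^2 + \|Rf\|^2_{B^{M-1}_\nu}$, the Leibniz rule, and the contractive rectangular inclusion of Theorem \ref{thm:mult_inclusion_intro} --- used both to place $\Phi$ in $M^C(B^{s-1}_\om, B^{M-1}_\nu)$ and to show that the differentiated symbols $(R\varphi_j)$ form a column multiplier $B^s_\om \to B^{M-1}_\nu(\ell^2)$, so that the inductive hypothesis applies to each of the two sums produced by the product rule. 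The index bookkeeping in your applications of Theorem \ref{thm:mult_inclusion_intro} ($t'-s' = -1 \le t-s = -1$) is right, and testing the column operator on constants does control $\sum_j|\varphi_j(0)|^2$, so the argument stands. In effect you have given a self-contained proof, inside this paper's toolkit, of the result that the paper outsources; the paper's route buys brevity, yours buys transparency about the mechanism. Two routine points deserve explicit mention in a polished write-up: establish the row bounds first for finitely supported sequences $(g_j)$, so that interchanging $R$ with the sum needs no justification, and then pass to general $(g_j)$ by completeness of $B^M_\nu$; and observe that the constants produced by the induction depend only on $M$, the (contractive) inclusion constants, and the norm-equivalence constants from Theorem \ref{omxTheorem}, so that finitely many steps indeed yield a continuous inclusion.
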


\begin{proof}
  Let $\cH = B^s_{\omega}$ and $\cK = B^t_{\nu}$.
  We will use \cite[Theorem 4.2]{AlHaMcCRiWeakProd}, according to which the result follows from the
  multiplier inclusion condition for the pair $(\cH,\cK)$. To establish this property, by definition, we have to show that there are  weights
  $\widetilde \omega$ and $\widetilde \nu$ and $N \in \bN$ such that $\cH = B_{\widetilde \om}^N$, $\cK = B_{\widetilde \nu}^N$ (with equivalent norms)
  and
  \begin{equation*}
    \Mult(B_{\widetilde \omega}^N,B_{\widetilde \nu}^N(\ell^2)) \subseteq
    \Mult(B_{\widetilde \omega}^{N-1},B_{\widetilde \nu}^{N-1}(\ell^2)) \subseteq \cdots
    \subseteq \Mult(B_{\widetilde \omega}^0,B_{\widetilde \nu}^0(\ell^2))
  \end{equation*}
  with continuous inclusions.

  To this end, let $x,y \ge 0$ be real numbers such that $s+x = t + y \in \bN$ and let $N = s+x = t +y$ be this common value. Moreover,
  let $\widetilde \omega = \omega_x$ and $\widetilde \nu = \nu_y$. Then by Theorem \ref{omxTheorem}, we have $\cH = B_{\widetilde \omega}^N$
  and $\cK = B_{\widetilde \nu}^N$. The continuity of the inclusions above now follows from Corollary \ref{cor:Besov_rectangular_multiplier_inclusion},
  which concludes the proof.
\end{proof}


\section{Weakly normal weights}
\label{sec:weights_finer}

In this section, we will study several finer properties of $L^1[0,1]$ weights that will translate to Hilbert space properties
of the associated radially weighted Besov spaces.

\subsection{A doubling condition}
Recall from Section \ref{sec_v_x} that if $v \in L^1[0,1]$ is non-negative, then we defined for $x > 0$ a weight $v_x \in L^1[0,1]$ by
\begin{equation*}
  v_x(t)= \int_{[t,1]} \frac{(s-t)^{x-1}}{\Gamma(x)} v(s)ds, t \in [0,1)
\end{equation*}
and we also write $\widehat v = v_1$.
We will now discuss a class of weights on $[0,1]$, where one has an asymptotics of the type $v_{x+\alpha}(t)\approx (1-t)^\alpha v_x(t)$  for all $t\in [0,1]$ at least when $x\ge 1$.
As in \cite{PelaezRattya} we define the class $\widehat{\mathcal D}$ by saying that a non-negative integrable
 function $v$ is in  $ \widehat{\mathcal D}$ if $\vhat$ is doubling near 1, i.e. if there is a constant $c>0$ such that $\vhat(t) \le c\vhat(\frac{1+t}{2})$ for all $t\in [0,1)$. It is clear that if $v\in \widehat{\HD}$ is not identically equal to 0, then it is
a weight.   For later reference we record the following elementary lemma.

\begin{lemma}\label{doublingweights} If $v\in L^1[0,1]$ is a weight, then $v\in \widehat{\mathcal D}$ if and only if there is $M>1$ such that
$$ {\int_t^1\vhat(s) ds}\le (1-t) \vhat(t)\le  M {\int_t^1\vhat(s) ds} .$$
\end{lemma}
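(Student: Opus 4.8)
The plan is to prove the equivalence in Lemma \ref{doublingweights} by relating the doubling condition on $\vhat$ to the comparison of $(1-t)\vhat(t)$ with the tail integral $\int_t^1 \vhat(s)\,ds$. Note first that the left-hand inequality $\int_t^1 \vhat(s)\,ds \le (1-t)\vhat(t)$ is automatic and requires no hypothesis: since $\vhat(t) = \int_t^1 v(x)\,dx$ is manifestly nonincreasing in $t$, for $s \in [t,1]$ we have $\vhat(s) \le \vhat(t)$, and integrating over $[t,1]$ gives the bound directly. So the entire content of the lemma lies in the equivalence between $v \in \widehat{\mathcal D}$ and the existence of $M>1$ making the right-hand inequality $(1-t)\vhat(t) \le M \int_t^1 \vhat(s)\,ds$ hold.

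First I would prove that $v \in \widehat{\mathcal D}$ implies the right-hand inequality. The key computation is to bound the integral $\int_t^1 \vhat(s)\,ds$ from below by restricting to the subinterval $[t, \frac{1+t}{2}]$, which has length $\frac{1-t}{2}$. On this subinterval $\vhat(s) \ge \vhat(\frac{1+t}{2})$ by monotonicity, so
\begin{equation*}
  \int_t^1 \vhat(s)\,ds \ge \int_t^{(1+t)/2} \vhat(s)\,ds \ge \frac{1-t}{2}\,\vhat\Big(\frac{1+t}{2}\Big) \ge \frac{1-t}{2c}\,\vhat(t),
\end{equation*}
where the last step uses the doubling hypothesis $\vhat(t) \le c\,\vhat(\frac{1+t}{2})$. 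Rearranging gives $(1-t)\vhat(t) \le 2c \int_t^1 \vhat(s)\,ds$, so the right-hand inequality holds with $M = 2c$.

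Conversely, I would show that the right-hand inequality implies $v \in \widehat{\mathcal D}$. Here the idea is to set $F(t) = \int_t^1 \vhat(s)\,ds$, so that $F'(t) = -\vhat(t)$ and the inequality reads $(1-t)(-F'(t)) \le M F(t)$, i.e. $-F'(t)/F(t) \le M/(1-t)$. Integrating this differential inequality from $t$ to $\frac{1+t}{2}$ controls the ratio $F(\frac{1+t}{2})/F(t)$; explicitly one obtains $\log\frac{F(t)}{F((1+t)/2)} \le M \log 2$, so $F(t) \le 2^M F(\frac{1+t}{2})$, i.e. $F$ itself is doubling. Combining this doubling of $F$ with the easy inequalities relating $\vhat$ and $F$ (the automatic bound $F(t) \le (1-t)\vhat(t)$ together with the just-proven reverse bound applied at the point $\frac{1+t}{2}$) should yield the doubling estimate $\vhat(t) \le c\,\vhat(\frac{1+t}{2})$ after chaining the comparisons across the nested intervals.

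The main obstacle I anticipate is the converse direction: the differential-inequality manipulation is clean only if $\vhat$ (equivalently $F$) is absolutely continuous with the expected derivative, which holds here since $\vhat$ is an indefinite integral of an $L^1$ function, but care is needed because $\vhat$ need not be strictly positive or continuous a priori, and $F$ could in principle vanish. One must verify $F(t) > 0$ for $t<1$ (which follows from the weight condition \eqref{eqjm2}, guaranteeing $\vhat$ is not eventually zero) to make the logarithmic integration legitimate. An alternative, perhaps cleaner route avoiding differentiation is to iterate the right-hand inequality across a telescoping sequence of points $t_k = 1 - 2^{-k}(1-t)$ approaching $1$, summing the resulting geometric comparisons to bound $\vhat(t)$ by $\vhat(\frac{1+t}{2})$ directly; I would likely present whichever of these two arguments produces the constant most transparently.
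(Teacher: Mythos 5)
Your proof is correct and takes essentially the same route as the paper: the forward direction is identical (restrict to $[t,\tfrac{1+t}{2}]$, use monotonicity of $\vhat$ and the doubling hypothesis), and in the converse your integration of $-F'/F \le M/(1-t)$ to obtain $F(t) \le 2^M F(\tfrac{1+t}{2})$ is exactly the paper's observation, checked by differentiation, that $F(t)/(1-t)^M$ is nondecreasing, after which both arguments chain the same three comparisons. One minor slip in your parenthetical: the chaining uses the hypothesis bound $(1-t)\vhat(t) \le M F(t)$ at the point $t$ and the automatic bound $F \le (1-\cdot)\vhat$ at the point $\tfrac{1+t}{2}$, not the other way around, but this does not affect the substance of the argument.
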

\begin{proof} The inequality on the left is  true for all $v\ge 0$ since $\vhat$ is nonincreasing.  First suppose that $\vhat$ is doubling. Then there is a $C>0$ such that $\vhat(t)\le C\vhat(\frac{1+t}{2})$ for all $t\in [0,1)$. Now fix $t\in [0,1)$, then
\begin{align*}
\int_t^1\vhat(s)ds&\ge \int_{t}^{(1+t)/2}\vhat(s)ds\\
&\ge  \vhat(\frac{1+t}{2}) \frac{1-t}{2}\\
&\ge  \vhat(t)\frac{1-t}{2C}.
\end{align*}

Next suppose that   there is $M>1$ such that $(1-t)\vhat(t) \le M \int_t^1\vhat(s)ds$ for all $t \in [0,1)$. Then one checks by taking a derivative that $\frac{\int_t^1\vhat(s)ds}{(1-t)^M}$ is nondecreasing and thus
\begin{align*}\frac{\vhat(t)}{(1-t)^{M-1}} &\le M\frac{\int_t^1\vhat(s)ds}{(1-t)^M}\\
&\le M\frac{\int_{\frac{1+t}{2}}^1\vhat(s)ds}{(1-\frac{1+t}{2})^M}\\
&\le M \vhat(\frac{1+t}{2})\frac{2^{M-1}}{(1-t)^{M-1}}.
\end{align*}
It follows that $\vhat$ is doubling.
\end{proof}
Notice that for $x>0$ one has $\hat{v_x}(t)=v_{x+1}(t)=\int_t^1\frac{(s-t)^{x-1}}{\Gamma(x)}\vhat(s) ds$ and with that it is easy to show that if $v\in \widehat{\HD}$, then $v_x\in \widehat{\HD}$ for each $x>0$. Then the previous  lemma along with Lemma \ref{(1 minus t)v} implies that for every  $v\in \widehat{\HD}$ we have $v_{x+1}(t) \approx (1-t)v_x(t)$ for every $x\ge 1$. Since $\vhat=v_1$ we inductively obtain $v_{n+1}(t) \approx (1-t)^n \vhat(t)$ for each $n\in \N$. But then we have for $0\le x\le n$ that
\begin{align*} 1&\ge \frac{\int_t^1\left(\frac{s-t}{1-t}\right)^x v(s)ds}{\vhat(t)}
&\ge  \frac{\int_t^1\left(\frac{s-t}{1-t}\right)^n v(s)ds}{\vhat(t)}
&=\frac{\int_t^1(s-t)^n v(s)ds}{(1-t)^n\vhat(t)}\ge C_n.
\end{align*}
Thus we have proved the following Lemma.

\begin{lemma} \label{vxdoublingweights} If $v\in \widehat{\HD}$, then for all $x\ge 0$ we have $v_{x+1}(t) \approx (1-t)^x \vhat(t)=(1-t)^x v_1(t)$.
\end{lemma}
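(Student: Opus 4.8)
The plan is to work directly from the integral representation
\[
  v_{x+1}(t) = \int_t^1 \frac{(s-t)^{x-1}}{\Gamma(x)}\, \widehat v(s)\, ds,
\]
which is valid for every $x > 0$ by the semigroup law, since $(\widehat v)_x = (v_1)_x = v_{x+1}$. The case $x = 0$ is trivial because $v_1 = \widehat v$ and $(1-t)^0 = 1$, so I would assume $x > 0$ and extract both halves of the two-sided estimate $v_{x+1}(t) \approx (1-t)^x \widehat v(t)$ from this one formula. The upper bound needs no hypothesis on $v$: since $\widehat v(t) = \int_t^1 v$ is nonincreasing, $\widehat v(s) \le \widehat v(t)$ for $s \ge t$, whence
\[
  v_{x+1}(t) \le \widehat v(t) \int_t^1 \frac{(s-t)^{x-1}}{\Gamma(x)}\, ds = \frac{(1-t)^x}{\Gamma(x+1)}\, \widehat v(t).
\]
(This is just the base-index-$1$, $\alpha = x$ case of Lemma \ref{(1 minus t)v}.)

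The lower bound is the only real obstacle, and it is precisely where membership in $\widehat{\mathcal D}$ is used. Here I would throw away all but a controlled portion of the integral: restricting to $s \in [t, \frac{1+t}{2}]$ and again using that $\widehat v$ is nonincreasing gives
\[
  v_{x+1}(t) \ge \widehat v\Big(\tfrac{1+t}{2}\Big) \int_t^{(1+t)/2} \frac{(s-t)^{x-1}}{\Gamma(x)}\, ds = \frac{1}{\Gamma(x+1)} \Big( \frac{1-t}{2}\Big)^x \widehat v\Big(\tfrac{1+t}{2}\Big),
\]
where the elementary integral is finite for every $x > 0$ (the singularity at $s = t$ is integrable since $x - 1 > -1$). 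The doubling estimate $\widehat v(t) \le c\, \widehat v(\frac{1+t}{2})$ from the definition of $\widehat{\mathcal D}$ then lets me replace $\widehat v(\frac{1+t}{2})$ by $\frac1c \widehat v(t)$, producing
\[
  v_{x+1}(t) \ge \frac{1}{c\, 2^x\, \Gamma(x+1)} (1-t)^x\, \widehat v(t).
\]
Combined with the upper bound, this yields the claim with implied constants depending only on $x$ and the doubling constant $c$.

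The one thing to check carefully is that these constants are uniform in $t$, which is automatic because doubling holds with a single constant $c$ for all $t \in [0,1)$ and the midpoint $\frac{1+t}{2}$ is exactly the point appearing in the definition of $\widehat{\mathcal D}$. I note that this direct route settles all real $x > 0$ simultaneously and, in contrast to an inductive scheme that would first prove $v_{x+1} \approx (1-t) v_x$ in unit steps and then interpolate using the monotonicity of $y \mapsto (\frac{s-t}{1-t})^y$, it does not even require knowing that $v_x \in \widehat{\mathcal D}$ for all $x$.
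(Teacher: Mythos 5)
Your proof is correct, and it takes a genuinely more direct route than the paper's. The paper first shows that $\widehat{\HD}$ is stable under the smoothing $v \mapsto v_x$, then combines Lemma \ref{doublingweights} with Lemma \ref{(1 minus t)v} to get the one-step relation $v_{x+1}(t) \approx (1-t)v_x(t)$ for each $x \ge 1$, iterates this to obtain $v_{n+1}(t) \approx (1-t)^n \widehat v(t)$ for integers $n$, and finally interpolates to non-integer exponents by exploiting the monotonicity of $y \mapsto \left(\frac{s-t}{1-t}\right)^y$, sandwiching the normalized ratio $\Gamma(x+1)\,v_{x+1}(t)/\bigl((1-t)^x \widehat v(t)\bigr)$ between a constant $C_n$ and $1$. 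You instead read everything off the single representation $v_{x+1} = (\widehat v\,)_x$: the upper bound is the trivial monotonicity estimate (as you note, a special case of Lemma \ref{(1 minus t)v}), and the lower bound restricts the integral to $[t, \frac{1+t}{2}]$ and applies the doubling hypothesis exactly once, at the midpoint where it is stated. This buys you several things: explicit constants $\frac{1}{\Gamma(x+1)}$ and $\frac{1}{c\,2^x\,\Gamma(x+1)}$ depending only on $x$ and the doubling constant; all real $x > 0$ handled simultaneously with no induction or interpolation step; and, as you correctly observe, no need for the auxiliary fact that $v_x \in \widehat{\HD}$ for all $x$. What the paper's longer route buys is that its intermediate ingredients---the stability of $\widehat{\HD}$ under smoothing and the unit-step asymptotics---are facts the authors want on record anyway, since they resurface in Lemma \ref{BekolleAndDoubling} and in the proof of Theorem \ref{existenceOfMu}; your argument, being self-contained, does not supply those byproducts, but as a proof of the stated lemma it is cleaner.
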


\subsection{Weakly normal weights}

 The following definition goes back to S.N. Bernstein, \cite{Bernstein}.
\begin{definition} Let $a<b$. A function $f:[a,b)\to [0,\infty)$ is called almost decreasing if there is some $C>0$ such that $f(t) \le Cf(s)$, whenever $a \le s\le t<b$. Almost increasing is defined similarly.
\end{definition}
One reason this definition is useful for weights is the following lemma.
\begin{lemma}\label{almostDecreasing} $f:[a,b)\to [0,\infty)$ is  almost decreasing, if and only if there is a nonincreasing function $g:[a,b)\to \R$ and $c,C>0$ such that
$$cg(t)\le f(t) \le Cg(t)$$ for all $t\in [a,b)$. If $f$ is continuous, then $g$ can be chosen to be continuous as well.
\end{lemma}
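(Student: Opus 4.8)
The plan is to prove the two implications separately, with the forward direction carrying essentially all of the work. For the easy direction, I would assume that there is a nonincreasing $g:[a,b)\to\R$ and constants $c,C>0$ with $cg(t)\le f(t)\le Cg(t)$. Then for $a\le s\le t<b$ the monotonicity of $g$ gives
$$
f(t)\le Cg(t)\le Cg(s)\le \frac{C}{c}\,f(s),
$$
so $f$ is almost decreasing with constant $C/c$. This requires no continuity hypothesis and takes only the displayed line.

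For the forward direction I would assume $f$ is almost decreasing, say $f(t)\le C f(s)$ whenever $a\le s\le t<b$, and take as candidate the \emph{nonincreasing envelope}
$$
g(t)=\sup_{t\le s<b} f(s).
$$
By construction $g$ is nonincreasing and $f(t)\le g(t)$. Both the finiteness of $g$ and the upper comparison come directly from the hypothesis: for every $s\ge t$ we have $f(s)\le C f(t)$, whence $g(t)\le C f(t)<\infty$. This yields the two-sided estimate $\tfrac{1}{C}\,g(t)\le f(t)\le g(t)$, i.e.\ the required bounds with $c=1/C$ and upper constant $1$. Thus the non-continuous statement follows immediately, and the issue is only the regularity of $g$.

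The main obstacle is the continuity claim, and my approach is to show that the envelope $g$ above is \emph{already} continuous whenever $f$ is, so no separate construction is needed. Since $g$ is monotone, it suffices to check left- and right-continuity at each $t_0\in[a,b)$. For left-continuity I would write, for $t<t_0$,
$$
g(t)=\max\Big\{\sup_{t\le s<t_0} f(s),\ g(t_0)\Big\},
$$
and use continuity of $f$ to conclude that $\sup_{t\le s<t_0} f(s)\to f(t_0)$ as $t\nearrow t_0$; since $f(t_0)\le g(t_0)$, this forces $g(t_0^-)=g(t_0)$. For right-continuity, one computes $g(t_0^+)=\sup_{s>t_0} f(s)$ from monotonicity, so the only possible downward jump at $t_0$ would require $\sup_{s>t_0} f(s)<f(t_0)$; but continuity of $f$ gives $\sup_{s>t_0} f(s)\ge \limsup_{s\searrow t_0} f(s)=f(t_0)$, which rules this out and yields $g(t_0^+)=g(t_0)$. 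Combining the two gives continuity of $g$. I expect the verification that $\sup_{t\le s<t_0} f(s)\to f(t_0)$ and the identification $g(t_0^+)=\sup_{s>t_0}f(s)$ to be the only genuinely delicate points; everything else is bookkeeping with the almost-decreasing constant.
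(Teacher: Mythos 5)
Your proof is correct, and the overall strategy is the same as the paper's (both directions: the trivial comparison argument, then a canonical monotone envelope for the converse), but your envelope is the mirror image of the paper's. The paper takes the running infimum from the left, $g(t)=\inf\{f(s): a\le s\le t\}$, which is automatically finite, satisfies $g\le f$ by construction, and gets $f\le Cg$ from the almost-decreasing hypothesis; its continuity (when $f$ is continuous) is the standard fact that a running minimum over the compact intervals $[a,t]$ is continuous, which the paper states without proof. You instead take the tail supremum $g(t)=\sup_{t\le s<b}f(s)$, which sits above $f$, needs the hypothesis already to be finite (you correctly note $g\le Cf$), and whose continuity is slightly less automatic because the supremum is over the non-compact set $[t,b)$ --- but your one-sided-limit argument handles this correctly: the identities $g(t)=\max\bigl\{\sup_{t\le s<t_0}f(s),\,g(t_0)\bigr\}$ and $g(t_0^+)=\sup_{s>t_0}f(s)$ are right, and continuity of $f$ closes both gaps. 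The two constructions are dual and neither buys extra generality; the paper's choice makes the continuity claim essentially free, while yours requires the explicit verification you supply, so your write-up is a bit longer but fully rigorous.
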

\begin{proof} Suppose that $g$ is nonincreasing such that
$cg(t)\le f(t) \le Cg(t)$ for all $t\in [a,b)$. Then for  $a \le s\le t<b$ we have
$$f(t) \le Cg(t) \le Cg(s)\le \frac{C}{c} f(s)=C'f(s).$$
Conversely, suppose that $f$ is almost decreasing, then for $t\in [a,b)$ set
$$g(t)=\inf\{f(s): s\le t\}.$$
Note that if $f$ is continuous, then $g$ is continuous.
Clearly $g$ is nonincreasing and $g(t) \le f(t)$ for all $t\in [a,b)$. Furthermore, the hypothesis on $f$ implies the existence of $C>0$ such that $f(t) \le C f(s)$, whenever $a \le s\le t<b$. This implies $f(t)\le Cg(t)$ for all $t\in [a,b)$.
\end{proof}

\begin{lemma} \label{weaklyNormal} Let $v \in L^1[0,1]$ be a weight. If $t_0\in [0,1)$, $\alpha\in \R$, and $x\ge 0$ such that $\frac{(1-t)^\alpha}{v_x(t)}$ is almost decreasing in $[t_0,1)$, then so is $\frac{(1-t)^{\alpha+y}}{v_{x+y}(t)}$ for every $y\ge 0$.
\end{lemma}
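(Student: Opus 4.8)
The plan is to verify the almost-decreasing property directly from its definition, after rewriting it in multiplicative form. By Lemma~\ref{almostDecreasing}, saying that $\frac{(1-t)^\alpha}{v_x(t)}$ is almost decreasing on $[t_0,1)$ is equivalent to the existence of a constant $C>0$ with
\begin{equation*}
  v_x(u) \le C \Big(\frac{1-u}{1-t}\Big)^\alpha v_x(t) \quad \text{whenever } t_0 \le u \le t < 1,
\end{equation*}
and the goal is to produce a constant $C'$ with the analogous bound
\begin{equation*}
  v_{x+y}(u) \le C' \Big(\frac{1-u}{1-t}\Big)^{\alpha+y} v_{x+y}(t) \qquad (t_0 \le u \le t < 1).
\end{equation*}
The case $y=0$ is trivial, so I would assume $y>0$ and use the semigroup law $(v_x)_y = v_{x+y}$ to represent $v_{x+y}$ as the fractional integral of $v_x$, namely $v_{x+y}(t) = \int_t^1 \frac{(s-t)^{y-1}}{\Gamma(y)} v_x(s)\,ds$.

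The key step is an affine change of variables that puts the integrals defining $v_{x+y}(u)$ and $v_{x+y}(t)$ on a common footing. Substituting $s = u + r(1-u)$ gives
\begin{equation*}
  v_{x+y}(u) = \frac{(1-u)^y}{\Gamma(y)} \int_0^1 r^{y-1}\, v_x\big(u + r(1-u)\big)\, dr,
\end{equation*}
and similarly for $v_{x+y}(t)$ with $u$ replaced by $t$. For a fixed $r \in [0,1]$ set $s_u = u + r(1-u)$ and $s_t = t + r(1-t)$. A direct computation shows $s_u \le s_t$, that both lie in $[t_0,1)$ since $s_u \ge u \ge t_0$, and, crucially, that $1 - s_u = (1-r)(1-u)$ and $1 - s_t = (1-r)(1-t)$, so the ratio $\frac{1-s_u}{1-s_t} = \frac{1-u}{1-t}$ is independent of $r$. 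Applying the multiplicative hypothesis to the pair $s_u \le s_t$ therefore yields, for every $r$,
\begin{equation*}
  v_x\big(u + r(1-u)\big) \le C \Big(\frac{1-u}{1-t}\Big)^\alpha v_x\big(t + r(1-t)\big).
\end{equation*}

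Finally I would multiply this pointwise inequality by $r^{y-1} \ge 0$ and integrate over $r \in [0,1]$; since the factor $\big(\frac{1-u}{1-t}\big)^\alpha$ does not depend on $r$, it pulls out of the integral, and comparing with the substituted expressions above gives
\begin{equation*}
  v_{x+y}(u) \le C \Big(\frac{1-u}{1-t}\Big)^{\alpha+y} v_{x+y}(t),
\end{equation*}
which is the desired bound with $C' = C$, proving that $\frac{(1-t)^{\alpha+y}}{v_{x+y}(t)}$ is almost decreasing on $[t_0,1)$. The main obstacle to anticipate is that the almost-decreasing hypothesis controls $v_x$ in only one direction (it bounds $v_x$ at the \emph{smaller} argument from above), so a naive split of the tail integral $\int_t^1$ does not produce an upper bound on $v_{x+y}$. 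The substitution that normalizes both intervals $[u,1]$ and $[t,1]$ to $[0,1]$ and makes $\frac{1-s_u}{1-s_t}$ constant in $r$ is precisely what circumvents this, since it lets the one-sided hypothesis be applied uniformly under a single integral sign.
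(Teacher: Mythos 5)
Your proof is correct and is essentially the same as the paper's: the paper applies the one-sided hypothesis pointwise under the integral via the affine map $s \mapsto (1-\lambda)+\lambda s$ with $\lambda = \frac{1-t'}{1-t}$, which sends the tail $[t,1]$ onto $[t',1]$ and pairs exactly the same points as your normalization $s_u = u + r(1-u)$, $s_t = t + r(1-t)$. The change of variables, the constancy of the ratio $\frac{1-s_u}{1-s_t}$, and the resulting constant $C' = C$ all coincide with the paper's argument.
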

\begin{proof} We consider reciprocals and thus prove a statement about almost increasing functions. One verifies that $v_{x+y}=(v_x)_y$ for all $x,y\ge 0$. Let $t, t'\in [t_0,1)$ with $t<t'$, and define $\lambda=\frac{1-t'}{1-t}$. Then $(1-\lambda)+\lambda t=t'$ and
\begin{align*}
v_{x+y}(t)
&=  \int_t^1\frac{(s-t)^{y-1}}{\Gamma(y)} \frac{v_x(s)}{(1-s)^\alpha} (1-s)^\alpha ds\\
&\le C \int_t^1\frac{(s-t)^{y-1}}{\Gamma(y)} \frac{v_x((1-\lambda)+\lambda s)}{\lambda^\alpha(1-s)^\alpha} (1-s)^\alpha ds\\
&=C \lambda^{-(\alpha+y)} \int_{t'}^1\frac{(u-t')^{y-1}}{\Gamma(y)} {v_x(u)} du\\
&= C\lambda^{-(\alpha+y)} v_{x+y}(t').
\end{align*}
The Lemma follows. \end{proof}
Now recall from  \cite{ShieldsWilliamsCrelle} that a weight function $v$ is called normal, if there are $\alpha > \beta \in \R $ such that $\frac{(1-t)^\beta}{v(t)}$ is almost increasing in $[t_0,1)$ and $\frac{(1-t)^\alpha}{v(t)}$ is almost decreasing in $[t_0,1)$ for some $0\le t_0<1$. Actually, Shields and Williams required $\beta>0$ for their results, and they wanted the limits to be  $\infty$ and 0. Furthermore, in the paper \cite{ShieldsWilliams} the ratios were assumed to be nondecreasing (resp. nonincreasing), but this definition was modified in the later paper \cite{ShieldsWilliamsCrelle}. That is a  convention that has been  used by many  authors since then.

\begin{definition} Let $\alpha\in \R$. We call a weight $v$ {\it weakly normal of order $\alpha$}, if there is $x \ge 0$ such that $\frac{(1-t)^{\alpha+x}}{v_x(t)}$ is almost decreasing in $[t_0,1)$ for some $0\le t_0<1$. The weight $v$ is called weakly normal, if it is  weakly normal of order $\alpha$ for some $\alpha \in \R$.
\end{definition}

Since $v_x$ is nonincreasing for all $x \ge 1$, we do not require an assumption corresponding to the parameter
$\beta$ above.

If a weight is weakly normal of order $\alpha$, then $\alpha>-1$. Indeed, if $v$ is weakly normal of order $\alpha$, then by Lemma  \ref{weaklyNormal} we may assume that $\frac{(1-t)^{\alpha+x}}{v_x(t)}$ is almost decreasing in $[t_0,1)$ for some $x\ge 1$ and $0\le t_0<1$. Then for $t\in [t_0,1)$ we have
$$(1-t)^{\alpha+x} \le C v_x(t) \le \frac{C}{\Gamma(x)} (1-t)^{x-1}\vhat(t).$$ But we have $\vhat(t)\to 0$ as $t\to 1$. We see that this is only possible if $\alpha>-1$.

Obviously  $v(t)= (1-t)^\alpha$ is weakly normal of order $\alpha$, whenever $\alpha >-1$.
It is also clear from the identity $v_{x+y}=(v_x)_y$ and Lemma \ref{weaklyNormal} that $v$ is weakly normal, if and only if $v_x$ is weakly normal for each $x\ge 0$, and this happens if and only if $v_x$ is weakly normal for some $x\ge 0$. In the following Lemma we have summarized the relationship of the weakly normal weights with the class $\widehat{\HD}$ and with another class  of weights that has been considered in the literature. For $\eta >-1$ the Bekoll\'e-Bonami class $B_2(\eta)$ is defined by
$$ \frac{v(t)}{(1-t)^\eta}\in B_2(\eta) \ \Longleftrightarrow \ \int_t^1v(s)ds \ \int_t^1 \frac{(1-s)^{2\eta}}{v(s)}ds \approx (1-t)^{2\eta+2}.$$
This is the radial weight version of a more general definition that characterizes the weights $\om$ on $\Bd$ such that a corresponding Bergman projection is bounded on $L^2(\om)$, see e.g.  \cite{Bekolle}.

 \begin{lemma}\label{BekolleAndDoubling}
   Let $v \in L^1[0,1]$ be a weight.

   (a)  If $\eta>-1$ and $\frac{v(t)}{(1-t)^\eta}\in B_2(\eta)$, then  $v$ is weakly normal of order $2\eta+1$.

 (b)  Let $v\in L^1[0,1]$ be non-negative. Then the following are equivalent: \begin{enumerate}
 \item $v$ is weakly normal,
\item there are $x\ge 0$ and $\eta>-1$ such that $\frac{v_x(t)}{(1-t)^\eta}\in B_2(\eta)$,
\item there is  $x\ge 0$  such that $v_x\in \widehat{\HD}$.
\end{enumerate}
 \end{lemma}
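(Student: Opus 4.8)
The plan is to prove (a) directly by a single index shift, and then to establish (b) via the cycle (2) $\Rightarrow$ (1) $\Rightarrow$ (3) $\Rightarrow$ (2), using the calculus of the shifts $v \mapsto v_x$ together with Lemmas \ref{doublingweights}, \ref{vxdoublingweights}, \ref{weaklyNormal} and \ref{almostDecreasing}. For (a) I would take shift parameter $x=1$, so that $v_1 = \widehat{v}$. Writing $J(t) = \int_t^1 \frac{(1-s)^{2\eta}}{v(s)}\,ds$, the defining relation of $B_2(\eta)$ reads $\widehat{v}(t) J(t) \approx (1-t)^{2\eta+2}$, whence $\frac{(1-t)^{2\eta+2}}{\widehat{v}(t)} \approx J(t)$. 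Since $J$ is manifestly nonincreasing, Lemma \ref{almostDecreasing} shows that $\frac{(1-t)^{(2\eta+1)+1}}{v_1(t)}$ is almost decreasing, which is exactly the statement that $v$ is weakly normal of order $2\eta+1$. The implication (2) $\Rightarrow$ (1) is then immediate: applying (a) to the weight $v_x$ shows $v_x$ is weakly normal, and weak normality of $v$ is equivalent to that of $v_x$.

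For (1) $\Rightarrow$ (3), I would start from $\frac{(1-t)^{\alpha+x}}{v_x(t)}$ almost decreasing on some $[t_0,1)$, recalling that weak normality forces $\alpha>-1$. Applying Lemma \ref{weaklyNormal} with $y=1$, I may pass to $W := v_{x+1} = \widehat{v_x}$ and $\delta := \alpha + x + 1 > 0$, with $\frac{(1-t)^\delta}{W(t)}$ almost decreasing. The almost-decreasing bound reads $W(s) \ge c\big(\tfrac{1-s}{1-t}\big)^{\delta} W(t)$ for $t_0 \le t \le s < 1$, and integrating in $s$ gives $\int_t^1 W(s)\,ds \ge \frac{c}{\delta+1}(1-t)W(t)$. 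Since $W = \widehat{v_x}$, this is precisely the inequality $(1-t)\widehat{v_x}(t) \le M \int_t^1 \widehat{v_x}(s)\,ds$ of Lemma \ref{doublingweights}, so $v_x \in \widehat{\HD}$, which is (3).

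The substance of the argument is (3) $\Rightarrow$ (2). Assuming $v_x \in \widehat{\HD}$, the weight $u := v_{x+1}$ is nonincreasing and again lies in $\widehat{\HD}$. By Lemma \ref{vxdoublingweights} applied to $v_x$, one has $\widehat{u} = v_{x+2} \approx (1-t)u(t)$, so $\int_t^1 u \approx (1-t)u(t)$; combined with the doubling of $\widehat{u}$ this forces $u$ itself to be doubling, $u(t) \le C\, u\big(\tfrac{1+t}{2}\big)$. Iterating this inequality along the dyadic points $1 - t_k = 2^{-k}(1-t)$ yields a polynomial decay lower bound $u(s) \ge c\big(\tfrac{1-s}{1-t}\big)^{\beta} u(t)$ for $s \ge t$, with $\beta = \log_2 C$. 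Inserting this into the reciprocal integral gives, for every $\eta$ with $2\eta - \beta + 1 > 0$,
\[
  \int_t^1 \frac{(1-s)^{2\eta}}{u(s)}\,ds \le \frac{C'(1-t)^\beta}{u(t)} \int_t^1 (1-s)^{2\eta - \beta}\,ds = \frac{C''(1-t)^{2\eta+1}}{u(t)}.
\]
Multiplying by $\int_t^1 u \approx (1-t)u(t)$ bounds the $B_2(\eta)$ product above by a constant times $(1-t)^{2\eta+2}$, while the matching lower bound is automatic from Cauchy--Schwarz applied to $\int_t^1 (1-s)^\eta\,ds = \frac{(1-t)^{\eta+1}}{\eta+1}$. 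Choosing $\eta > \max\{-1, \tfrac{\beta-1}{2}\}$ then yields $\frac{v_{x+1}(t)}{(1-t)^\eta} \in B_2(\eta)$, establishing (2) with shift parameter $x+1$ and closing the cycle.

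I expect the only genuinely delicate point to be the extraction of the polynomial decay estimate $u(s) \gtrsim \big(\tfrac{1-s}{1-t}\big)^\beta u(t)$ from the doubling condition, and the attendant observation that $\eta$ must be taken large enough (depending on the doubling constant) to make the reciprocal integral converge. Everything else reduces to the semigroup identity $v_{x+y}=(v_x)_y$, the asymptotics of Lemma \ref{vxdoublingweights}, and the elementary equivalences recorded in Lemmas \ref{doublingweights} and \ref{almostDecreasing}.
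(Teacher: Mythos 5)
Your proposal is correct, and three of its four pieces coincide with the paper's own proof: part (a) via the nonincreasing function $\int_t^1 (1-s)^{2\eta}/v(s)\,ds$ together with Lemma \ref{almostDecreasing}; (2) $\Rightarrow$ (1) by applying (a) to $v_x$; and (1) $\Rightarrow$ (3), which the paper compresses to ``one easily checks directly'' after arranging $x \ge 1$ --- your integration of the almost-decreasing lower bound against Lemma \ref{doublingweights} is a clean way of doing exactly that check (the one loose end is that your inequality $(1-t)\widehat{v_x}(t) \le M \int_t^1 \widehat{v_x}(s)\,ds$ is only obtained on $[t_0,1)$, but extending to $[0,1)$ is routine since $\widehat{v_x}$ is bounded and $\int_t^1 \widehat{v_x} > 0$ by \eqref{eqjm2}). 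Where you genuinely diverge is (3) $\Rightarrow$ (2). The paper's trick is to note that the inequality $(1-t)v_{x+1}(t) \le C v_{x+2}(t)$ furnished by Lemma \ref{doublingweights} is, by a one-line derivative computation, \emph{exactly} the statement that $(1-t)^C/v_{x+2}(t)$ is nonincreasing; since $v_{x+2}$ is nonincreasing as well, both factors of the $B_2$ product are dominated by $(1-t)$ times their integrands at $t$, giving $\int_t^1 v_{x+2}\,ds\,\int_t^1 (1-s)^C/v_{x+2}\,ds \le (1-t)^{C+2}$ immediately, i.e.\ (2) at shift $x+2$ with $\eta = C/2$ (the matching lower bound being Cauchy--Schwarz, just as in your argument). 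You instead upgrade the doubling of $\widehat{u}$, $u = v_{x+1}$, to pointwise doubling of $u$ via Lemma \ref{vxdoublingweights}, extract the polynomial decay $u(s) \gtrsim \bigl(\tfrac{1-s}{1-t}\bigr)^{\beta} u(t)$ by dyadic iteration, and then integrate; this lands at shift $x+1$ rather than $x+2$, but costs the dyadic argument and the extra appeal to Lemma \ref{vxdoublingweights}. Both routes are sound and both make $\eta$ depend on the doubling constant; the paper's derivative observation is shorter, while yours runs on the classical ``doubling implies polynomial decay'' mechanism and is correspondingly more self-contained.
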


 \begin{proof} (a) Let $\eta>-1$ and suppose $\frac{v(t)}{(1-t)^\eta}\in B_2(\eta)$, then $g(t)= \int_t^1\frac{(1-s)^{2\eta}}{v(s)} ds$ is nonincreasing and the hypothesis implies that $$g(t) \approx \frac{(1-t)^{2\eta+2}}{v_1(t)}.$$ Thus Lemma \ref{almostDecreasing} implies that $\frac{(1-t)^{2\eta+2}}{v_1(t)}$ is almost decreasing, i.e. $v$ satisfies the definition of weakly normal of order $2\eta+1$ with $x=1$.

(b) $(ii)\Rightarrow (i)$ follows from (a) and the earlier observation that $v$ is weakly normal if and only if $v_x$ is weakly normal for some $x \ge 0$.

$(iii)\Rightarrow (ii)$ By Lemma \ref{doublingweights} $v_x\in \widehat{\HD}$ if and only if there is a $C> 1$ such that $(1-t)v_{x+1}(t) \le Cv_{x+2}(t)$. By use of a first derivative one sees that this is equivalent to $\frac{(1-t)^C}{v_{x+2}(t)}$ being nonincreasing. Hence
$$\int_t^1v_{x+2}(s)ds \int_t^1 \frac{(1-s)^C}{v_{x+2}(s)} ds \le v_{x+2}(t)(1-t)  \frac{(1-t)^C}{v_{x+2}(t)} (1-t)= (1-t)^{C+2}.$$ Thus $\frac{v_{x+2}(t)}{(1-t)^{C/2}} \in B_2(C/2)$.

$(i)\Rightarrow (iii)$ If $v$ is weakly normal, then there are $x \ge 1$ and $\alpha >-1$ such that $\frac{(1-t)^{\alpha+x}}{v_x(t)}$ is almost decreasing. Then one easily checks directly that $v_x \in \widehat{\HD}$.
 \end{proof}

Weights of the type $(1-t)^\alpha \left(\frac{1}{t}\log\frac{1}{1-t}\right)^\beta$ for $\alpha >-1$, $\beta\ge 0$ are weakly normal of order $\alpha$. If $\beta<0$, then such a weight would be weakly normal of order $\gamma$ for each $\gamma >\alpha$. This also holds when $\alpha=-1$, although for $\beta<-1$ the weight $\frac{\left(\frac{1}{t}\log\frac{1}{1-t}\right)^\beta}{1-t}$ is not a Bekoll\'e weight.

Part (b) of the previous lemma could be paraphrased by saying that the weakly normal weights could also have been called "weakly doubling" or "weak Bekoll\'{e} weights". For us the viewpoint of weakly normal is important, because the order of a weakly normal weight determines the  cut-off for a weighted Besov space to have the Pick property, see Theorem \ref{PickandBekolle}. The following Theorem is instrumental for the proof.

\begin{theorem} \label{existenceOfMu} Let $v\in L^1[0,1]$ be a weight.  If  $v$ is weakly normal of order $\alpha>-1$,
then there is a positive Borel measure $\mu$ on $[0,1]$ such that
$$ \int_0^1 t^n v(t)dt \int_{[0,1]} t^n d\mu(t) \approx n^{-\alpha-1} \ \text{ as }n \to \infty.$$
\end{theorem}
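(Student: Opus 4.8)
The plan is to reduce to a smooth, doubling weight, construct $\mu$ explicitly as a dyadic sum of point masses, and then compare both moment sequences to the value of the weight at the scale $1-t\approx 1/n$.

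First I would invoke Lemma \ref{weaklyNormal} to choose $x\ge 1$ such that $h(t):=\frac{(1-t)^{\alpha+x}}{v_x(t)}$ is almost decreasing on some $[t_0,1)$, and write $\beta=\alpha+x$, which is $>0$ since $\alpha>-1$ and $x\ge 1$. Because $v_x$ is nonincreasing for $x\ge 1$, the two facts ``$v_x$ nonincreasing'' and ``$h$ almost decreasing'' combine to show that $v_x$ is doubling in the variable $1-t$, i.e. $v_x(1-\varepsilon)\approx v_x(1-2\varepsilon)$: the lower estimate $\frac{v_x(1-2\varepsilon)}{v_x(1-\varepsilon)}\ge 1$ is monotonicity, and the upper estimate $\frac{v_x(1-2\varepsilon)}{v_x(1-\varepsilon)}\le C2^{\beta}$ is exactly the almost-decreasing inequality for $h$ applied to the pair $1-2\varepsilon\le 1-\varepsilon$. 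By Lemma \ref{momentsByParts} applied to the measure $d\mu=v\,dt$ one has $\int_0^1 t^n v\,dt\approx n^{x}\int_0^1 t^n v_x\,dt$, so it suffices to produce a positive measure $\mu$ with $\bigl(\int_0^1 t^n v_x\,dt\bigr)\,m_n(\mu)\approx n^{-\beta-1}$, where $m_n(\mu)=\int_{[0,1]}t^n\,d\mu$; multiplying by $n^{x}$ then returns the claimed asymptotics $\bigl(\int_0^1 t^n v\,dt\bigr)\,m_n(\mu)\approx n^{-\alpha-1}$.

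To construct $\mu$, I would use Lemma \ref{almostDecreasing} to pick a continuous nonincreasing $g$ with $g\approx h$, set $t_k=1-2^{-k}$, and define
\[
  \mu = g(1^-)\,\delta_1 + \sum_{k=0}^{\infty}\bigl(g(t_k)-g(t_{k+1})\bigr)\,\delta_{t_k},
\]
a positive measure of finite total mass $g(0)$. The heart of the proof is then a pair of two-sided moment asymptotics, both obtained by a dyadic decomposition of $[0,1)$ into the intervals $I_k=[1-2^{-k},1-2^{-k-1})$, on which $1-t\approx 2^{-k}$, $t^n\approx e^{-n2^{-k}}$, and, by doubling, $v_x(t)\approx v_x(1-2^{-k})$ and $g(t)\approx g(1-2^{-k})$. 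Writing $j\approx \log_2 n$, the critical scale is $k\approx j$, where each integrand has the right order, giving the lower bounds at once. For the upper bounds one sums the dyadic contributions: the levels $k<j$ (where $1-t\gg 1/n$) are controlled by the doubly-exponential factor $e^{-c\,2^{j-k}}$, which beats the at-most-geometric growth of $g$ or $v_x$ permitted by doubling, while the levels $k>j$ (where $1-t\ll 1/n$) are controlled by monotonicity of $g$ and of $v_x$ together with the geometric gain $2^{-(k-j)(\beta+1)}$ coming from $(1-t)^{\beta}$. This yields $m_n(\mu)\approx g(1-1/n)$ and $\int_0^1 t^n v_x\,dt\approx \tfrac1n\,v_x(1-1/n)$; since $g(1-1/n)\approx \frac{n^{-\beta}}{v_x(1-1/n)}$ by the definition of $g$, the product is $\approx \tfrac1n v_x(1-1/n)\cdot \frac{n^{-\beta}}{v_x(1-1/n)}=n^{-\beta-1}$, as required.

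The main obstacle is precisely these off-critical-scale estimates: one must show that the moments localise at $1-t\approx 1/n$ and that the contributions from $1-t\gg 1/n$ and from $1-t\ll 1/n$ are genuinely negligible. This is where doubling (preventing $v_x$ and $g$ from growing too fast as $t$ decreases) and monotonicity (taming the region near $1$, where $g/(1-t)$ need not be integrable, so that an absolutely continuous $\mu$ may fail to exist and point masses are forced) both enter in an essential way. Verifying the doubling of $v_x$ from the weakly normal hypothesis, done in the reduction step, is short but indispensable.
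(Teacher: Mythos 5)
Your proposal is correct, and it reaches the conclusion by a more self-contained route than the paper. The core construction is the same: both proofs reduce via Lemma \ref{momentsByParts} to the smoothed weight $v_x$, take a nonincreasing $g\approx (1-t)^{\alpha+x}/v_x(t)$ (Lemma \ref{almostDecreasing}), and build $\mu$ so that its tail distribution $\mu([t,1])$ is comparable to $g$ -- your dyadic sum of point masses is exactly a discretization of the paper's Stieltjes measure defined by $\mu([t,1])=g(t)$. Where you genuinely diverge is in how the moment asymptotics are obtained. The paper routes everything through the doubling class $\widehat{\HD}$: it verifies $g\in\widehat{\HD}$ and $v_x\in\widehat{\HD}$ (via Lemma \ref{BekolleAndDoubling}), invokes Lemma A of Pel\'aez--R\"atty\"a to localize moments as $\int_0^1 t^n w\,dt\approx\int_{1-1/n}^1 w\,dt$, uses Lemma \ref{doublingweights} for $\int_t^1 v_x\approx(1-t)v_x(t)$, and then estimates $v_x(1-\tfrac1n)\int_{1-1/n}^1 (1-s)^{\alpha+x}/v_x(s)\,ds$ from above and below using monotonicity and the almost-increasing hypothesis. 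You instead prove a \emph{pointwise} doubling of $v_x$ at scale $1-t$ directly from weak normality plus monotonicity, and then establish both localizations $m_n(\mu)\approx g(1-\tfrac1n)$ and $\int_0^1 t^n v_x\,dt\approx \tfrac1n v_x(1-\tfrac1n)$ by hand with dyadic blocks, so no external citation and less $\widehat{\HD}$ machinery is needed; the final assembly is then a clean product of two localized quantities. Two small points to tidy when writing this up: for the deep levels $k>j$ in the integral $\int t^n v_x$, monotonicity of $v_x$ plus the interval lengths $2^{-k}$ already suffice (the stated gain $2^{-(k-j)(\beta+1)}$ via $(1-t)^\beta$ would require a lower bound on $g(t_k)$ that you do not have); and since the almost-decreasing property of $h$ only holds on $[t_0,1)$, the finitely many dyadic levels with $t_k<t_0$ must be handled separately -- their contribution is $O(e^{-cn})$, which is negligible against $g(1-\tfrac1n)\gtrsim n^{-\beta}$. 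Neither issue is a gap in the approach.
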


\begin{proof} By Lemma \ref{momentsByParts} it will suffice to show that for some $x\ge 0$ there is a measure $\mu$ with
$$ \int_0^1 t^n v_x(t)dt \int_{[0,1]} t^n d\mu(t) \approx n^{-\alpha-x-1} \ \text{ as }n \to \infty.$$

Note that $v_x$ is continuous for all $x\ge 1$. Thus, by the hypothesis and Lemmas \ref{weaklyNormal} and \ref{almostDecreasing}  there is $x\ge 1$ and a nonincreasing continuous function $g$ on $[0,1)$ such that $g(t) \approx \frac{(1-t)^{\alpha+x}}{v_x(t)}$ for $t\in [t_0,1)$.

  By Lemmas \ref{weaklyNormal} and \ref{BekolleAndDoubling} we may assume that $x\ge 1$,  $v_{x-1}\in \widehat{\HD}$,  and hence that $v_x$ is nonincreasing.

 We set $g(1)=\lim_{t\to 1}g(t)$. Then there is a Borel measure $\mu$ on $[0,1]$ such that $g(t)= \mu([t,1])$. Note that $g\in L^1[0,1]$,
 $$\hat{g}(t) =\int_t^1 g(s) ds \approx \int_t^1 \frac{(1-s)^{\alpha+x}}{v_x(s)} ds\le \int_t^1 \frac{(1-s)^{\alpha+x}}{v_x(\frac{1+s}{2})} ds , \ t\in [t_0,1).$$
 Hence $$\hat{g}(t) \lesssim 2^{\alpha+x+1} \int_{\frac{1+t}{2}}^1 \frac{(1-u)^{\alpha+x}}{v_x(u)} du \approx \hat{g}(\frac{1+t}{2}).$$
 This implies that $g\in \widehat{\HD}$.

 Since $v_{x-1}\in \widehat{\HD}$ we also have $v_{x}\in \widehat{\HD}$.  Thus Lemma A of \cite{PelaezRattya} with $v_x, g\in \widehat{\HD}$ implies that $\int_0^1 t^n g(t)dt \approx \int_{1-\frac{1}{n}}^1 g(t) dt$ and $\int_0^1 t^n v_x(t)dt \approx \int_{1-\frac{1}{n}}^1 v_{x}(t)dt$. Furthermore, by Lemma \ref{doublingweights} applied with $v_{x-1}$ we have $\int_{t}^1 v_{x}(s)ds\approx (1-t)v_x(t)$. Also noting that Lemma \ref{momentsByParts} implies that $\int_0^1t^n d\mu \approx n \int_0^1 t^ng(t)dt$ we obtain
 \begin{align}\label{momentsOfvx}
 \int_0^1 t^n v_x(t)dt \int_{[0,1]} t^n d\mu(t) &\approx v_x(1-\frac{1}{n}) \int_{1-\frac{1}{n}}^1\frac{(1-s)^{\alpha+x}}{v_x(s)}ds.
 \end{align}
 Since $v_x$ is nonincreasing we immediately obtain
 \[
 \int_0^1 t^n v_x(t)dt \int_{[0,1]} t^n d\mu(t) \ge c \int_{1-\frac{1}{n}}^1(1-s)^{\alpha+x}ds \approx n^{-\alpha-x-1}.
 \]

 By the hypothesis the ratio $\frac{v_x(t)}{(1-t)^{\alpha+x}}$ is almost increasing, hence there is  $C>0$ such that for $s\ge  1-\frac{1}{n}$
 $$v_x(1-\frac{1}{n}) \le C n^{-\alpha-x} \frac{v_x(s)}{(1-s)^{\alpha+x}}. $$ Thus we may substitute this inequality into (\ref{momentsOfvx}), and this concludes the proof of the Theorem.
\end{proof}

We will say that a radial weight $\om$ on $\Bd$ is weakly normal (of order $\alpha>-1$), if the associated $L^1[0,1]$-function $v$ (see Section \ref{section_weights_basics}) is weakly normal (of order $\alpha>-1$). For weakly normal radial weights Lemmas \ref{vxdoublingweights} and \ref{BekolleAndDoubling} imply that there is $x_0 \ge 0$ such that $\om_{x+x_0}\approx (1-|z|^2)^{2x}\om_{x_0}$ for all $x\ge 0$.

 \begin{example}\label{ExpExample} Examples of weights that are not weakly normal are $\om(z)=(1-|z|^2)^\beta e^{\frac{-1}{1-|z|^2}}$,  $\beta \in \R$. One checks with Lemma \ref{almostDecreasing} that such a weight would be weakly normal, if and only if $v(t)= (1-t)^\beta e^{\frac{-1}{1-t}}$ is weakly normal. We calculate
\begin{align*} (1-t)^2v(t)&= -\int_t^1 \frac{d}{ds} (1-s)^{\beta+2} e^{\frac{-1}{1-s}}ds\\
&=\int_t^1 (1+(\beta+2)(1-s)) v(s)ds\\
&\approx \int_t^1 v(s)ds=\vhat(t)
\end{align*}
Iteration of this shows that for each positive integer $N$ we have $(1-t)^{2N}v(t) \approx v_N(t)$ and hence $\om_N \approx (1-|z|^2)^{4N}\om$.   For more on such weights, see \cite{PavlovicPelaez}.
\end{example}

\section{Radial weights and complete Pick spaces}
\label{SectionRadialPick}
Our result on radially weighted Besov spaces that are complete Pick spaces is based on the following Lemma.
\begin{lemma}\label{Pick kernel} Let $\mu$ be a probability measure on $[0,1]$. Then there are $c_n\ge 0$ such that
$$\int_{[0,1]} \frac{1}{1-tz} d\mu(t)= \frac{1}{1-\sum_{n=1}^\infty c_n z^n}$$ for all $|z|<1$.
\end{lemma}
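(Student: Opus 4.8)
The plan is to expand the left-hand side into a power series in $z$ and recognize the statement as the fact that the moment sequence of $\mu$ is a renewal sequence (a theorem of Kaluza). First I would write, for $|z|<1$ and $t \in [0,1]$, $\frac{1}{1-tz} = \sum_{n=0}^\infty t^n z^n$ and integrate term by term (justified since $\mu$ is finite and $|tz| \le |z| < 1$) to obtain
$$\int_{[0,1]} \frac{1}{1-tz}\, d\mu(t) = \sum_{n=0}^\infty m_n z^n =: F(z), \qquad m_n = \int_{[0,1]} t^n \, d\mu(t).$$
Since $\mu$ is a probability measure, $m_0 = 1$, and clearly $0 \le m_n \le m_{n-1} \le 1$. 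Comparing with the desired identity $F(z)\big(1 - \sum_{n\ge1} c_n z^n\big) = 1$ and matching coefficients, the $c_n$ are \emph{forced} to satisfy the convolution recursion $m_n = \sum_{k=1}^n c_k m_{n-k}$ for $n \ge 1$; since $m_0 = 1$ this determines the $c_n$ uniquely. The whole problem thus reduces to showing $c_n \ge 0$. (The degenerate case $\mu = \delta_0$ gives $F \equiv 1$ and $c_n = 0$, so I may assume $m_n > 0$ for all $n$.)

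The key step, and the main obstacle, is the nonnegativity of the $c_n$. The entry point is the \emph{log-convexity} of the moment sequence: by Cauchy--Schwarz,
$$m_n = \int t^{(n-1)/2} t^{(n+1)/2}\, d\mu \le m_{n-1}^{1/2} m_{n+1}^{1/2},$$
so $m_n^2 \le m_{n-1}m_{n+1}$, i.e. the ratios $m_n/m_{n-1}$ are nondecreasing in $n$. I would then prove $c_n \ge 0$ by strong induction. The base case is $c_1 = m_1 \ge 0$. Assuming $c_1,\dots,c_{n-1}\ge 0$, monotonicity of the ratios gives $\frac{m_{n-k}}{m_{n-1-k}} \le \frac{m_n}{m_{n-1}}$ for $1 \le k \le n-1$, whence
$$\sum_{k=1}^{n-1} c_k m_{n-k} \le \frac{m_n}{m_{n-1}} \sum_{k=1}^{n-1} c_k m_{n-1-k} = \frac{m_n}{m_{n-1}}\, m_{n-1} = m_n,$$
where the inequality uses $c_k \ge 0$ and the last equality is the recursion at level $n-1$. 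Therefore $c_n = m_n - \sum_{k=1}^{n-1} c_k m_{n-k} \ge 0$, completing the induction.

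Finally I would tidy up the analytic bookkeeping. Since $m_{n-k} \ge m_n$ (the moments are nonincreasing) and $c_k \ge 0$, the recursion gives $m_n \ge m_n \sum_{k=1}^n c_k$, hence $\sum_{k=1}^\infty c_k \le 1$; in particular $G(z) := \sum_{n\ge1} c_n z^n$ converges absolutely on $|z|\le 1$ while $F(z) = \sum_n m_n z^n$ converges on $|z|<1$. Both series converge absolutely for $|z| < 1$, so forming the Cauchy product and invoking the recursion yields $F(z) G(z) = \sum_{n\ge1} m_n z^n = F(z) - 1$, that is $F(z)\big(1 - G(z)\big) = 1$ for all $|z| < 1$. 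This simultaneously shows $1 - G(z) \ne 0$ and gives the claimed identity $\int_{[0,1]} \frac{1}{1-tz}\, d\mu(t) = \frac{1}{1 - \sum_{n\ge1} c_n z^n}$.
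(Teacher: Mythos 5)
Your proof is correct and takes essentially the same route as the paper: both reduce the lemma to the log-convexity of the moment sequence $m_n=\int_{[0,1]}t^n\,d\mu$ (you via Cauchy--Schwarz, the paper via H\"older applied to $F(s)=\int t^s\,d\mu$), so that the ratios $m_{n+1}/m_n$ are nondecreasing, and then invoke Kaluza's lemma. The only difference is that the paper cites Kaluza's lemma as a black box (Lemma 7.38 of Agler--McCarthy), whereas you prove it inline by the standard induction on the convolution recursion $m_n=\sum_{k=1}^n c_k m_{n-k}$, additionally handling the degenerate case $\mu=\delta_0$, the bound $\sum_{n\ge 1} c_n\le 1$, and the nonvanishing of $1-\sum_{n\ge 1} c_n z^n$ --- all correctly.
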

It follows that $k_w(z)=\int_{[0,1]} \frac{1}{1-t\la z, w\ra} d\mu(t)$ defines a normalized Pick kernel in $\Bd$.
\begin{proof} Let $F(s)=\int_{[0,1]} t^s d\mu(t)$ be the moment generating function for this set-up. It is well-known that $\log F(s)$ defines a convex function on $[0,\infty)$. In fact, it easily follows from H\"{o}lder's inequality that $F(\lambda s_1 +(1-\lambda)s_2)\le F(s_1)^\lambda F(s_2)^{1-\lambda}$ for all $s_1,s_2\in [0,\infty)$ and $0<\lambda<1$. The logarithmic convexity of $F$ follows from this. Thus for each $n\ge 0$ we have $$\log F(n+1)-\log F(n) \le \log F(n+2)-\log F(n+1),$$ which is equivalent to $\frac{F(n+1)}{F(n)}$ being nondecreasing in $n$. Now the conclusion of the lemma follows from Kaluza's lemma (see e.g. \cite{AgMcC}, Lemma 7.38) since
$\int_{[0,1]} \frac{1}{1-tz} d\mu(t)=\sum_{n=0}^\infty F(n) z^n$.
\end{proof}

\begin{theorem} \label{PickandBekolle} If $\om$ is a weakly normal radial weight  of order $\alpha >-1$, then $B^s_\om$ is a complete Pick space for all $s \ge \frac{\alpha+d}{2}$.
\end{theorem}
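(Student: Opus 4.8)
The plan is to show that, after passing to an equivalent norm, the reproducing kernel of $B^s_\om$ is one of the complete Pick kernels produced by Lemma \ref{Pick kernel}. Recall that the kernel of $B^s_\om$ has the form $\sum_{n\ge 0} b_n \langle z,w\rangle^n$ with $b_n \approx n^{-2s+d-1} a_n(\om)^{-1}$ for $n\ge 1$, where $a_n(\om)=\int_0^1 t^n v(t)\,dt$ and $v\in L^1[0,1]$ is the function associated to $\om$. Since $\om$ is weakly normal of order $\alpha$, Theorem \ref{existenceOfMu} furnishes a positive Borel measure $\mu$ on $[0,1]$ whose moments $m_n=\int_{[0,1]}t^n\,d\mu$ satisfy $a_n(\om)\,m_n \approx n^{-\alpha-1}$. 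Substituting $a_n(\om)^{-1}\approx n^{\alpha+1}m_n$, I obtain
$$b_n \approx n^{-2s+d-1}\,n^{\alpha+1}\, m_n = n^{-\gamma}\, m_n, \qquad \gamma := 2s-d-\alpha,$$
and the condition $\gamma \ge 0$ is exactly the hypothesis $s \ge (\alpha+d)/2$.

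Next I would realize $n^{-\gamma}m_n$ (up to $\approx$) as the moment sequence of a finite positive measure on $[0,1]$. When $\gamma = 0$ this is immediate, since $b_n \approx m_n$ already. When $\gamma > 0$, I would apply the index shift to $\mu$: setting $v^\mu_\gamma(t)=\int_{[t,1]}\frac{(s-t)^{\gamma-1}}{\Gamma(\gamma)}\,d\mu(s)\in L^1[0,1]$, Lemma \ref{momentsByParts} gives $\int_0^1 t^n v^\mu_\gamma(t)\,dt \approx n^{-\gamma}m_n$. The positivity hypothesis $v_1^\mu(t)=\mu([t,1])>0$ needed for that lemma holds: from $a_n(\om)\le\|v\|_1$ we get $m_n \gtrsim n^{-\alpha-1}$, which decays only polynomially, so $\mu$ cannot be supported in any $[0,t_0]$ with $t_0<1$. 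In both cases there is a finite positive measure $\rho$ on $[0,1]$ — namely $\mu$ when $\gamma=0$ and $v^\mu_\gamma\,dt$ when $\gamma>0$ — with $\int_{[0,1]} t^n\,d\rho \approx b_n$ for all $n$. Rescaling $\rho$ to a probability measure changes every moment by the same factor, so the comparison $\int_{[0,1]} t^n\,d\rho \approx b_n$ persists.

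Finally, Lemma \ref{Pick kernel} shows that $K_w(z)=\sum_{n\ge 0}\big(\int_{[0,1]} t^n\,d\rho\big)\langle z,w\rangle^n=\int_{[0,1]}\frac{1}{1-t\langle z,w\rangle}\,d\rho(t)$ is a normalized complete Pick kernel on $\Bd$. Let $\HH_K$ be its reproducing kernel Hilbert space. Since $\HH_K$ is unitarily invariant with $\|z_1^n\|^{-2}_{\HH_K}=\int_{[0,1]} t^n\,d\rho \approx b_n = \|z_1^n\|^{-2}_{B^s_\om}$, the diagonal weight sequences of the two spaces are comparable, whence $\HH_K = B^s_\om$ with equivalent norms. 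Thus $K$ is the reproducing kernel of $B^s_\om$ for an equivalent norm, and $B^s_\om$ is a complete Pick space by definition.

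The conceptual heart — and the place where the real work sits — is Theorem \ref{existenceOfMu}: weak normality of order $\alpha$ is precisely what produces a measure $\mu$ whose moments are comparable to $n^{-\alpha-1}/a_n(\om)$. Once $\mu$ is available, the remainder is the pleasant observation that multiplying a moment sequence by the polynomial factor $n^{-\gamma}$ keeps it asymptotically a moment sequence, via the same index shift $v \mapsto v_\gamma$ used throughout the paper, so that Kaluza's lemma (packaged as Lemma \ref{Pick kernel}) applies. The only delicate points are verifying the positivity hypothesis of Lemma \ref{momentsByParts} and confirming that comparability of the diagonal norms $\|z_1^n\|$ genuinely transfers the complete Pick property across an equivalent norm.
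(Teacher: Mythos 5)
Your proof is correct and follows essentially the same route as the paper: produce a positive measure whose moments are comparable to the kernel coefficients of $B^s_\om$ using Theorem \ref{existenceOfMu}, then conclude via Kaluza's lemma (packaged as Lemma \ref{Pick kernel}) and comparability of the norms of homogeneous components. The only divergence is bookkeeping: the paper absorbs the gap $\gamma = 2s-d-\alpha$ by observing that weak normality of order $\alpha$ implies weak normality of order $\alpha' = 2s-d \ge \alpha$ and applying Theorem \ref{existenceOfMu} at order $\alpha'$ directly, whereas you apply it at order $\alpha$ and then shift the resulting moment sequence by $n^{-\gamma}$ through Lemma \ref{momentsByParts}, together with your (valid) verification of its positivity hypothesis; both routes are sound.
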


\begin{proof} Let $v$ be the $L^1[0,1]$-function associated with $\om$ and set $\alpha'=2s-d\ge \alpha$. Then $v$ is a weakly normal weight of order $\alpha'$.

For $n\in \N_0$ let $a_n=a_n(\om)= \int_0^1 t^n v(t)dt$, and choose a probability measure $\mu$ such that $b_n=\int_{[0,1]}t^n d\mu\approx \left(n^{\alpha'+1}{a_n}\right)^{-1}= \left(n^{2s-d+1}{a_n}\right)^{-1}.$ This can be done by Theorem \ref{existenceOfMu}. Define
$$k_w(z) = \int_{[0,1]} \frac{1}{1-t\la z, w\ra} d\mu(t).$$
Then $k_w(z)=\sum_{n=0}^\infty b_n \la z,w\ra^n$ is a normalized complete Pick kernel by Lemma \ref{Pick kernel}. Let $\HH$ be the reproducing kernel Hilbert space with kernel $k$, and let $\|f\|_{H^2_d}$ denote the Drury-Arveson norm of a function $f=\sum_n f_n$. It is easy to check and well-known that
$\|f\|^2_{\HH}= \sum_{n=0}^\infty \frac{1}{b_n}\|f_n\|^2_{H^2_d}$.
Recall that for  homogeneous polynomials $f_n$ of degree $n$ we have $$\|f_n\|^2_{H^2_d} = c_n \|f_n\|^2_{H^2(\dB)}, \text{ where }c_n \approx (n+1)^{d-1},$$
see for example formula (2.2) of \cite{RiSunkes}.
We now apply the above and the definition of the $B^s_\om$-norm to obtain
\begin{align*} \|f\|^2_{B^s_\omega}&=|f(0)|^2 + \sum_{n=1}^\infty n^{2s}a_n\|f_n\|^2_{H^2(\dB)}\\
&\approx |f(0)|^2+\sum_{n=1}^\infty n^{2s-d+1}a_n\|f_n\|^2_{H^2_d}\\
&\approx |f(0)|^2+ \sum_{n=1}^\infty \frac{1}{b_n}\|f_n\|^2_{H^2_d}\\
&= \|f\|^2_{\HH}.
\end{align*}
\end{proof}

\begin{corollary}   If $\om$ is a weakly normal radial weight  of order $\alpha >-1$, then for every $s_0\ge (\alpha+d)/2$, there is a positive nonincreasing continuous function $g\in \widehat{\HD}$ such that for every $x,y \ge 0$
$$k_w(z) =\int_0^1 \frac{(1-t)^x}{(1-t\la z, w\ra)^{x+3+2y}} \hat{g}(t) dt$$ is a reproducing kernel for $B^{s_0-y}_\om$.
\end{corollary}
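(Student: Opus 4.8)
The plan is to reuse the function $g$ and the measure $\mu$ constructed in the proof of Theorem~\ref{existenceOfMu}, to expand the proposed kernel as a power series in $\la z,w\ra$, and to compare its coefficients with those of the reproducing kernel of $B^{s_0-y}_\om$. Let $v\in L^1[0,1]$ be the weight associated with $\om$ and put $a_n=a_n(\om)=\int_0^1 t^n v(t)\,dt$. Since $s_0\ge(\alpha+d)/2$, the weight $v$ is weakly normal of order $\alpha'=2s_0-d\ge\alpha>-1$, so Theorem~\ref{existenceOfMu} supplies a positive nonincreasing continuous function $g\in\widehat{\HD}$ together with a positive Borel measure $\mu$ on $[0,1]$ with $g(t)=\mu([t,1])$, for which the moments $b_n=\int_{[0,1]}t^n\,d\mu$ satisfy $b_n\approx n^{-\alpha'-1}a_n^{-1}=\big(n^{2s_0-d+1}a_n\big)^{-1}$. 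By the coefficient formula recorded in Section~\ref{subsec:basics}, these $b_n$ are, up to equivalence, the Taylor coefficients of the reproducing kernel of $B^{s_0}_\om$. This $g$, which depends on neither $x$ nor $y$, is the function asserted in the statement.

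Next I would expand the kernel. Fix $x,y\ge0$ and set $m=x+3+2y$. For $|\la z,w\ra|<1$ the binomial series $(1-t\la z,w\ra)^{-m}=\sum_{n\ge0}\binom{n+m-1}{n}(t\la z,w\ra)^n$ converges absolutely and may be integrated term by term against the finite positive measure $(1-t)^x\hat g(t)\,dt$, giving
\begin{equation*}
  k_w(z)=\int_0^1\frac{(1-t)^x}{(1-t\la z,w\ra)^{m}}\,\hat g(t)\,dt
  =\sum_{n=0}^\infty c_n\,\la z,w\ra^n,\qquad
  c_n=\binom{n+m-1}{n}\int_0^1 t^n(1-t)^x\hat g(t)\,dt .
\end{equation*}
Each $c_n$ is strictly positive since $\hat g>0$, so $k$ is a unitarily invariant positive definite kernel on $\Bd$.

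The crux is the asymptotic identity $c_n\approx n^{2y}b_n$, in which the dependence on $x$ must disappear. Because $g\in\widehat{\HD}$, Lemma~\ref{vxdoublingweights} gives $g_{x+1}(t)\approx(1-t)^x\hat g(t)$ on $[0,1)$, and hence $\int_0^1 t^n(1-t)^x\hat g(t)\,dt\approx\int_0^1 t^n g_{x+1}(t)\,dt$. Let $w_x$ denote the function associated with $\mu$ as in Lemma~\ref{momentsByParts}; then $w_1=g$, so the semigroup law gives $w_{x+2}=(w_1)_{x+1}=g_{x+1}$, and since $w_1=g>0$ on $[0,1)$, Lemma~\ref{momentsByParts} (with parameter $x+2$) yields $\int_0^1 t^n g_{x+1}(t)\,dt\approx n^{-(x+2)}b_n$. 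Finally $\binom{n+m-1}{n}=\Gamma(n+m)/(\Gamma(n+1)\Gamma(m))\approx n^{m-1}=n^{x+2+2y}$ by Stirling. Multiplying these three estimates, the factors $n^{x+2}$ cancel and we obtain $c_n\approx n^{2y}b_n$, independently of $x$. I expect this bookkeeping---checking that the binomial prefactor exactly absorbs the $n^{-(x+2)}$ coming from the moment estimate---to be the main point of the argument, and it relies crucially on $g\in\widehat{\HD}$ so that Lemma~\ref{vxdoublingweights} applies.

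Finally I would compare with $B^{s_0-y}_\om$. From $b_n\approx n^{-2s_0+d-1}a_n^{-1}$ we get $c_n\approx n^{2y}b_n\approx n^{-2(s_0-y)+d-1}a_n^{-1}$, which by the coefficient formula of Section~\ref{subsec:basics} is, up to equivalence, the sequence of reproducing kernel coefficients of $B^{s_0-y}_\om$. As in the proof of Theorem~\ref{PickandBekolle}, the reproducing kernel Hilbert space with unitarily invariant kernel $\sum_n c_n\la z,w\ra^n$ carries the norm $\|f\|^2=\sum_n c_n^{-1}\|f_n\|^2_{H^2_d}$, so replacing the $c_n$ by an equivalent sequence alters this norm only up to equivalence. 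Therefore the space with reproducing kernel $k$ coincides with $B^{s_0-y}_\om$ as a vector space with an equivalent norm; in the convention of the paper, $k$ is a reproducing kernel for $B^{s_0-y}_\om$, as claimed.
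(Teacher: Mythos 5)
Your proof is correct and follows essentially the same route as the paper: both take the measure $\mu$ and the function $g=w_1$ from the proof of Theorem~\ref{existenceOfMu}, use Lemma~\ref{vxdoublingweights} to get $w_{x+2}(t)\approx(1-t)^x\hat g(t)$ and Lemma~\ref{momentsByParts} to obtain the coefficient asymptotics $c_n\approx n^{2y}\int_{[0,1]}t^n\,d\mu$, and then conclude by comparing kernel coefficients. The only difference is that you spell out the final coefficient comparison with the $B^{s_0-y}_\om$ norm, which the paper dismisses as ``easy to see.''
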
 By this we mean that there is an alternate norm on $B^{s_0-y}_\om$ which is equivalent to the natural norm and such that $k_w(z)$ is the reproducing kernel for the space under the alternate norm.
\begin{proof} Since $s_0\ge (\alpha+d)/2$ Theorem \ref{PickandBekolle} implies that the space $B^{s_0}_\om$ is a complete Pick space. Furthermore, the proof of Theorem \ref{PickandBekolle} shows that $$k^{s_0}_w(z) = \int_0^1\frac{1}{1-t \la z, w\ra}d\mu(t)=\sum_{n=0} \la z, w\ra^n \int_{[0,1]}t^n d\mu(t)$$ is a reproducing kernel for $B^{s_0}_\om$. The existence of the measure $\mu$ was established by means of Theorem \ref{existenceOfMu}, whose proof  shows that $\mu$ can be chosen so that $g(t)=\mu([t,1])$ is continuous and satisfies $g\in \widehat{\HD}$. For $x\ge 0$ let $w_x$ be the $L^1[0,1]$-function associated with $\mu$ as in Lemma \ref{momentsByParts}, then $w_1=g,$ $w_2=\hat{g}$, and Lemma \ref{vxdoublingweights} implies that $w_{x+2}(t) \approx (1-t)^x \hat{g}(t)$. Now consider the power series
$$k_w(z) =\int_0^1 \frac{(1-t)^x}{(1-t\la z, w\ra)^{x+3+2y}} \hat{g}(t) dt= \sum_{n=0}^\infty a_n \la z, w\ra ^n,$$
where \begin{align*} a_n &\approx (n+1)^{x+2+2y} \int_0^1t^n(1-t)^x \hat{g}(t) dt\\
&\approx (n+1)^{x+2+2y} \int_0^1t^n w_{x+2}(t)dt\\
& \approx (n+1)^{2y} \int_{[0,1]}t^n d\mu(t) \end{align*}
by Lemma \ref{momentsByParts}.
It is easy to see that if $k^{s_0}_w(z)$ is a reproducing kernel for $B^{s_0}_\om$, then $k_w(z)$ is a reproducing kernel for $B^{s_0-y}_\om$.
\end{proof}
\begin{corollary}\label{kernelgrowth} Let $\om$ be a weakly normal radial weight on $\Bd$. For $s\in \R$ let $k^s_w(z)$ be the reproducing kernel for $B^s_\om$.

Then for each $s\le t$ there is $c>0$ such that $k^s_z(z) \le c\frac{k^t_z(z)}{(1-|z|^2)^{2(t-s)}}$ for all $z\in \Bd$.
\end{corollary}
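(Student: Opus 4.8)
The plan is to reduce the statement to the diagonal values of the two kernels and then to an elementary inequality for power series with nonnegative, doubling coefficients. Writing $k^s_w(z)=\sum_{n\ge 0}b^{(s)}_n\langle z,w\rangle^n$ as in Section~\ref{subsec:basics}, we have $b^{(s)}_n=\|z_1^n\|^{-2}_{B^s_\om}\approx n^{d-1-2s}a_n(\om)^{-1}$, so that $b^{(s)}_n=n^{2(t-s)}b^{(t)}_n$ for $n\ge 1$ and $b^{(s)}_0=b^{(t)}_0$. Since $\langle z,z\rangle=|z|^2$, setting $x=|z|^2\in[0,1)$, $\gamma=2(t-s)\ge 0$ and $c_n=b^{(t)}_n\ge 0$, the claim $k^s_z(z)\le c\,(1-|z|^2)^{-2(t-s)}k^t_z(z)$ becomes
$$\sum_{n\ge 0}(n+1)^\gamma c_n x^n\le C\,(1-x)^{-\gamma}\sum_{n\ge 0}c_n x^n,\qquad x\in[0,1).$$
I would prove this by expanding $(1-x)^{-\gamma}=\sum_{m\ge 0}\binom{m+\gamma-1}{m}x^m$, whose coefficients are nonnegative because $\gamma\ge 0$, and comparing the two nonnegative power series coefficientwise; since $x\ge 0$ this reduces the display to
$$(N+1)^\gamma c_N\le C\sum_{n=0}^N\binom{N-n+\gamma-1}{N-n}c_n\qquad(N\in\N_0).$$
The case $\gamma=0$ is trivial (then $k^s=k^t$), and $N=0$ is immediate since both sides equal $c_0$.

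The substance of the argument is a doubling estimate for the moments $a_n=a_n(\om)$. I would first record that any $w\in\widehat{\HD}$ has doubling moments, $a_n(w)\le C\,a_{2n}(w)$: by Lemma~A of \cite{PelaezRattya} one has $a_n(w)\approx \widehat w(1-1/n)$, and the defining inequality $\widehat w(1-1/n)\le C\,\widehat w\!\left(1-\tfrac{1}{2n}\right)$ for $w\in\widehat{\HD}$ gives the claim after recognizing $\widehat w(1-1/(2n))\approx a_{2n}(w)$. For our weakly normal $\om$, Lemma~\ref{BekolleAndDoubling} yields $x_0\ge 0$ with $v_{x_0}\in\widehat{\HD}$, and since $a_n(v)\approx n^{x_0}a_n(v_{x_0})$ by Lemma~\ref{momentsByParts}, the moments $a_n$ themselves are doubling. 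This is the key regularity input. Because $c_n\approx n^{d-1-2t}a_n^{-1}$, the doubling bound controls $c_n$ from below on dyadic ranges: for $N/2\le n\le N$ one has $(n/N)^{d-1-2t}\ge 2^{-|d-1-2t|}$, while monotonicity $a_N\le a_n\le a_{N/2}$ of the moments together with $a_{N/2}\le C a_N$ gives $a_N/a_n\ge a_N/a_{N/2}\ge C^{-1}$, whence $c_n\ge C^{-1}c_N$ for all $n\in[N/2,N]$.

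With this in hand the coefficient inequality follows by keeping only the terms with $\lfloor N/2\rfloor\le n\le N$:
$$\sum_{n=0}^N\binom{N-n+\gamma-1}{N-n}c_n\ge C^{-1}c_N\sum_{0\le m\le\lfloor N/2\rfloor}\binom{m+\gamma-1}{m}=C^{-1}c_N\binom{\lfloor N/2\rfloor+\gamma}{\lfloor N/2\rfloor}\approx c_N N^\gamma,$$
where I used the (generalized) hockey-stick identity and $\binom{M+\gamma}{M}\approx M^\gamma/\Gamma(\gamma+1)$ from Stirling's formula. Summing against $|z|^{2N}\ge 0$ and reading the right-hand side as the product series $(1-|z|^2)^{-\gamma}k^t_z(z)$ then yields the asserted bound. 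The main obstacle is the doubling step for the moments; once $a_n\approx a_{2n}$ is established from weak normality via $\widehat{\HD}$, the coefficient comparison and the combinatorial sum are routine, and one need only be mildly careful with the noninteger endpoint $N/2$ (replaced throughout by $\lfloor N/2\rfloor$) and with the boundary terms $N=0$ and $\gamma=0$.
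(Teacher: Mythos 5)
Your proof is correct, and it takes a genuinely different route from the paper's. The paper deduces Corollary~\ref{kernelgrowth} in two lines from the integral representation it has just established (the corollary following Theorem~\ref{PickandBekolle}, which rests on Theorem~\ref{existenceOfMu} and the Kaluza-type Lemma~\ref{Pick kernel}): choosing $s_0\ge \max(t,(\alpha+d)/2)$ and $x=0$ there gives
\begin{equation*}
k^s_z(z)\approx \int_0^1 \frac{\hat g(u)}{(1-u|z|^2)^{3+2(s_0-s)}}\,du,
\qquad
k^t_z(z)\approx \int_0^1 \frac{\hat g(u)}{(1-u|z|^2)^{3+2(s_0-t)}}\,du,
\end{equation*}
and since $1-u|z|^2\ge 1-|z|^2$ one pulls the factor $(1-u|z|^2)^{-2(t-s)}\le (1-|z|^2)^{-2(t-s)}$ out of the first integral. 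You instead work at the level of Taylor coefficients, using the exact relation $b^{(s)}_n=n^{2(t-s)}b^{(t)}_n$, the binomial expansion of $(1-x)^{-\gamma}$ with nonnegative coefficients, and a dyadic-block lower bound $c_n\gtrsim c_N$ for $N/2\le n\le N$; the only nontrivial analytic input is the moment doubling $a_n\lesssim a_{2n}$, which you correctly extract from weak normality via Lemma~\ref{BekolleAndDoubling} (some $v_{x_0}\in\widehat{\HD}$), Lemma~A of Pel\'aez--R\"atty\"a in the form $a_n(w)\approx \widehat w(1-1/n)$, and the index shift of Lemma~\ref{momentsByParts} --- exactly the ingredients the paper deploys inside the proof of Theorem~\ref{existenceOfMu}, but used directly rather than through the measure $\mu$ and the kernel representation. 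The bookkeeping points you flag (the generalized hockey-stick identity via Pascal's rule for real $\gamma$, the floor at $N/2$, the cases $\gamma=0$ and $N\in\{0,1\}$) all check out, as does the positivity needed for the coefficientwise comparison. What each approach buys: yours is elementary and self-contained modulo the doubling estimate, proves the formally stronger coefficientwise inequality, and in fact only needs the hypothesis that the moments of $\om$ are doubling (which your saddle-point-type heuristic for $\om=e^{-1/(1-|z|^2)}$ correctly suggests is essential --- the conclusion genuinely fails without it); the paper's proof is shorter given the representation already in hand and keeps the corollary inside the complete-Pick framework that Section~\ref{SectionRadialPick} has built.
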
 \begin{proof}  If $v$ is weakly normal normal of order $\alpha>-1$, then choose $s_0 \ge \max(t, (\alpha+d)/2)$. Then by the previous corollary with $x=0$ we have
$$k_z^s(z) \approx \int_0^1 \frac{\hat{g}(u)}{(1-u|z|^2)^{3+2(s_0-s)}}du$$ and  $$k_z^t(z) \approx \int_0^1 \frac{\hat{g}(u)}{(1-u|z|^2)^{3+2(s_0-t)}}du.$$ The Corollary follows from this.
\end{proof}



\section{Further results about multipliers of $B^s_\om$}
\label{SectionFurther}
Some of the main results from the previous sections are about bounded column operators on weighted Besov spaces with  radial weights. In this section we collect more facts about such operators.

Let $\alpha \ge 0$ be a real parameter. We will need to use the growth space $A^{-\alpha}(\ell_2)$ defined by
$$A^{-\alpha}(\ell_2) = \{\Phi=(\varphi_1, \varphi_2,...), \varphi_i\in \Hol(\Bd), \|\Phi\|_{A^{-\alpha}(\ell_2)}<\infty \},$$ where
$$\|\Phi\|^2_{A^{-\alpha}(\ell_2)}=
\sup_{z\in \Bd} (1-|z|^2)^{2\alpha} \sum_{i=1}^\infty |\varphi_i(z)|^2.$$

If $\alpha=0$, then we just obtain the bounded analytic functions and we observe $H^\infty(\C,\ell_2)=A^{0}(\ell_2)$ and $\|\Phi\|_\infty=\|\Phi\|_{A^{0}(\ell_2)}$.

The following lemma is well-known.
\begin{lemma}\label{GrowthAndDerivative} Let $\gamma>0$, $n\in \N$. Then there is a $c>0$ such that for all sequences of analytic functions $\Phi=(\varphi_1,\varphi_2,...)$ on $\Bd$ we have
$$\frac{1}{c}\|\Phi\|_{A^{-\gamma}(\ell_2)} \le \|\Phi(0)\|_{\ell_2}+ \|R^n \Phi\|_{A^{-\gamma-n}(\ell_2)} \le c \|\Phi\|_{A^{-\gamma}(\ell_2)}.$$
and hence $\Phi \in A^{-\gamma}(\ell_2)$ if and only if $R^n \Phi \in A^{-\gamma-n}(\ell_2)$
and $\Phi(0) \in \ell_2$.

Furthermore, if $\Phi \in H^\infty(\C,\ell_2)$, then $R^n\Phi \in A^{-n}(\ell_2)$ and $$\|R^n \Phi\|_{A^{-n}(\ell_2)} \le c \|\Phi\|_{H^\infty}. $$ \end{lemma}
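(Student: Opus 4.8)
The plan is to prove this as a standard fact about growth spaces, reducing everything to the scalar estimate relating the growth of an analytic function to the growth of its radial derivative. The key observation is that the radial derivative operator $R$ acts diagonally on homogeneous expansions ($Rf = \sum_{n\ge 1} n f_n$), so everything commutes with the $\ell_2$-direct sum structure, and the estimates can be understood coefficient-by-coefficient at each fixed point $z \in \Bd$.

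First I would establish the scalar version: for a single analytic function $\varphi$ on $\Bd$, one has
\begin{equation*}
 \frac{1}{c}\sup_{z}(1-|z|^2)^\gamma |\varphi(z)| \le |\varphi(0)| + \sup_z (1-|z|^2)^{\gamma+n}|R^n\varphi(z)| \le c\sup_z (1-|z|^2)^\gamma|\varphi(z)|.
\end{equation*}
The right-hand inequality follows from the integral representation of $R^n\varphi$ in terms of $\varphi$ together with the standard estimate $\int_{\Bd} \frac{dV(w)}{|1-\la z,w\ra|^{d+1+c}} \approx (1-|z|^2)^{-c}$ for $c>0$; this is the classical Forelli--Rudin type estimate. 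The left-hand inequality is the recovery direction: one writes $\varphi$ in terms of $R^n\varphi$ by inverting $R^n$ on the homogeneous expansion, again represented as an integral against a reproducing-type kernel, and applies the same Forelli--Rudin estimate. These are exactly the well-known growth-space facts for which I would cite the literature (e.g.\ Zhu's book on spaces of holomorphic functions on the ball) rather than reprove in detail.

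The passage to the $\ell_2$-valued statement is then essentially formal. At each fixed $z$, the quantity $\sum_i |\varphi_i(z)|^2 = \|\Phi(z)\|_{\ell_2}^2$ plays the role of $|\varphi(z)|^2$, and since $R$ acts on each component independently and is given by the same integral kernel, the vector-valued integral inequalities hold with the $\ell_2$-norm inside by Minkowski's integral inequality (or simply by applying the scalar kernel estimate to $\|\Phi(w)\|_{\ell_2}$). Thus the two-sided estimate for $\|\Phi\|_{A^{-\gamma}(\ell_2)}$ versus $\|\Phi(0)\|_{\ell_2} + \|R^n\Phi\|_{A^{-\gamma-n}(\ell_2)}$ follows immediately, and the equivalence of membership is an immediate consequence. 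The main obstacle, such as it is, lies in carefully justifying the recovery (left-hand) inequality: inverting $R^n$ requires writing $\varphi = \varphi(0) + \sum_{n\ge 1} n^{-n}(R^n\varphi)_n$ and producing a clean integral kernel with the right decay so that the Forelli--Rudin estimate applies uniformly. Since the lemma is asserted to be well-known, I would keep this brief and lean on the cited references.

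For the final assertion, if $\Phi \in H^\infty(\C,\ell_2) = A^0(\ell_2)$, then taking $\gamma \to 0$ in the right-hand inequality (the case $\gamma = 0$ of the upper bound, which holds since the Forelli--Rudin estimate only needs the exponent on $R^n\Phi$, namely $n>0$, to be positive) gives directly
\begin{equation*}
 \|R^n\Phi\|_{A^{-n}(\ell_2)} = \sup_z (1-|z|^2)^n \Big(\sum_i |R^n\varphi_i(z)|^2\Big)^{1/2} \le c \|\Phi\|_\infty,
\end{equation*}
which is the claimed bound. I would note that this boundary case $\gamma=0$ is exactly why the statement is phrased separately: the two-sided equivalence is stated for $\gamma>0$, but the upper bound alone survives at $\gamma=0$ because it only uses positivity of the shifted exponent $n$.
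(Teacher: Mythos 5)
Your proposal is correct, but it takes a genuinely different route from the paper. The paper's proof is two lines: by induction it suffices to treat $n=1$, and that case follows from the two elementary one-variable formulas $\varphi(z)=\varphi(0)+\int_0^1 R\varphi(tz)\,\frac{dt}{t}$ (the fundamental theorem of calculus along rays, which is exactly the ``clean inversion'' of $R$ you were seeking --- note the integrand is harmless at $t=0$ since $R\varphi(0)=0$, and the bound $\int_0^1(1-t|z|)^{-\gamma-1}\,dt\lesssim(1-|z|)^{-\gamma}$ is where $\gamma>0$ enters) and the slice Cauchy estimate $R\varphi(z)=\frac{1}{2\pi i}\int_{|\lambda-1|=r}\frac{\varphi(\lambda z)}{(\lambda-1)^2}\,d\lambda$ with $r=(1-|z|)/2$, which yields the upper bound for all $\gamma\ge 0$ at once, including the $H^\infty$ case. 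Your route instead invokes the Bergman reproducing representation, differentiation of the kernel, and Forelli--Rudin estimates, with Minkowski's inequality handling the $\ell_2$-values. Both work, and you correctly identify the asymmetry of the statement (recovery needs $\gamma>0$; the differentiation bound survives at $\gamma=0$, which is exactly the ``furthermore'' clause). What the paper's approach buys is economy: no auxiliary parameter $b$, no kernel machinery, and the vector-valued case is automatic because both formulas hold componentwise and the estimates only ever involve $\|\Phi(\cdot)\|_{\ell_2}$; it also sidesteps the one genuinely delicate spot in your sketch, namely producing and estimating an explicit inverse kernel for $R^n$, which you rightly flag as the main obstacle and defer to the literature. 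What your approach buys is generality and robustness: it extends readily to $L^p$-type growth conditions and fractional derivatives, and rests on well-documented facts in Zhu's book. Two small blemishes, neither fatal: in ``$\varphi=\varphi(0)+\sum_{n\ge1}n^{-n}(R^n\varphi)_n$'' the symbol $n$ clashes between the derivative order and the homogeneous degree (you mean $\sum_{k\ge1}k^{-n}(R^n\varphi)_k$), and the recovery direction as written is a plan plus citation rather than a proof --- acceptable for a well-known lemma, but the paper's radial integration formula closes that gap in one line.
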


\begin{proof} By induction it follows that it suffices to prove the case where $n=1$. Furthermore, that case follows easily from the formulas
$\varphi(z)= \varphi(0) + \int_0^1 R\varphi(tz) \frac{dt}{t}$ and $R\varphi(z) =\frac{1}{2\pi i} \int_{|\lambda -1|=r}\frac{\varphi(\lambda z)}{(\lambda-1)^2} d \lambda$, $r=(1-|z|)/2$.
\end{proof}

\begin{theorem} \label{Multiplier-growth-derivatives} Let $\om$ be a radial weight, let    $s, t \in \R$ with $t\le s$, and let $\Phi \in A^{-(s-t)}(\ell_2)$.

Then the following are equivalent:

(a) $\Phi \in \Mult(B^s_\om,B^t_\om({\ell_2}))$,

(b) there exists $n\in \N_0$ such that $R^n\Phi \in \Mult(B^s_\om,B^t_{\om_n}({\ell_2}))$,

(c) for all $n\in \N_0$ we have $R^n\Phi \in \Mult(B^s_\om,B^t_{\om_n}({\ell_2}))$.

In fact, for each $n\in \N$ we have $$\|\Phi\|_{A^{-(s-t)}(\ell_2)}+ \|\Phi\|_{\Mult(B^s_\om,B^t_\om({\ell_2}))} \approx \|\Phi\|_{A^{-(s-t)}(\ell_2)}+ \|R^n\Phi\|_{\Mult(B^s_\om,B^t_{\om_n}({\ell_2}))}.$$
\end{theorem}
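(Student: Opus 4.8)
The plan is to reduce the equivalence of (a), (b), (c) to the single case $n=1$ and then analyze that case via the Leibniz rule for the radial derivative. For the reduction, note first that by Theorem \ref{omxTheorem} we may identify $B^t_{\om_n}=B^{t-n}_\om$, so that (b) and (c) read $R^n\Phi\in\Mult(B^s_\om,B^{t-n}_\om(\ell_2))$. Since differentiating lowers the growth order, Lemma \ref{GrowthAndDerivative} gives $R^j\Phi\in A^{-(s-t)-j}(\ell_2)=A^{-(s-(t-j))}(\ell_2)$ for every $j$, so the standing hypothesis of the theorem is self-propagating under $R$. Hence, once the case $n=1$ is known as an equivalence (with its norm comparison) for every admissible pair of indices, applying it successively to $\Phi,R\Phi,\ldots,R^{n-1}\Phi$ with targets $B^t_\om,B^{t-1}_\om,\ldots,B^{t-n+1}_\om$ chains the equivalences
$$\Phi\in\Mult(B^s_\om,B^t_\om(\ell_2))\iff R\Phi\in\Mult(B^s_\om,B^{t-1}_\om(\ell_2))\iff\cdots\iff R^n\Phi\in\Mult(B^s_\om,B^{t-n}_\om(\ell_2)),$$
with multiplicative control of the constants, yielding all three conditions and the final norm equivalence at once.

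For the case $n=1$ the two ingredients are the exact identity $\|G\|^2_{B^t_\om}=a_0(\om)\|G(0)\|^2_{\ell_2}+\|RG\|^2_{B^{t-1}_\om}$, immediate from \eqref{Bsom-norm} and valid verbatim for $\ell_2$-valued functions, and the product rule $R(\Phi f)=(R\Phi)f+\Phi(Rf)$. The implication (a)$\Rightarrow$(b) is then purely formal: writing $(R\Phi)f=R(\Phi f)-\Phi(Rf)$, the first term lies in $B^{t-1}_\om(\ell_2)$ because $\Phi f\in B^t_\om(\ell_2)$ and $R$ maps $B^t_\om$ contractively into $B^{t-1}_\om$, while the second lies in $B^{t-1}_\om(\ell_2)$ because $Rf\in B^{s-1}_\om$ and, by Corollary \ref{cor:Besov_rectangular_multiplier_inclusion} applied to the passage from $(s,t)$ to $(s-1,t-1)$, we have $\Phi\in\Mult(B^{s-1}_\om,B^{t-1}_\om(\ell_2))$. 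This direction uses no growth hypothesis.

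The substance is the converse (b)$\Rightarrow$(a). By the norm identity, $\Phi f\in B^t_\om(\ell_2)$ with control amounts to bounding $(\Phi f)(0)=\Phi(0)f(0)$ and $R(\Phi f)=(R\Phi)f+\Phi(Rf)$. The constant term is harmless, since evaluating the growth condition at the origin gives $\|\Phi(0)\|_{\ell_2}\le\|\Phi\|_{A^{-(s-t)}(\ell_2)}$, and $(R\Phi)f\in B^{t-1}_\om(\ell_2)$ is exactly hypothesis (b). The genuinely problematic term is $\Phi(Rf)$: controlling it in $B^{t-1}_\om(\ell_2)$ is tantamount to proving $\Phi\in\Mult(B^{s-1}_\om,B^{t-1}_\om(\ell_2))$, i.e. the very assertion sought, only one index lower, so a naive iteration merely descends without closing. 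This is the main obstacle, and it is exactly where the growth hypothesis must enter essentially: the commutator $[R,M_\Phi]=M_{R\Phi}$ annihilates the degree-preserving part of $M_\Phi$, so $R\Phi$ carries no information about the low-frequency behavior of $\Phi$, which is what $A^{-(s-t)}(\ell_2)$ supplies.

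To overcome it I would pass to the monomial basis, where every multiplication operator is lower triangular and $M_{R\Phi}$ arises from $M_\Phi$ via the Schur multiplier $(k,l)\mapsto(k-l)/k$ that kills the main diagonal; reconstructing $M_\Phi$ from $M_{R\Phi}$ thus requires the singular inverse multiplier $k/(k-l)$, expressed analytically by $M_\Phi-M_{\Phi(0)}=\int_0^1\delta_r M_{R\Phi}\delta_{1/r}\,\frac{dr}{r}$, where $\delta_r$ denotes dilation by $r$. The plan is to split $\Phi$ into a low-frequency part, a vector polynomial whose finitely many coefficients are dominated by $\|\Phi\|_{A^{-(s-t)}(\ell_2)}$ and which is automatically a multiplier $B^s_\om\to B^t_\om(\ell_2)$ because each monomial is one (using $a_{n+m}(\om)\approx a_n(\om)$ and $t\le s$), and a high-frequency part handled from hypothesis (b) by a summation by parts over the diagonal bands, the weights $1/(k-l)$ forming a sequence of bounded variation. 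Making this band estimate quantitative, so as to balance the growth control of the near-diagonal against the $R\Phi$-control of the off-diagonal, is the crux and the only step I expect to require real work; everything else is bookkeeping with the norm identity and Corollary \ref{cor:Besov_rectangular_multiplier_inclusion}.
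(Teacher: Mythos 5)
Your treatment of the easy direction (a)$\Rightarrow$(b) and of the reduction-by-chaining structure is sound and agrees with the paper's scheme (the paper likewise proves two one-step inequalities linking levels $n$ and $n+1$ and then inducts, with the hard inequality carrying the additive term $\|\Phi\|_{A^{-(s-t)}(\ell_2)}$). The gap is in the hard direction (b)$\Rightarrow$(a): you correctly isolate the problematic term $\Phi(Rf)$, but your resolution is only a sketch whose decisive estimate you explicitly defer, and that estimate is not routine bookkeeping. The dilation formula $M_\Phi - M_{\Phi(0)} = \int_0^1 \delta_r M_{R\Phi}\delta_{1/r}\,\frac{dr}{r}$ produces, via Kacnelson-type bounds (Lemma \ref{lem:Kacnelson}), operators that are uniformly bounded from $B^s_\om$ into $B^t_{\om_1}(\ell_2) = B^{t-1}_\om(\ell_2)$; but the conclusion requires landing in the strictly smaller space $B^t_\om(\ell_2)$, i.e.\ gaining back exactly one derivative, and nothing in your band decomposition addresses that gain --- it is a reformulation of the original problem. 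The natural implementations (Abel summation against the weights $1/(k-l)$, or uniform bounds on band truncations) run into the unboundedness of triangular truncation, with its attendant logarithmic losses. So the step you call ``the crux'' is in fact the whole theorem, and it is missing.

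Moreover, the ``naive iteration'' you dismiss as merely descending without closing is exactly how the paper closes the argument: the descent terminates after finitely many steps. Iterating the Leibniz rule $N$ times, where $N \ge s$ is an integer, leaves --- besides terms controlled by hypothesis (b) and by Corollary \ref{cor:Besov_rectangular_multiplier_inclusion} --- the single residual term $(R^{n}\Phi) R^{N}h$ measured in $B^t_{\om_{n+N}}(\ell_2)$. Since $n+N \ge t$, this space is the weighted Bergman space $L^2_a(\om_{n+N-t},\ell_2)$, and at that level no multiplier hypothesis is needed at all: Lemma \ref{GrowthAndDerivative} gives the pointwise bound $\|(R^n\Phi)(z)\|_{\ell_2} \lesssim \|\Phi\|_{A^{-(s-t)}(\ell_2)} (1-|z|^2)^{-(s-t+n)}$, and the weight inequality \eqref{weightsAndGrowth} gives $\om_{n+N-t} \lesssim (1-|z|^2)^{2(s-t+n)}\om_{N-s}$, whence
\begin{equation*}
\|(R^{n}\Phi) R^{N}h\|_{L^2_a(\om_{n+N-t},\ell_2)} \lesssim \|\Phi\|_{A^{-(s-t)}(\ell_2)} \|R^N h\|_{L^2_a(\om_{N-s})} \lesssim \|\Phi\|_{A^{-(s-t)}(\ell_2)} \|h\|_{B^s_\om}.
\end{equation*}
In other words, the growth hypothesis does not enter through any reconstruction of $M_\Phi$ from $M_{R\Phi}$; it enters as a pointwise Cauchy--Schwarz estimate once the target index has been pushed below the Bergman threshold. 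This finite termination mechanism is the key idea absent from your proposal.
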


\begin{proof} Let $n\in \N_0$.  The equivalence of the three conditions and the equivalence of norms will follow from an obvious inductive argument once we show the two inequalities \begin{align}\label{R^n+1}\|R^{n+1}\Phi\|_{\Mult(B^s_\om,B^t_{\om_{n+1}}({\ell_2}))} \lesssim  \|R^{n}\Phi\|_{\Mult(B^s_\om,B^t_{\om_{n}}({\ell_2}))} \\
\label{R^n}\|R^n\Phi\|_{\Mult(B^s_\om,B^t_{\om_n}({\ell_2}))} \lesssim \|\Phi\|_{A^{-(s-t)}(\ell_2)}+\|R^{n+1}\Phi\|_{\Mult(B^s_\om,B^t_{\om_{n+1}}({\ell_2}))} .\end{align}

Let $R^n\Phi \in \Mult(B^s_\om,B^t_{\om_n}({\ell_2}))$. It follows from Corollary \ref{cor:Besov_rectangular_multiplier_inclusion}
that $$\|R^n\Phi\|_{\Mult(B^{s-1}_{\om},B^{t-1}_{\om_{n}}({\ell_2}))}\le \|R^{n}\Phi\|_{\Mult(B^s_\om,B^t_{\om_n}({\ell_2}))}.$$

Since $B^{s-1}_{\om}= B^{s}_{\om_1}$ and $B^{t-1}_{\om_{n}}=B^{t}_{\om_{n+1}}$ with equivalence of norms by Theorem \ref{omxTheorem}, we conclude that  for $h\in B^s_\om$
\begin{align*} &\|(R^{n+1}\Phi)h\|_{B^t_{\om_{n+1}}({\ell_2})} \\
  \le &\|R((R^{n}\Phi)h)\|_{B^t_{\om_{n+1}}({\ell_2})}+ \|(R^{n}\Phi)Rh\|_{B^t_{\om_{n+1}}({\ell_2})}\\
\lesssim &\|(R^{n}\Phi)h\|_{B^t_{\om_{n}}({\ell_2})}+ \|R^{n}\Phi\|_{\Mult(B^s_{\om_1},B^t_{\om_{n+1}}({\ell_2}))} \|Rh\|_{B^s_{\om_{1}}}\\
\lesssim  &2 \|R^{n}\Phi\|_{\Mult(B^s_\om,B^t_{\om_n}({\ell_2}))} \|h\|_{B^s_{\om}}.\end{align*}
Thus (\ref{R^n+1}) holds and $R^{n+1}\Phi \in \Mult(B^s_\om,B^t_{\om_{n+1}}({\ell_2}))$.

Next we assume that $R^{n+1}\Phi \in \Mult(B^s_\om,B^t_{\om_{n+1}}({\ell_2}))$, we write
\begin{equation*}
  M_{n+1}(\Phi) =
  \|R^{n+1}\Phi\|_{\Mult(B^s_\om,B^t_{\om_{n+1}}({\ell_2}))} + \|(R^n \Phi)(0)\|_{\ell_2}
\end{equation*}
and we choose an integer $N\ge s$.
Let $k$ be an integer with $0\le k\le N$. Since $B^s_{\om_k} = B^{s-k}_\om$ and $B^t_{\om_{n+1+k}} = B^{t-k}_{\om_{n+1}}$ with equivalence of norms, Corollary \ref{cor:Besov_rectangular_multiplier_inclusion} applied to the function $R^{n+1}\Phi$
implies that $$\|R^{n+1}\Phi\|_{\Mult(B^s_{\om_k},B^t_{\om_{n+1+k}}({\ell_2}))}\le M_{n+1}(\Phi).$$

Then for all $h\in B^s_\om$ we have
\begin{align*}
  &\|(R^{n}\Phi)h\|_{B^t_{\om_{n}}({\ell_2})} \\ \lesssim &\|(R^n\Phi)(0)h(0)\|_{{\ell_2}}+\|R((R^{n}\Phi)h)\|_{B^t_{\om_{n+1}}({\ell_2})}\\
  \lesssim &M_{n+1}(\Phi) \|h\|_{B^s_\om} +  \|(R^{n+1}\Phi)h\|_{B^t_{\om_{n+1}}({\ell_2})}+ \|(R^{n}\Phi) Rh\|_{B^t_{\om_{n+1}}({\ell_2})}\\
 \lesssim &2M_{n+1}(\Phi)\|h\|_{B^s_\om}+  \|R((R^{n}\Phi) Rh)\|_{B^t_{\om_{n+2}}({\ell_2})}\\
 \lesssim &2M_{n+1}(\Phi)\|h\|_{B^s_\om}+  \|(R^{n+1}\Phi) Rh\|_{B^t_{\om_{n+2}}({\ell_2})} +\|(R^{n}\Phi) R^2h\|_{B^t_{\om_{n+2}}({\ell_2})}\\
 \lesssim &2M_{n+1}(\Phi)\|h\|_{B^s_\om}+  M_{n+1}(\Phi)\| Rh\|_{B^s_{\om_{1}}} +\|(R^{n}\Phi) R^2h\|_{B^t_{\om_{n+2}}({\ell_2})}\\
 \lesssim &3M_{n+1}(\Phi)\|h\|_{B^s_\om}+   \|(R^{n}\Phi) R^2h\|_{B^t_{\om_{n+2}}({\ell_2})}.
 \end{align*}
Thus iteration of this argument shows that
\begin{align*}\|(R^{n}\Phi)h\|_{B^t_{\om_{n}}({\ell_2})} &\lesssim (N+1)M_{n+1}(\Phi)\|h\|_{B^s_\om}+ \|(R^{n}\Phi) R^{N}h\|_{B^t_{\om_{n+N}}({\ell_2})}.\end{align*}
Since $M_{n+1}(\Phi)$ is dominated by the right-hand side of \eqref{R^n}, it remains to estimate the second summand.
Note that as $n+N\ge  t$ we have $B^t_{\om_{n+N}}({\ell_2})=L^2_a(\om_{n+N-t},{\ell_2})$ with equivalence of norms.
The growth hypothesis on $\Phi$ and Lemma \ref{GrowthAndDerivative} imply that
$R^n\Phi \in  A^{-(s-t+n)}({\ell_2})$ with $\|R^n \Phi\|_{A^{-(s-t+n)}} \lesssim \|\Phi\|_{A^{-(s-t)}}$, so using \eqref{weightsAndGrowth}, we see that
\begin{align*}\|(R^{n}\Phi)R^{N}h\|^2_{B^t_{\om_{n+N}}({\ell_2})}&\approx \int_{\Bd} \|(R^{n}\Phi)(z) R^{N}h(z)\|^2_{{\ell_2}}\ \om_{n+N-t}dV \\
  &\lesssim \|R^n \Phi\|^2_{A^{-(s-t+n)}} \int_{\bB_d} | R^N h(z)|^2 \frac{\omega_{n+N-t}}{(1 - |z|^2)^{2 (s - t + n)}} d V \\
  &\lesssim \|R^n \Phi\|^2_{A^{-(s-t+n)}} \int_{\bB_d} | R^N h(z)|^2 \om_{N-s} d V \\
&\lesssim \|\Phi\|^2_{A^{-(s-t)}(\ell_2)} \|R^N h\|^2_{L^2_a(\om_{N-s})}\\
&\lesssim \|\Phi\|^2_{A^{-(s-t)}(\ell_2)} \|h\|^2_{B^s_\om}.\end{align*}
Thus (\ref{R^n}) holds and this concludes the proof.\end{proof}

Since the multipliers of a space into itself are always bounded we obtain an immediate consequence:
\begin{theorem}  \label{MultiplierAndCarleson}   Let $\om$ be a radial weight in $\Bd$, and let $s\in \R$, $N\in \N_0$. Then
$$\Mult(B^s_\om, B^s_\om(\ell_2))=\{\Phi\in  H^\infty(\C,\ell_2): R^N\Phi \in \Mult(B^s_\om, B^{s-N}_{\om}(\ell_2))$$ and
$\|\Phi\|_{\Mult(B^s_\om, B^s_\om(\ell_2))} \approx \|R^N\Phi\|_{\Mult(B^s_\om,B^{s-N}_\om(\ell_2))} + \|\Phi\|_\infty.$
\end{theorem}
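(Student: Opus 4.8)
The plan is to obtain this as a direct specialization of Theorem \ref{Multiplier-growth-derivatives} to the diagonal case $t = s$. When $t = s$ the exponent $s - t$ vanishes, so the growth-space hypothesis $\Phi \in A^{-(s-t)}(\ell_2)$ collapses to $\Phi \in A^0(\ell_2) = H^\infty(\C,\ell_2)$, and correspondingly $\|\Phi\|_{A^{-(s-t)}(\ell_2)} = \|\Phi\|_\infty$. Thus Theorem \ref{Multiplier-growth-derivatives}, applied with this $t$ and with $n = N$, already asserts for every $\Phi \in H^\infty(\C,\ell_2)$ the equivalence of $\Phi \in \Mult(B^s_\om, B^s_\om(\ell_2))$ with $R^N\Phi \in \Mult(B^s_\om, B^s_{\om_N}(\ell_2))$, together with the matching norm estimate.

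The single translation step I would then perform is to rewrite $\om_N$ as an index shift. By Theorem \ref{omxTheorem} one has $B^s_{\om_N} = B^{s-N}_\om$ with equivalence of norms (take the identity $B^t_\om = B^{t+N}_{\om_N}$ at $t = s-N$), and therefore $\Mult(B^s_\om, B^s_{\om_N}(\ell_2)) = \Mult(B^s_\om, B^{s-N}_\om(\ell_2))$ with equivalent multiplier norms. Substituting this into the conclusion of Theorem \ref{Multiplier-growth-derivatives} produces exactly the claimed set equality and the claimed $\approx$.

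The only ingredient not already contained in Theorem \ref{Multiplier-growth-derivatives} is the observation, used implicitly on the left-hand side, that $\Mult(B^s_\om, B^s_\om(\ell_2)) \subseteq H^\infty(\C,\ell_2)$, so that membership in the growth space is automatic and need not be imposed as a hypothesis. This is the ``multipliers of a space into itself are bounded'' remark preceding the statement: for the column operator $M_\Phi$ and any $w \in \Bd$, $x \in \ell_2$, a one-line computation with the reproducing kernel gives $M_\Phi^*(k_w \otimes x) = \overline{\la \Phi(w), x\ra}\, k_w$, whence $|\la \Phi(w), x\ra|\,\|k_w\| \le \|M_\Phi\|\,\|k_w\|\,\|x\|$ and so $\|\Phi(w)\|_{\ell_2} \le \|\Phi\|_{\Mult(B^s_\om, B^s_\om(\ell_2))}$ uniformly in $w$. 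This yields $\|\Phi\|_\infty \le \|\Phi\|_{\Mult(B^s_\om, B^s_\om(\ell_2))}$, which lets the term $\|\Phi\|_\infty$ appearing on the right of the norm equivalence be absorbed on the left, and also makes the $N = 0$ case trivial.

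I do not expect a genuine obstacle here, precisely because the theorem is advertised as an immediate consequence: the content has all been done in Theorem \ref{Multiplier-growth-derivatives}. The only points requiring care are bookkeeping, namely the index identification $B^s_{\om_N} = B^{s-N}_\om$ from Theorem \ref{omxTheorem} and the identity $A^0(\ell_2) = H^\infty(\C,\ell_2)$ recorded after the definition of the growth space, together with the short reproducing-kernel estimate above ensuring that the left-hand side automatically lies in $H^\infty(\C,\ell_2)$.
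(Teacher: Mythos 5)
Your proposal is correct and is essentially identical to the paper's own argument: the paper likewise obtains this theorem as an immediate consequence of Theorem \ref{Multiplier-growth-derivatives} specialized to $t=s$, where the growth hypothesis degenerates to $\Phi\in A^{0}(\ell_2)=H^\infty(\C,\ell_2)$ and is automatic since column multipliers of a space into itself are bounded, with $B^s_{\om_N}=B^{s-N}_\om$ supplied by Theorem \ref{omxTheorem}. Your explicit reproducing-kernel verification of $\|\Phi\|_\infty\le\|\Phi\|_{\Mult(B^s_\om,B^s_\om(\ell_2))}$ is exactly the standard fact the paper leaves implicit in the phrase ``multipliers of a space into itself are always bounded.''
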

Note that if $N\ge s$, then $B^{s-N}_\om=L^2_a(\om_{N-s})$ is a weighted Bergman space and the condition in the Corollary says that the higher order derivatives of multipliers satisfy a Carleson measure condition that is appropriate for the space $B^s_\om$ (see \cite{CasFabOrt}):
There exists a $c>0$ such that $$\int_{\Bd} |f(z)|^2 \|R^N \Phi(z)\|_{\ell_2}^2 \om_{N-s}(z) dV \le c \|f\|^2_{B^s_\om}$$ for all $f \in B^s_\om$.

For the standard weights $\om(z) =(1-|z|^2)^\eta$, $\eta >-1$ the scalar case of this theorem is due to Fabrega and Ortega and \cite{OrtFab}, also see \cite{CasFabOrt}. For general Bekoll\'e-Bonami weights (not necessarily radial) it is in \cite{CasFab_Bekolleweights}. We note that in those contexts the $L^p$-case was treated as well.

We don't know whether  the equivalence of (b) and (c) of Theorem \ref{Multiplier-growth-derivatives} for $n\ge 1$ remains true without the hypothesis that  $\Phi \in A^{-(s-t)}(\ell_2)$. For general Bekoll\'e-Bonami weights that is the case, see \cite{CasFab_Bekolleweights}. We will now see that it also holds for all weakly normal radial weights.

\begin{lemma} \label{weaklyNormalMultiplierGrowth} Let $\om$ be a weakly normal radial weight, and let $s, \alpha \in \R$ with $\alpha \ge 0$. Then
$$\Mult(B^s_\om, B^{s-\alpha}_\om(\ell^2))\subseteq A^{-\alpha}(\ell_2).$$
\end{lemma}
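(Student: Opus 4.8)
The plan is to exploit the action of the adjoint of a column multiplier on reproducing kernels, and then to control the resulting ratio of diagonal kernel values by invoking the kernel growth estimate of Corollary \ref{kernelgrowth}, which is precisely where weak normality of $\om$ is used.

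Write $k^s_w$ and $k^{s-\alpha}_w$ for the reproducing kernels of $B^s_\om$ and $B^{s-\alpha}_\om$, as in Corollary \ref{kernelgrowth}. Suppose $\Phi = (\varphi_1,\varphi_2,\dots) \in \Mult(B^s_\om, B^{s-\alpha}_\om(\ell_2))$, and let $M_\Phi$ denote its bounded column multiplication operator. The first step is the standard computation of the adjoint on kernel functions: for $w \in \Bd$ and $x \in \ell_2$, one has $M_\Phi^*(k^{s-\alpha}_w \otimes x) = \langle x, \Phi(w)\rangle\, k^s_w$, where $\Phi(w) = (\varphi_1(w),\varphi_2(w),\dots) \in \ell_2$. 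Taking norms and using $\|M_\Phi^*\| = \|\Phi\|_{\Mult}$ gives $|\langle x, \Phi(w)\rangle|^2\, k^s_w(w) \le \|\Phi\|^2_{\Mult}\, \|x\|^2\, k^{s-\alpha}_w(w)$; taking the supremum over unit vectors $x \in \ell_2$ then yields the pointwise bound
\begin{equation*}
  \sum_{i=1}^\infty |\varphi_i(w)|^2 = \|\Phi(w)\|_{\ell_2}^2 \le \|\Phi\|_{\Mult}^2\, \frac{k^{s-\alpha}_w(w)}{k^s_w(w)}.
\end{equation*}

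The second step is to bound the ratio of diagonal kernels. Since $\alpha \ge 0$ we have $s-\alpha \le s$, so Corollary \ref{kernelgrowth}, applied with the pair of indices $s-\alpha \le s$, provides a constant $c > 0$, depending on $\om, s, \alpha$, such that $k^{s-\alpha}_w(w) \le c\,(1-|w|^2)^{-2\alpha}\,k^s_w(w)$ for all $w \in \Bd$. Substituting this into the pointwise bound gives $(1-|w|^2)^{2\alpha} \sum_i |\varphi_i(w)|^2 \le c\,\|\Phi\|^2_{\Mult}$ uniformly in $w$, which is exactly the statement that $\Phi \in A^{-\alpha}(\ell_2)$, with $\|\Phi\|_{A^{-\alpha}(\ell_2)} \le \sqrt{c}\,\|\Phi\|_{\Mult}$.

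The adjoint-on-kernels computation and the supremum over $x$ are routine reproducing-kernel manipulations, so the only real content lies in the kernel ratio estimate. The main obstacle is therefore entirely absorbed into Corollary \ref{kernelgrowth}: it is there that the hypothesis that $\om$ is weakly normal is essential, since for a general radial weight the diagonal kernel of $B^{s-\alpha}_\om$ need not grow like $(1-|w|^2)^{-2\alpha}$ times that of $B^s_\om$. Once that estimate is in hand, the remainder of the argument is immediate.
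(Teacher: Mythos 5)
Your proof is correct and takes essentially the same route as the paper: both arguments reduce the lemma to the pointwise estimate $\|\Phi(z)\|_{\ell_2}^2 \le \|\Phi\|^2_{\Mult}\, k^{s-\alpha}_z(z)/k^s_z(z)$ via a routine reproducing-kernel computation (the paper does it by Cauchy--Schwarz on the quantities $\langle \varphi_n k^s_z, k^{s-\alpha}_z\rangle$, you by letting $M_\Phi^*$ act on kernel functions --- the same estimate in adjoint form), and then both invoke Corollary \ref{kernelgrowth} to bound the kernel ratio by $c\,(1-|z|^2)^{-2\alpha}$. The differences are purely presentational.
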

\begin{proof} Let $\Phi=\{\varphi_1,...\}\in \Mult(B^s_\om, B^{s-\alpha}_\om(\ell^2))$. Then for each $z\in \Bd$ we have
\begin{align*} \sum_{n\ge 1} |\varphi_n(z)|^2 &=\sum_{n\ge 1} \frac{|\la \varphi_nk^s_z, k_z^{s-\alpha}\ra|^2}{|k^s_z(z)|^2}\\
&\le \frac{\| \Phi\|^2_{\Mult(B^s_\om, B^{s-\alpha}_\om(\ell^2))}  \|k_z^s\|^2 \| k_z^{s-\alpha}\|^2}{|k^s_z(z)|^2}\\
&= \| \Phi\|^2_{\Mult(B^s_\om, B^{s-\alpha}_\om(\ell^2))} \frac{\| k_z^{s-\alpha}\|^2}{\|k^s_z\|^2}\\
&\le c\| \Phi\|^2_{\Mult(B^s_\om, B^{s-\alpha}_\om(\ell^2))} (1-|z|^2)^{-2\alpha} \ \text{ by Corollary \ref{kernelgrowth}.}
\end{align*}
\end{proof}
\begin{corollary} Let $\om$ be a weakly normal radial weight, and let $s, t \in \R$ with $t<s$.
Then the following are equivalent:

(a) $\Phi \in \Mult(B^s_\om,B^t_\om({\ell_2}))$,

(b) there exists $n\in \N_0$ such that $R^n\Phi \in \Mult(B^s_\om,B^t_{\om_n}({\ell_2}))$,

(c) for all $n\in \N_0$ we have $R^n\Phi \in \Mult(B^s_\om,B^t_{\om_n}({\ell_2}))$.
\end{corollary}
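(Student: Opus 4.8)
The plan is to reduce the statement to Theorem~\ref{Multiplier-growth-derivatives}, whose conclusion is exactly the asserted equivalence but under the extra hypothesis $\Phi \in A^{-(s-t)}(\ell_2)$. Thus it suffices to show that, when $\om$ is weakly normal and $\Phi = (\varphi_1,\varphi_2,\ldots)$ is an $\ell_2$-valued analytic function on $\Bd$ (so that in particular $\Phi(0) \in \ell_2$), each of the conditions (a), (b), (c) already forces $\Phi \in A^{-(s-t)}(\ell_2)$. Once this is established, Theorem~\ref{Multiplier-growth-derivatives} applies directly and yields (a)$\Leftrightarrow$(b)$\Leftrightarrow$(c), since its three conditions coincide verbatim with ours.

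Since (a) is the $n=0$ instance of (b), and (c) trivially implies (b), I would only need to verify that (b) implies the growth condition. So assume $R^n \Phi \in \Mult(B^s_\om, B^t_{\om_n}(\ell_2))$ for some $n \in \N_0$. By Theorem~\ref{omxTheorem} one has $B^t_{\om_n} = B^{t-n}_\om$ with equivalent norms, so $R^n \Phi \in \Mult(B^s_\om, B^{t-n}_\om(\ell_2))$. Writing the gap in indices as $s - (t-n) = (s-t) + n \ge 0$ and invoking Lemma~\ref{weaklyNormalMultiplierGrowth} (legitimate since $\om$ is weakly normal and the order is nonnegative), I obtain $R^n \Phi \in A^{-(s-t+n)}(\ell_2)$.

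The remaining step is to pass from the derivative back to $\Phi$ itself. Here I would apply Lemma~\ref{GrowthAndDerivative} with $\gamma = s - t$: the hypotheses $R^n \Phi \in A^{-(\gamma + n)}(\ell_2)$ and $\Phi(0) \in \ell_2$ give $\Phi \in A^{-(s-t)}(\ell_2)$. This is precisely the point where the strict inequality $t < s$ is needed, since Lemma~\ref{GrowthAndDerivative} requires $\gamma > 0$. In the case $n = 0$ the conclusion is immediate from Lemma~\ref{weaklyNormalMultiplierGrowth} alone, with no appeal to Lemma~\ref{GrowthAndDerivative}. This verifies the growth hypothesis in every case and completes the reduction.

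I expect the one genuinely delicate issue to be the recovery of the constant term $\Phi(0)$. The derivative $R^n\Phi$ annihilates $\Phi(0)$, so the decay of $R^n \Phi$ cannot by itself control $\Phi(0)$; indeed, adding to a multiplier a constant sequence lying outside $\ell_2$ would preserve condition (b) for $n \ge 1$ while destroying (a). The equivalence therefore genuinely depends on reading $\Phi$ as an $\ell_2$-valued function, which is what supplies $\Phi(0) \in \ell_2$ and lets Lemma~\ref{GrowthAndDerivative} close the argument; together with the requirement $\gamma = s - t > 0$, these are the two features that make the hypotheses of the corollary exactly the right ones.
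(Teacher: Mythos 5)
Your proof is correct and follows essentially the same route as the paper's: the paper's (one-line) proof likewise reduces to Theorem~\ref{Multiplier-growth-derivatives} by observing that each of (a), (b), (c) forces the growth hypothesis $\Phi \in A^{-(s-t)}(\ell_2)$ via Lemmas~\ref{weaklyNormalMultiplierGrowth} and~\ref{GrowthAndDerivative}. You simply supply the details the paper leaves implicit (the identification $B^t_{\om_n}=B^{t-n}_\om$ from Theorem~\ref{omxTheorem}, the reduction of all cases to (b), and the need for $\Phi(0)\in\ell_2$ when recovering $\Phi$ from $R^n\Phi$), and your remark on that last point is a fair observation about a standing assumption the paper does not spell out.
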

\begin{proof} This follows from Theorem \ref{Multiplier-growth-derivatives}, because by Lemmas \ref{weaklyNormalMultiplierGrowth} and \ref{GrowthAndDerivative} each of the cases (a), (b), or (c) automatically implies the  required growth hypothesis. \end{proof}

In particular, applying this to $R\Phi$ and $t=s-1$, we conclude that for weakly normal radial weights we have $R\Phi\in \Mult(B^s_\om, B^{s-1}_\om(\ell_2))$ if and only if there is $n\in \N$ such that $R^n\Phi\in \Mult(B^s_\om, B^{s-n}_\om(\ell_2))$.

\bibliography{Biblio_WeakProduct_Quotients}

 \end{document}